\documentclass[reqno]{amsart}
\usepackage{a4wide}
\usepackage{stmaryrd}
\usepackage{graphicx}
\usepackage{amssymb}

\usepackage{mathtools}
\usepackage{subfigure}

\numberwithin{equation}{section}

\usepackage{hyperref}

\usepackage{tikz-cd}
\usetikzlibrary{decorations.pathreplacing,decorations.markings}

\tikzset{
  on each segment/.style={
    decorate,
    decoration={
      show path construction,
      moveto code={},
      lineto code={
        \path [#1]
        (\tikzinputsegmentfirst) -- (\tikzinputsegmentlast);
      },
      curveto code={
        \path [#1] (\tikzinputsegmentfirst)
        .. controls
        (\tikzinputsegmentsupporta) and (\tikzinputsegmentsupportb)
        ..
        (\tikzinputsegmentlast);
      },
      closepath code={
        \path [#1]
        (\tikzinputsegmentfirst) -- (\tikzinputsegmentlast);
      },
    },
  },
  mid arrow/.style={postaction={decorate,decoration={
        markings,
        mark=at position .5 with {\arrow[#1]{stealth}}
      }}},
}

\theoremstyle{plain}
\newtheorem{theorem}{Theorem}[section]
\newtheorem{lemma}[theorem]{Lemma}
\newtheorem{corollary}[theorem]{Corollary}
\newtheorem{proposition}[theorem]{Proposition}

\theoremstyle{definition}
\newtheorem{definition}[theorem]{Definition}
\newtheorem{example}[theorem]{Example}
\newtheorem{assumption}[theorem]{Assumption}

\theoremstyle{remark}
\newtheorem{remark}[theorem]{Remark}


\DeclareMathOperator{\Aut}{Aut}
\DeclareMathOperator{\Dehn}{Dehn}
\DeclareMathOperator{\Hom}{Hom}
\DeclareMathOperator{\Inn}{Inn}
\DeclareMathOperator{\Isom}{Isom}
\DeclareMathOperator{\Iso}{Twist}

\DeclareMathOperator{\Out}{Out}
\DeclareMathOperator{\sgn}{sgn}

\usepackage[mathscr]{euscript}
\usepackage{ragged2e}

\renewcommand{\tilde}{\widetilde}
\renewcommand{\bar}{\overline}

\newcommand\Ch{\mathcal{C}\mathsf{h}}

\newcommand\even{{\mathsf{even}}}
\newcommand\out{{\mathsf{out}}}
\newcommand\odd{{\mathsf{odd}}}
\newcommand\CLTTF{\textsf{CLTTF} }

\title{The automorphism groups of Artin groups of edge-separated CLTTF graphs}
\author{Byung Hee An}
\email{anbyhee@knu.ac.kr}
\address{Department of Mathematics Education, Kyungpook National University, Republic of Korea}

\author{Youngjin Cho}
\email{y\_cho@knu.ac.kr}
\address{Department of Mathematics Education, Kyungpook National University, Republic of Korea}

\keywords{CLTTF Artin group, Automorphism group}
\subjclass[2010]{Primary: 20F36, 20E36. Secondary: 20F65.}

\begin{document}
\begin{abstract}
We provide an explicit presentation of the automorphism group of an edge-separated CLTTF Artin group.
\end{abstract}

\maketitle

\tableofcontents

\section{Introduction}

Let $\Gamma$ be a simple graph such that every edge $e$ carries an integer label $m(e)\ge2$. An {\em Artin group} $A_\Gamma$ with a defining graph $\Gamma$ is generated by vertices of $\Gamma$ and related by
$$\underbrace{sts\cdots}_{m(e)}=\underbrace{tst\cdots}_{m(e)}$$
for each edge $e$ joining $s$ and $t$. A set of generators is called that of {\em Artin generators} if a defining graph can be recovered by using them as vertices. For example, the 4-strand braid group is an Artin group defined by the triangle with edge labels 2, 3, 3. If all edge labels are 2, $A_\Gamma$ is called a {\em right-angled Artin group}. An Artin group is {\em rigid} if it has a unique defining graph, or equivalently, if a set of Artin generators is sent to any other set of Artin generators by an automorphism of the Artin group. Right-angled Artin groups \cite{Droms87} and Artin groups of finite type \cite{BMcMN02} are known to be rigid. In general, Artin groups need not be rigid.

From now on, we fix a finite set $V$ of vertices and assume that a graph $\Gamma$ is edge-labeled whose set of edges is denoted by $E(\Gamma)$. 
Suppose that a graph $\Gamma$ has two subgraphs $\Gamma_1$ and $\Gamma_2$ with intersection $\Gamma_0$ such that $A_{\Gamma_0}$ is an Artin subgroup of finite type.
In \cite{Cr05}, the author proposes a typical way of obtaining a new defining graph from $\Gamma$ under this circumstance. Recall that there is a unique element $\lambda$ in $A_{\Gamma_0}$, which is the longest element in the associated Coxeter group, such that the conjugation by $\lambda$ permutes elements of $\Gamma_2$.
A new set $S'$ obtained from $V$ by replacing elements of $V(\Gamma_2)$ by their conjugates by $\lambda$ generates $A_\Gamma$ and then $S'$ determines a new defining graph $\Delta$ that is called an {\em edge-twist} of $\Gamma$ with respec to the triple $(\Gamma_1, \Gamma_0, \Gamma_2)$. In fact $\Delta$ is obtained from $\Gamma$ by replacing each edges joining a vertex $v$ in $\Gamma_0$ and a vertex $w$ in $\Gamma_2$ by a new edge joining $v$ and $\lambda w\lambda^{-1}$. We may identify $V(\Delta)$ with $V$ since only edges are altered. 
There is an obvious isomorphism $:A_\Gamma\to A_{\Delta}$ called a {\em twist isomorphism}, that sends each $v\in V(\Gamma_2)$ to $\lambda v\lambda^{-1}$ and fixes other generators. It is a conjecture that two defining graphs of an Artin group are {\em twist-equivalent}, that is, related via a series of twists.

There have been extensive researches on automorphism groups of free abelian groups, free groups, and more generally, right-angled Artin groups. In particular isometric actions on appropriate spaces by outer automorphisms are studied to understand geometric structures of groups of outer automorphisms. There are also many complete results on automorphism groups of some Artin groups of finite type. Nielsen automorphisms or Whitehead automorphisms on free groups can be adapted to form a set of generators of automorphism groups when they are appropriate. They are usually classified as one of the following types: permutations of generators, inversions, transvections, and partial conjugations. For right-angled Artin groups, peak reduction arguments can be employed to obtain a complete set of relations among generators \cite{Day09,Day14}.

On the other hand, there are very few results on automorphism groups of non-rigid Artin groups. John Crisp gave the first noticeable result in \cite{Cr05}. He considered \CLTTF Artin groups defined by graphs that are Connected, has edge labels $\ge3$ (Large Type), and is Triagle Free. \CLTTF Artin groups form a somewhat manageable family of non-rigid Artin groups in studying their automorphism groups. In fact there are no transvections and twists occur only along edges with odd labels. Furthermore two defining graphs of a \CLTTF group are twist-equivalent \cite{Cr05}. He showed that the isomorphism groupoid of a \CLTTF Artin group is generated by graph automorphisms, inversions, partial conjugations, and twist isomorphisms. However given a  \CLTTF Artin group, it is not feasible to obtain a presentation of its automorphism group by using the groupoid since its automorphism can be given by any circuit including loops in the graph of groupoid.

In this paper, we provide concrete and explicit description and group presentations of the (outer) automorphism group, whose generators are vertices, edge-twists, and certain graph isomorphisms.

\begin{theorem}[Theorem~\ref{theorem:main theorem}]
Let $\Gamma$ be a \CLTTF graph without separating vertices.
Then the automorphism group $\Aut(A_\Gamma)$ and outer automorphism group admit the following finite group presentations:
\begin{align*}
\Aut(A_\Gamma)&\cong
\left\langle V, S(\Gamma), \iota ~\middle|~ R_0(\Gamma), R_1(\Gamma), R_2(\Gamma), R_3(\Gamma), R_4(\Gamma), \tilde R_\Phi(\Gamma)
\right\rangle,\\
\Out(A_\Gamma)&\cong\left\langle
S(\Gamma), \iota ~\middle|~
R_1(\Gamma), R_2(\Gamma), R_3(\Gamma), R_4(\Gamma), R_\Phi
\right\rangle,
\end{align*}
where the sets $S(\Gamma)$, $R_i(\Gamma)$ for $0\le i\le 4$, $R_\Phi(\Gamma)$ and $\tilde R_\Phi(\Gamma)$ are given in Section~\ref{section:presentation}.
\end{theorem}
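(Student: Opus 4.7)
The plan is to realize $\Aut(A_\Gamma)$ as the vertex group at $\Gamma$ of the isomorphism groupoid of $A_\Gamma$, which by Crisp's result is generated by graph isomorphisms, inversions, partial conjugations, and edge-twists. Since $\Gamma$ has no separating vertices, I expect that partial conjugations can be absorbed into the ``global'' inner automorphisms, that is, into the image of $V$; this is the reduction that makes a finite presentation possible. The first step is therefore to single out a finite subset $S(\Gamma)$ consisting of graph automorphisms of $\Gamma$ together with ``loop twists'' (finite compositions of elementary edge-twists based at $\Gamma$ in the twist-graph), and to verify that $V\cup S(\Gamma)\cup\{\iota\}$ generates $\Aut(A_\Gamma)$. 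This verification uses twist-equivalence of \CLTTF defining graphs to ensure that every automorphism of $A_\Gamma$ can be routed through twists back to $\Gamma$.

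The second step is to collect the relations. I expect $R_0(\Gamma)$ to be the Artin relations among the generators in $V$, which pass through the map $V \hookrightarrow \Aut(A_\Gamma)$ via $\Inn$. The relations $R_1(\Gamma)$ should record the action of elements of $S(\Gamma)$ and $\iota$ on the inner generators (giving the semidirect-product structure), while $R_2(\Gamma)$, $R_3(\Gamma)$, $R_4(\Gamma)$ will encode interactions among twists, graph automorphisms, and the inversion $\iota$: commutation of twists acting on disjoint edges, braid-type relations at shared vertices, and equivariance of twists under graph automorphisms. The final family $R_\Phi(\Gamma)$ corresponds to ``fundamental'' closed loops in the twist-graph: each such loop gives a composition that is trivial in $\Out(A_\Gamma)$. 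In the $\Aut$-presentation, the lifted relations $\tilde R_\Phi(\Gamma)$ specify, for each fundamental loop, the explicit inner automorphism (a word in $V$) by which the composition differs from the identity; passing to $\Out$ then kills $V$ and collapses $\tilde R_\Phi(\Gamma)$ to $R_\Phi(\Gamma)$.

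The main obstacle is the completeness of the relations, i.e.\ showing that no further relations are needed. The strategy I would pursue is to build a 2-complex whose $1$-skeleton is the twist-graph of $\Gamma$ and whose $2$-cells correspond to the fundamental loops listed in $R_\Phi(\Gamma)$, and to prove that this complex is simply connected. Equivalently, by the standard presentation theorem for vertex groups of a connected groupoid with a chosen spanning tree, one needs to verify that every circuit in the twist-graph based at $\Gamma$ decomposes as a product of conjugates of the chosen fundamental loops together with the local relations $R_1$--$R_4$ at the visited vertices. I would carry out this verification by induction along the rigid-piece decomposition of $\Gamma$ induced by its separating edges, so that the edge-separated hypothesis provides a graph-of-groups structure amenable to an inductive argument. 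The two places where care is required are, first, keeping track of the exact inner automorphism produced by each loop (so that $\tilde R_\Phi(\Gamma)$ is correct on the nose, not just modulo $\Inn$), and second, ensuring that graph automorphisms of $\Gamma$ interact consistently with the twist loops, which forces $R_3(\Gamma)$ and $R_4(\Gamma)$ to be stated with the precise conjugation data.
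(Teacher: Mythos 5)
Your high-level intuition --- that $\Aut(A_\Gamma)$ should be understood via the isomorphism groupoid, with generators coming from a spanning tree of the twist-graph and relations from circuits --- is in the right spirit, but the proposal has genuine gaps and misidentifies the roles of the relation families in a way that would not resolve into the paper's presentation.

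First, the structural identification is off. You say $R_1(\Gamma)$ records the action of $S(\Gamma)$ and $\iota$ on the inner generators; in the paper that data lives in $R_0(\Gamma)$ (together with the Artin relations), while $R_1(\Gamma)$ is the commutativity of edge-twists. More seriously, you propose ``braid-type relations at shared vertices'' for $R_2, R_3, R_4$, but the actual content of $R_3(\Gamma)$ is a cocycle relation $\tilde\alpha\tilde\beta = (\alpha,\beta)^2\widetilde{\alpha\beta}$ governed by the twisted intersection product on the \emph{finite} group $\Iso(\Gamma)$. Your proposal does not identify the key algebraic structure that makes the presentation finite: the short exact sequence
\[
1\to \Dehn_{\mathscr{G}}(\Gamma) \to \Aut_{\mathscr{G}}(\Gamma) \to \Iso(\Gamma) \to 1,
\]
where the kernel is the free abelian group of even edge-twists and the quotient $\Iso(\Gamma)$ is a finite subgroup of $\mathfrak{S}_V$. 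The entire presentation is extracted by applying a standard extension-presentation lemma to this sequence, and then to $1\to\Inn\to\Aut\to\Out\to1$; without this structure, the vague ``interactions among twists, graph automorphisms, and $\iota$'' has no finite handle.

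Second, your description of $R_\Phi$ is wrong in a way that matters. You expect one $R_\Phi$ relation per ``fundamental closed loop in the twist-graph,'' with multiple such loops in general. In the paper $R_\Phi$ is either empty or a \emph{single} relation, determined entirely by the central vertex $*_\Gamma$ of the chunk tree: the special automorphism $\Phi$ generates $\Inn(A_\Gamma)\cap\Aut_{\mathscr{A}}(A_\Gamma) = Z_\Gamma$, which is cyclic. This collapse from ``many loops'' to ``at most one relation'' is exactly what the short exact sequence and the normal-form result (Proposition~\ref{proposition:normal form for graphs} plus Theorem~\ref{theorem:normal form in G}) encode, and your proposal has no replacement for it. Also, the claim that partial conjugations ``can be absorbed into global inner automorphisms'' is misleading: the $\varepsilon$ and $\varepsilon^2$ remain genuine non-inner generators; only the special composition $\Phi$ becomes inner.

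Third, the completeness argument is gestured at but not substantiated. You propose to prove simple connectivity of a 2-complex over the twist-graph by induction along the edge-separated decomposition, but you give no mechanism for showing that no circuit produces a ``new'' relation. The paper's corresponding work is Proposition~\ref{proposition:graph isomorphism and edge-twists in A} --- faithfulness of the functor $\mathscr{F}\colon\mathscr{G}\to\mathscr{A}$ --- proved by induction on the diameter of the chunk tree together with non-conjugacy of dihedral parabolics and a normalizer computation in $A_\Gamma$. This group-theoretic input (not merely the combinatorics of the twist-graph) is what rules out hidden relations, and your proposal does not indicate where such input would enter.
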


The rest of the paper is organized as follows.
In Section~\ref{section:CLTTF graphs}, we review basics on \CLTTF graphs including chunk trees, graph isomorphisms, edge-twists and their pull-backs and push-forwards. We also define the subgroup $\Iso(\Gamma)$ of the permutation group $\mathfrak{S}_V$ consisting of graph isomorphisms whose source and target are edge-twist equivalent. We further define the category $\mathscr{G}$ of \CLTTF graphs whose morphisms are isomorphisms freely generated by graph isomorphisms and edge-twists.

In Section~\ref{section:CLTTF Artin groups}, we review \CLTTF Artin groups and their isomorphisms, and define the category $\mathscr{A}$ of \CLTTF Artin groups whose morphisms are generated by graph isomorphisms and partial conjugations. We prove the equivalence between categories $\mathscr{G}$ and $\mathscr{A}$, and the relationship between our and Crisp's categories of \CLTTF Artin groups is briefly explained.

In Section~\ref{section:automorphism groups}, we introduce the \emph{twisted intersection product} between graph isomorphisms in $\Iso(\Gamma)$ and finally we provide the group presentations for both $\Aut(A_\Gamma)$ and $\Out(A_\Gamma)$.

\subsection*{Acknowledgements}
The first author was supported by Kyungpook National University Research Fund, 2020.

\section{\CLTTF Graphs}\label{section:CLTTF graphs}
Throughout this paper, we fix a finite set $V$ with $\#(V)\ge 3$ and assume that every graph has the set $V$ of vertices unless mentioned otherwise.

\subsection{\CLTTF graphs and chunk trees}
Let $\Gamma=(V, E, m)$ be a simple graph with an edge-label $m:E \rightarrow \mathbb{Z}_{\geq 2}$. We call $\Gamma$ \CLTTF if it satisfies the following:
\begin{itemize}
\item it is \textsf{C}onnected,
\item it is of \textsf{L}arge \textsf{T}ype, i.e., $m(e)\ge 3$ for every edge $e\in E$, 
\item it is \textsf{T}riangle-\textsf{F}ree, i.e., there are no full subgraphs of three vertices which look like a triangle.
\end{itemize}

A \emph{decomposition} of $\Gamma$ along a subgraph $\Gamma_0$ is a triple $(\Gamma_1, \Gamma_0, \Gamma_2)$ such that $\Gamma_1$ and $\Gamma_2$ are full subgraphs different from $\Gamma_0$ whose union and intersection are $\Gamma$ and $\Gamma_0$, respectively,
\begin{align*}
\Gamma &= \Gamma_1 \cup \Gamma_2,& 
\Gamma_0 &= \Gamma_1 \cap \Gamma_2.
\end{align*}
We call a vertex $v$ or an edge $e$ \emph{seprating} if there exists a decomposition with $\Gamma_0=v$ or $e$ and we say that a \CLTTF graph $\Gamma$ is \emph{edge-separated} if there are no separating vertices.

\begin{example}[edge-separated \CLTTF graphs]
Let us consider the graph $\Gamma$ below
\[
\Gamma=\begin{tikzpicture}[baseline=-.5ex]
\draw[fill] (0, -0.5) circle (2pt) node[below=2ex, left=-.5ex] {$a$};
\draw[fill] (1, -0.5) circle (2pt) node[right] {$d$};
\draw[fill] (0, 0.5) circle (2pt) node[above=2ex, left] {$i$};
\draw[fill] (1, 0.5) circle (2pt) node[right] {$e$};
\draw[fill] (0, -1.5) circle (2pt) node[below left] {$b$};
\draw[fill] (1, -1.5) circle (2pt) node[below right] {$c$};
\draw[fill](1, 0.5) ++(72:1) circle (2pt) node[right] {$f$} ++(144:1) circle(2pt) node[above] {$g$} ++(216:1) circle(2pt) node[left] {$h$};
\draw[fill] (-1.5,1.5) circle (2pt) node[above left] {$j$};
\draw[fill] (-1.5,0.5) circle (2pt) node[left] {$k$};
\draw[fill] (-1.5,-0.5) circle (2pt) node[left] {$\ell$};
\draw[fill] (-1.5,-1.5) circle (2pt) node[below left] {$m$};
\draw (0.5,0.5) node[above=-.5ex] {$e_1$} (0,0) node[right=-1ex] {$e_2$} (0.5,-0.5) node[below=-.5ex] {$e_3$};
\draw[color=black, fill=red, fill opacity=0.2](0, -1.5) rectangle node[opacity=1, below] {$C_4$} (1,-0.5);
\draw[color=black, fill=yellow, fill opacity=0.2](1, 0.5) -- ++(72:1) -- ++(144:1) -- ++(216:1) -- ++(288:1);
\draw (0.5, 1) node[opacity=1, above] {$C_1$};
\draw[color=black, fill=white, fill opacity=0.2](0, 0.5) -- ++(-1.5, 1) -- ++(-90:1) node[opacity=1, above right] {$C_2$} -- ++(1.5, -1);
\draw[color=black, fill=black, fill opacity=0.2](0, -0.5) -- ++(-1.5, -1) -- ++(90:1) node[opacity=1, below right] {$C_3$} -- ++(1.5, 1);
\draw[color=black, fill=blue, fill opacity=0.2](0, -0.5) rectangle node[opacity=1, right=-1ex] {$C_0$} (1,0.5);
\end{tikzpicture}
\]
whose edges are all labeled as $3$.
Then this is an edge-separated \CLTTF graph.
\end{example}

For simplicity, we will assume the following:
\begin{assumption}\label{assumption:edge-separated}
Every \CLTTF graph is edge-separated.
\end{assumption}

\begin{definition}[Chunk]
Let $C$ be a connected full subgraph of $\Gamma$. We shall say that $C$ is \emph{indecomposable} if, for every decomposition $(\Gamma_1,e,\Gamma_2)$ 
of $\Gamma$ over a separating edge $e$, either $C \subset \Gamma_1$ or $C \subset \Gamma_2$.

By a \emph{chunk} of $\Gamma$ we mean a maximal indecomposable (connected and full) subgraph of $\Gamma$.
\end{definition}

Notice that a chunk $C$ of a \CLTTF graph $\Gamma$ is again a \CLTTF graph with at least 3 vertices.
Moreover, any two chunks of $\Gamma$ intersect, if at all, along a single separating edge.
Hence we can constuct a new graph from $\Gamma$ consisting of chunks and separating edges as follows:

\begin{definition}[Chunk graph]
Let $\Gamma$ be an edge-separated \CLTTF graph.
The \emph{chunk graph} $\Ch_\Gamma$ is a graph constructed as follows:
\begin{itemize}
\item The set $V(\Ch_\Gamma)$ of vertices consists of chunks and separating edges
\[
V(\Ch_\Gamma)=\{C\subset \Gamma\mid C\text{ is a chunk}\}\cup
\{e\subset \Gamma\mid e\text{ is a separating edge}\}.
\]
\item The set $E(\Ch_\Gamma)$ of edges consists of the pairs $(e,C)$ of a separating edge $e$ and a chunk $C$ whenever $e\subset C$
\[
E(\Ch_\Gamma)=\{
(e,C)\mid e\subset C, e\text{ is a separating edge}, C\text{ is a chunk}
\}.
\]
\end{itemize}
\end{definition}

\begin{example}\label{example:chunk graph}
An example of a chunk tree is as follows:
\[
\begin{tikzcd}[row sep=0pc, column sep=3pc]
\begin{tikzpicture}[baseline=-.5ex]
\draw[fill] (0, -0.5) circle (2pt) node[below=2ex, left=-.5ex] {$a$};
\draw[fill] (1, -0.5) circle (2pt) node[right] {$d$};
\draw[fill] (0, 0.5) circle (2pt) node[above=2ex, left] {$i$};
\draw[fill] (1, 0.5) circle (2pt) node[right] {$e$};
\draw[fill] (0, -1.5) circle (2pt) node[below left] {$b$};
\draw[fill] (1, -1.5) circle (2pt) node[below right] {$c$};
\draw[fill](1, 0.5) ++(72:1) circle (2pt) node[right] {$f$} ++(144:1) circle(2pt) node[above] {$g$} ++(216:1) circle(2pt) node[left] {$h$};
\draw[fill] (-1.5,1.5) circle (2pt) node[above left] {$j$};
\draw[fill] (-1.5,0.5) circle (2pt) node[left] {$k$};
\draw[fill] (-1.5,-0.5) circle (2pt) node[left] {$\ell$};
\draw[fill] (-1.5,-1.5) circle (2pt) node[below left] {$m$};
\draw (0.5,0.5) node[above=-.5ex] {$e_1$} (0,0) node[right=-1ex] {$e_2$} (0.5,-0.5) node[below=-.5ex] {$e_3$};
\draw[color=black, fill=red, fill opacity=0.2](0, -1.5) rectangle node[opacity=1, below] {$C_4$} (1,-0.5);
\draw[color=black, fill=yellow, fill opacity=0.2](1, 0.5) -- ++(72:1) -- ++(144:1) -- ++(216:1) -- ++(288:1);
\draw (0.5, 1) node[opacity=1, above] {$C_1$};
\draw[color=black, fill=white, fill opacity=0.2](0, 0.5) -- ++(-1.5, 1) -- ++(-90:1) node[opacity=1, above right] {$C_2$} -- ++(1.5, -1);
\draw[color=black, fill=black, fill opacity=0.2](0, -0.5) -- ++(-1.5, -1) -- ++(90:1) node[opacity=1, below right] {$C_3$} -- ++(1.5, 1);
\draw[color=black, fill=blue, fill opacity=0.2](0, -0.5) rectangle node[opacity=1, right=-1ex] {$C_0$} (1,0.5);
\end{tikzpicture}
\arrow[r,"\Ch"]&
\begin{tikzpicture}[baseline=-.5ex]
\foreach \i in {-2,-1,0,1,2} {
\draw[fill] (1,\i) circle (2pt); 
}
\draw[fill] (0, 0) circle (2pt) (240:1) circle (2pt) (120:1) circle (2pt);
\draw[postaction={on each segment={mid arrow}}] (0,0) node[left] {$e_2$} -- node[pos=0.75, right] {$\varepsilon_6$} (240:1) node[below left] {$C_3$};
\draw[postaction={on each segment={mid arrow}}] (0,0) -- node[midway, below] {$\varepsilon_2$} (1,0);
\draw[postaction={on each segment={mid arrow}}] (0,0) -- node[pos=0.75, right] {$\varepsilon_5$} (120:1) node[above left] {$C_2$};
\draw[postaction={on each segment={mid arrow}}] (1,1) -- node[midway, right] {$\varepsilon_4$} (1,2) node[above] {$C_1$};
\draw[postaction={on each segment={mid arrow}}] (1,1) node[right] {$e_1$} -- node[midway, right] {$\varepsilon_1$} (1,0);
\draw[postaction={on each segment={mid arrow}}] (1,-1) -- node[midway, right] {$\varepsilon_3$} (1,0) node[right] {$C_0$};
\draw[postaction={on each segment={mid arrow}}] (1,-1) node[right] {$e_3$} -- node[midway, right] {$\varepsilon_7$} (1,-2) node[below] {$C_4$};
\end{tikzpicture}\\
\Gamma & \Ch_\Gamma
\end{tikzcd}
\]
\end{example}

As seen in this example, one can easily observe the following: the chunk graph $\Ch_\Gamma$ is
\begin{itemize}
\item simple and connected,
\item bipartite with respect to being a separating edge and being a chunk, and
\item a tree whose leaves are chunks.
\end{itemize}

The first two observations are obviously true for any edge-separated \CLTTF graphs by the construction of the chunk graph. 
Indeed, by the definition of the chunk graph, $\Ch_\Gamma$ has no multiple edges, loops and edges which is connecting two chunks or two separating edges, respectively, and the connectivity of $\Gamma$ implies the connectivity of $\Ch_\Gamma$.
Moreover, since each separating edge of $\Gamma$ should be contained in at least two chunks of $\Gamma$, all univalent vertices of $\Ch_\Gamma$ are chunks.

\begin{theorem}
The chunk graph $\Ch_\Gamma$ is a tree whose leaves are chunks of $\Gamma$, and we will call $\Ch_\Gamma$ the \emph{chunk tree} for $\Gamma$.
\end{theorem}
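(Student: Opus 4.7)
The plan is to establish the one remaining property, namely acyclicity (connectivity, the bipartite structure, and the fact that leaves are chunks have already been observed). I will argue by contradiction: any cycle in $\Ch_\Gamma$ must give rise to a connected indecomposable full subgraph of $\Gamma$ strictly containing one of the chunks appearing in the cycle, contradicting the maximality built into the definition of a chunk.

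Suppose $\Ch_\Gamma$ has a cycle. By bipartiteness it has even length and can be written $C_1 - e_1 - C_2 - e_2 - \cdots - C_n - e_n - C_1$ with $n \ge 3$ (the length-$4$ case is excluded because two distinct chunks meet in at most one separating edge). Let $\bar U$ be the full subgraph of $\Gamma$ on $\bigcup_{j=1}^n V(C_j)$. Then $\bar U$ is connected, since each $C_j$ is connected and consecutive $C_j, C_{j+1}$ share the edge $e_j$; and $\bar U \supsetneq C_1$, since $C_2 \neq C_1$ satisfies $V(C_1) \cap V(C_2) \subseteq V(e_1)$, so $|V(C_2) \setminus V(C_1)| \ge |V(C_2)| - 2 \ge 1$ by the CLTTF hypothesis that every chunk has at least $3$ vertices.

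The core of the argument is the indecomposability of $\bar U$. Fix any separating edge $e$ of $\Gamma$ with decomposition $\Gamma = \Gamma_1' \cup_e \Gamma_2'$. Since each $C_j$ is indecomposable, $C_j$ lies entirely in $\Gamma_1'$ or entirely in $\Gamma_2'$; record this side as $\sigma(j) \in \{1,2\}$. If $\sigma(j) \neq \sigma(j+1)$ for some $j$, then $e_j \subseteq C_j \cap C_{j+1} \subseteq \Gamma_1' \cap \Gamma_2' = e$ forces $e_j = e$. Because $e_1, \ldots, e_n$ are distinct vertices of the cycle, at most one of them can equal $e$; yet the number of side-transitions around a closed cycle must be even, so in fact there are none. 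Hence all $C_j$ lie on a common side, and so does $\bar U$. Thus $\bar U$ is a connected indecomposable full subgraph strictly larger than $C_1$, contradicting the maximality of $C_1$.

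The main (though modest) obstacle is this parity-of-transitions step: one must carefully keep track of which chunk sits on which side of each candidate decomposition and combine the distinctness of the $e_j$ with the even-parity constraint of a closed walk. Once that bookkeeping is in place, the contradiction with the maximality of $C_1$ is immediate and the tree structure of $\Ch_\Gamma$ follows.
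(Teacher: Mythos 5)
Your proof is correct, and it takes a genuinely different route from the paper's. The paper's very brief argument stays entirely at the level of the chunk graph: it observes that a cycle in $\Ch_\Gamma$ passes through a separating-edge vertex $e$, and that removing $e$ ought to disconnect $\Ch_\Gamma$ (since $e$ is a separating edge of $\Gamma$) whereas the cycle would keep things connected, a contradiction. You instead descend to $\Gamma$ itself and invoke the maximality in the definition of a chunk: you assemble the chunks $C_1,\dots,C_n$ appearing on the cycle into the connected full subgraph $\bar U$, check $\bar U\supsetneq C_1$, and prove $\bar U$ is indecomposable, contradicting that $C_1$ is a \emph{maximal} indecomposable connected full subgraph. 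The technical heart of your version is the parity step: for any decomposition along a separating edge $e$, the number of side-transitions around the cycle is even, while at most one of the distinct vertices $e_1,\dots,e_n$ can equal $e$, so there are in fact no transitions and all the $C_j$ lie on a common side. This is precisely the bookkeeping that the paper leaves implicit when it asserts a contradiction, and you also make visible exactly where the stated facts (two chunks meet in at most a single separating edge; every chunk has at least three vertices) are used — in ruling out the $4$-cycle and in getting $\bar U\supsetneq C_1$. What you give up is the paper's brevity; what you gain is a self-contained argument that produces an explicit witness to non-maximality rather than relying on the reader to reconstruct how a separating edge separates $\Ch_\Gamma$.
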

\begin{proof}
Suppose that $\Ch_\Gamma$ is not a tree.
Since $\Ch_\Gamma$ is simple and bipartite, any embedded cycle in $\Ch_\Gamma$ has four or more verices, of which more than one vertex correspond to separating edges.
Therefore $\Ch_\Gamma$ can not be disconnected removing one separating edge of the cycle, which is a contradiction.
\end{proof}

One of the direct consequence of the theorem is that $\Ch_\Gamma$ has a unique vertex $*_\Gamma$ such that any vertex in $\Ch_\Gamma$ is far from $*_\Gamma$ at most $\operatorname{Diam}(\Ch_\Gamma)/2$, where $\operatorname{Diam}(\Ch_\Gamma)$ is the diameter of $\Ch_\Gamma$ with respect to the edge-length.
Hence the vertex $*_\Gamma$ plays the role of the \emph{center} of $\Ch_\Gamma$.

\begin{definition}[Center of the chunk tree]
We call the vertex $*_\Gamma$ the \emph{center} of the chunk tree $\Ch_\Gamma$.
\end{definition}

Let $\varepsilon=(e,C)$ be an edge of the chunk tree $\Ch_\Gamma$.
By cutting $\varepsilon$ in $\Ch_\Gamma$, we have two disjoint subgraphs $\Ch_{\Gamma,1}(\varepsilon)$ and $\Ch_{\Gamma,2}(\varepsilon)$ containing $e$ and $C$, respectively.
Then it induces a decomposition $(\Gamma_{1}(\varepsilon), e, \Gamma_{2}(\varepsilon))$ such that each $\Gamma_i(\epsilon)$ is the union of all chunks corresponding to vertices in $\Ch_{\Gamma,i}(\varepsilon)$. See Figure~\ref{figure:edges in chunk tree and decompositions} for example.

\begin{figure}[ht]
\[
\begin{tikzcd}[row sep=0pc, column sep=3pc]
\begin{tikzpicture}[baseline=-.5ex]
\foreach \i in {-2,-1,0,1,2} {
\draw[fill] (1,\i) circle (2pt); 
}
\draw[fill] (0, 0) circle (2pt) (240:1) circle (2pt) (120:1) circle (2pt);
\draw[postaction={on each segment={mid arrow}}] (0,0) node[left] {$e_2$} -- node[pos=0.75, right] {$\varepsilon_6$} (240:1) node[below left] {$C_3$};
\draw[red, thick, postaction={on each segment={mid arrow}}] (0,0) -- node[midway, below] {$\varepsilon_2$} (1,0);
\draw[postaction={on each segment={mid arrow}}] (0,0) -- node[pos=0.75, right] {$\varepsilon_5$} (120:1) node[above left] {$C_2$};
\draw[postaction={on each segment={mid arrow}}] (1,1) -- node[midway, right] {$\varepsilon_4$} (1,2) node[above] {$C_1$};
\draw[postaction={on each segment={mid arrow}}] (1,1) node[right] {$e_1$} -- node[midway, right] {$\varepsilon_1$} (1,0);
\draw[postaction={on each segment={mid arrow}}] (1,-1) -- node[midway, right] {$\varepsilon_3$} (1,0) node[right] {$C_0$};
\draw[postaction={on each segment={mid arrow}}] (1,-1) node[right] {$e_3$} -- node[midway, right] {$\varepsilon_7$} (1,-2) node[below] {$C_4$};
\end{tikzpicture}
\arrow[r,"\text{Cutting}"]&
\begin{tikzpicture}[baseline=-.5ex]
\draw[fill] (0, 0) circle (2pt) (240:1) circle (2pt) (120:1) circle (2pt);
\draw[postaction={on each segment={mid arrow}}] (0,0) node[left] {$e_2$} -- node[pos=0.75, right] {$\varepsilon_6$} (240:1) node[below left] {$C_3$};
\draw[postaction={on each segment={mid arrow}}] (0,0) -- node[pos=0.75, right] {$\varepsilon_5$} (120:1) node[above left] {$C_2$};
\end{tikzpicture}\displaystyle{\coprod}
\begin{tikzpicture}[baseline=-.5ex]
\foreach \i in {-2,-1,0,1,2} {
\draw[fill] (1,\i) circle (2pt); 
}
\draw[postaction={on each segment={mid arrow}}] (1,1) -- node[midway, right] {$\varepsilon_4$} (1,2) node[above] {$C_1$};
\draw[postaction={on each segment={mid arrow}}] (1,1) node[right] {$e_1$} -- node[midway, right] {$\varepsilon_1$} (1,0);
\draw[postaction={on each segment={mid arrow}}] (1,-1) -- node[midway, right] {$\varepsilon_3$} (1,0) node[right] {$C_0$};
\draw[postaction={on each segment={mid arrow}}] (1,-1) node[right] {$e_3$} -- node[midway, right] {$\varepsilon_7$} (1,-2) node[below] {$C_4$};
\end{tikzpicture}\\
\Ch_\Gamma & \Ch_{\Gamma,1}\quad \displaystyle\coprod \quad \Ch_{\Gamma,2}\\
\begin{tikzpicture}[baseline=-.5ex]
\draw[fill, red] (0, -0.5) circle (2pt) node[below=2ex, left=-.5ex] {$a$};
\draw[fill, red] (1, -0.5) circle (2pt) node[right] {$d$};
\draw[fill, red] (0, 0.5) circle (2pt) node[above=2ex, left] {$i$};
\draw[fill, red] (1, 0.5) circle (2pt) node[right] {$e$};
\draw[fill] (0, -1.5) circle (2pt) node[below left] {$b$};
\draw[fill] (1, -1.5) circle (2pt) node[below right] {$c$};
\draw[fill](1, 0.5) ++(72:1) circle (2pt) node[right] {$f$} ++(144:1) circle(2pt) node[above] {$g$} ++(216:1) circle(2pt) node[left] {$h$};
\draw[fill] (-1.5,1.5) circle (2pt) node[above left] {$j$};
\draw[fill] (-1.5,0.5) circle (2pt) node[left] {$k$};
\draw[fill] (-1.5,-0.5) circle (2pt) node[left] {$\ell$};
\draw[fill] (-1.5,-1.5) circle (2pt) node[below left] {$m$};
\draw (0.5,0.5) node[above=-.5ex] {$e_1$} (0,0) node[red, right=-1ex] {$e_2$} (0.5,-0.5) node[below=-.5ex] {$e_3$};
\draw[color=black, fill=red, fill opacity=0.2](0, -1.5) rectangle node[opacity=1, below] {$C_4$} (1,-0.5);
\draw[color=black, fill=yellow, fill opacity=0.2](1, 0.5) -- ++(72:1) -- ++(144:1) -- ++(216:1) -- ++(288:1);
\draw (0.5, 1) node[opacity=1, above] {$C_1$};
\draw[color=black, fill=white, fill opacity=0.2](0, 0.5) -- ++(-1.5, 1) -- ++(-90:1) node[opacity=1, above right] {$C_2$} -- ++(1.5, -1);
\draw[color=black, fill=black, fill opacity=0.2](0, -0.5) -- ++(-1.5, -1) -- ++(90:1) node[opacity=1, below right] {$C_3$} -- ++(1.5, 1);
\draw[color=red, thick, fill=blue, fill opacity=0.2](0, -0.5) rectangle node[opacity=1, right=-1ex] {$C_0$} (1,0.5);
\end{tikzpicture}
\arrow[r,"\text{Decomp.}"]&
\begin{tikzpicture}[baseline=-.5ex]
\draw[fill] (0, -0.5) circle (2pt) node[right] {$a$};
\draw[fill] (0, 0.5) circle (2pt) node[right] {$i$};
\draw[fill] (-1.5,1.5) circle (2pt) node[above left] {$j$};
\draw[fill] (-1.5,0.5) circle (2pt) node[left] {$k$};
\draw[fill] (-1.5,-0.5) circle (2pt) node[left] {$\ell$};
\draw[fill] (-1.5,-1.5) circle (2pt) node[below left] {$m$};
\draw (0,0) node[right] {$e_2$};
\draw[color=black, fill=white, fill opacity=0.2](0, 0.5) -- ++(-1.5, 1) -- ++(-90:1) node[opacity=1, above right] {$C_2$} -- ++(1.5, -1);
\draw[color=black, fill=black, fill opacity=0.2](0, -0.5) -- ++(-1.5, -1) -- ++(90:1) node[opacity=1, below right] {$C_3$} -- ++(1.5, 1) -- ++(0,-1);
\end{tikzpicture}
\displaystyle{\coprod_{
\begin{tikzpicture}[baseline=-.5ex, scale=0.5]
\draw[fill] (0,0.5) circle (2pt) node[above] {$i$} -- node[midway, right] {$e_2$} (0,-0.5) circle (2pt) node[below] {$a$};
\end{tikzpicture}
}}
\begin{tikzpicture}[baseline=-.5ex]
\draw[fill] (0, -0.5) circle (2pt) node[left] {$a$};
\draw[fill] (1, -0.5) circle (2pt) node[right] {$d$};
\draw[fill] (0, 0.5) circle (2pt) node[left] {$i$};
\draw[fill] (1, 0.5) circle (2pt) node[right] {$e$};
\draw[fill] (0, -1.5) circle (2pt) node[below left] {$b$};
\draw[fill] (1, -1.5) circle (2pt) node[below right] {$c$};
\draw[fill](1, 0.5) ++(72:1) circle (2pt) node[right] {$f$} ++(144:1) circle(2pt) node[above] {$g$} ++(216:1) circle(2pt) node[left] {$h$};
\draw[color=black, fill=red, fill opacity=0.2](0, -1.5) rectangle node[opacity=1, below] {$C_4$} (1,-0.5);
\draw[color=black, fill=blue, fill opacity=0.2](0, -0.5) rectangle node[opacity=1] {$C_0$} (1,0.5);
\draw[color=black, fill=yellow, fill opacity=0.2](1, 0.5) -- ++(72:1) -- ++(144:1) -- ++(216:1) -- ++(288:1);
\draw (0.5, 1) node[opacity=1, above] {$C_1$};
\end{tikzpicture}\\
\Gamma & \Gamma_1\qquad \displaystyle\coprod_{e_2} \qquad\Gamma_2
\end{tikzcd}
\]
\caption{An edge in $\Ch_\Gamma$ and a decomposition}
\label{figure:edges in chunk tree and decompositions}
\end{figure}
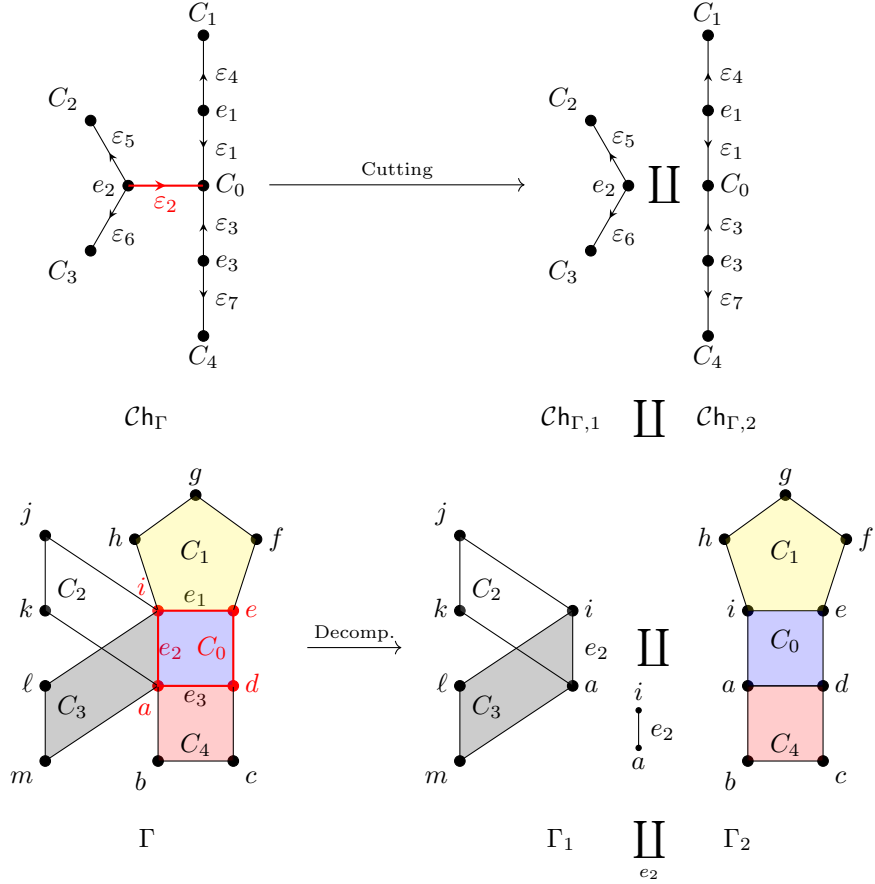

\begin{remark}\label{remark:not every decomposition comes from edges of chunk tree}
Each edge in $\Ch_\Gamma$ induces a decomposition of $\Gamma$, but not every decomposition of $\Gamma$ comes from an edge in $\Ch_\Gamma$.
\end{remark}

\subsection{Modifications of \CLTTF graphs}
We introduce two ways of modifications which are graph isomorphisms and edge-twists.

\subsubsection{Graph isomorphisms}
Let $\mathfrak{S}_V$ be the group of permutations on $V$ and let $(\alpha:V\to V)\in\mathfrak{S}_V$.
Then for each graph $\Gamma=(V, E_\Gamma, m_\Gamma)$, there exists a unique graph $\Delta=(V, E_\Delta, m_\Delta)$ such that the permutation $\alpha$ induces a graph isomorphism $\Gamma\to\Delta$, denoted by $\alpha$ again.
Here we mean by a \emph{graph isomorphism} $\alpha$ from $\Gamma$ to $\Delta$ a permutation on $V$ which preserves edges with labels, i.e., for every pair $\{s,t\}\subset V, s\neq t$,
\[
\{s,t\}\in E_\Gamma \Longleftrightarrow \{\alpha(s), \alpha(t)\}\in E_\Delta
\]
and for each $e\in E_\Gamma$, $m_\Gamma(e)=m_\Delta(\alpha(e))$.
We also denote by $\Gamma\cong\Delta$ if $\Gamma$ and $\Delta$ are isomorphic as CLTTF graphs.

Since each graph isomorphism preserves the connectivity of subgraphs, it maps chunks and separating edges to themselves, respectively. In other words, it preserves the chunk tree.

\begin{theorem}\label{theorem:invariance of the chunk tree under graph isomorphisms}
Let $\alpha\in\mathfrak{S}_V$.
For each $\Gamma$ and $\Delta=\alpha(\Gamma)$, there is an induced isomorphism between rooted trees
\[
\Ch(\alpha):(\Ch_\Gamma,*_\Gamma)\rightarrow (\Ch_\Delta,*_\Delta).
\]
\end{theorem}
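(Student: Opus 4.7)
The plan is to build $\Ch(\alpha)$ vertex-by-vertex using the fact that the defining properties (full subgraph, connectivity, indecomposability, maximality, being a separating edge) are all invariants of the graph isomorphism $\alpha \colon \Gamma \to \Delta$. Concretely, I first observe that $\alpha$ bijects the set of decompositions of $\Gamma$ over a separating edge with that of $\Delta$: if $(\Gamma_1, e, \Gamma_2)$ is such a decomposition, then $(\alpha(\Gamma_1), \alpha(e), \alpha(\Gamma_2))$ is one of $\Delta$, because $\alpha$ preserves full subgraphs, unions, intersections, and connectivity. In particular, $e$ is a separating edge of $\Gamma$ if and only if $\alpha(e)$ is a separating edge of $\Delta$, and a connected full subgraph $C \subset \Gamma$ is indecomposable if and only if $\alpha(C) \subset \Delta$ is. Passing to maximal such subgraphs, $\alpha$ restricts to a bijection between chunks of $\Gamma$ and chunks of $\Delta$.

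Next, define $\Ch(\alpha) \colon V(\Ch_\Gamma) \to V(\Ch_\Delta)$ by sending a chunk $C$ to $\alpha(C)$ and a separating edge $e$ to $\alpha(e)$; the previous step shows this is a bijection. For an edge of the chunk graph, we have $(e, C) \in E(\Ch_\Gamma)$ if and only if $e \subset C$, which is equivalent to $\alpha(e) \subset \alpha(C)$, i.e., $(\alpha(e), \alpha(C)) \in E(\Ch_\Delta)$. Hence $\Ch(\alpha)$ is a graph isomorphism between $\Ch_\Gamma$ and $\Ch_\Delta$, and in particular an isometry with respect to the edge-length metric.

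Finally, the center $*_\Gamma$ was characterized as the unique vertex of $\Ch_\Gamma$ whose eccentricity (maximum distance to any other vertex) equals $\operatorname{Diam}(\Ch_\Gamma)/2$. Since $\Ch(\alpha)$ is an isometry, it preserves both diameters and eccentricities, so it must send the unique minimizer in $\Ch_\Gamma$ to the unique minimizer in $\Ch_\Delta$, i.e., $\Ch(\alpha)(*_\Gamma) = *_\Delta$. This upgrades $\Ch(\alpha)$ to an isomorphism of rooted trees.

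I do not expect any real obstacle here: everything follows because graph isomorphism is a very strong property that transports all combinatorial invariants. The only point requiring mild care is to justify that the center is genuinely unique — which is a standard fact about finite trees and is implicit in the earlier paragraph defining $*_\Gamma$ — so that the isometry has no choice but to match the two centers.
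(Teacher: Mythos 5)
Your proof is correct and follows the same approach the paper takes implicitly; the paper provides no explicit proof for this theorem, simply stating beforehand that ``each graph isomorphism preserves the connectivity of subgraphs, it maps chunks and separating edges to themselves, respectively.'' Your write-up fills in the details of that observation and, appropriately, adds the step that an isometry of rooted trees must match centers (whose uniqueness is guaranteed because the chunk tree is bipartite with all leaves being chunks, hence has even diameter).
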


\subsubsection{Edge-twists}
Another way to obtain a new \CLTTF graph is an \emph{edge-twist}.

\begin{definition}[Edge-twists]
Let $(\Gamma_1, e=\{s,t\}, \Gamma_2)$ be a decomposition of $\Gamma=(V,E_\Gamma, m_\Gamma)$.
The \emph{edge-twist} of $\Gamma$ with respect to the decomposition $(\Gamma_1, e, \Gamma_2)$ is the graph $\Delta=(V, E_\Delta, m_\Delta)$ with the label $m_\Delta:E_\Delta\to\mathbb{Z}_{\ge2}$ obtained as follows:
\begin{itemize}
\item if $m_\Gamma(e)$ is even, then $(E_\Delta, m_\Delta)\coloneqq (E_\Gamma,m_\Gamma)$,
\item if $m_\Gamma(e)$ is odd, then 
\begin{align*}
E_\Delta &\coloneqq \{f \mid f\in E_\Gamma, f\not\subset \Gamma_2\text{ or }f\cap e=\varnothing\}
\cup\{\{v, s\}, \{w,t\} \mid v,w\in \Gamma_2, \{v, t\}, \{w,s\}\in E_\Gamma\},\\
m_\Delta(f) &\coloneqq \begin{cases}
m_\Gamma(f) & f\not\subset \Gamma_2\text{ or }f\cap e=\varnothing;\\
m_\Gamma(\{v,t\}) & f=\{v,s\}, v\in \Gamma_2;\\
m_\Gamma(\{v,s\}) & f=\{v,t\}, v\in \Gamma_2.
\end{cases}
\end{align*}
\end{itemize}

For each edge $\varepsilon=(e,C)\in E(\Ch_\Gamma)$, the edge-twist of $\Gamma$ with respect to the decomposition $(\Gamma_1(\varepsilon), e, \Gamma_2(\varepsilon))$ will be denoted by 
\[
\Delta=\bar\varepsilon(\Gamma),\quad\text{ or }\quad\bar\varepsilon:\Gamma\to \Delta.
\]

We say that $\Gamma$ and $\Delta$ are \emph{edge-twist equivalent} if $\Delta$ is obtained from $\Gamma$ by a sequence of edge-twists, denoted by $\Gamma\sim \Delta$.
\end{definition}

Roughly speaking, the edge-twist along $(\Gamma_1, e=\{s,t\}, \Gamma_2)$ with $m(e)$ odd will interchange the connectivities with $s$ and $t$ only for vertices in $\Gamma_2$.
An intuitive example is depicted as follows:
\[
\begin{tikzcd}[row sep=1.5pc, column sep=3pc]
\Gamma=\begin{tikzpicture}[baseline=-.5ex]
\draw[fill] (0,-0.5) circle (2pt) (0, 0.5) circle (2pt);
\draw (-2,-1) rectangle node {$\Gamma_1$} (-1,1);
\draw (2,-1) rectangle node {$\Gamma_2$} (1,1);
\draw (0,-0.5) node[below] {$t$} -- node[midway, left] {$e$} (0, 0.5) node[above] {$s$};
\draw (-1,0) -- (0, -0.5);
\draw (-1,0.5) -- (0, 0.5);
\draw (-1, -0.5) -- (0,-0.5);
\draw (0,-0.5) -- (1, -0.5);
\draw (0,0.5) -- (1, 0);
\draw (0,0.5) -- (1, 0.5);
\end{tikzpicture}
\arrow[r,"\overline{(\Gamma_1, e, \Gamma_2)}"]&
\begin{tikzpicture}[baseline=-.5ex]
\draw[fill] (0,-0.5) circle (2pt) (0, 0.5) circle (2pt);
\draw (-2,-1) rectangle node {$\Gamma_1$} (-1,1);
\draw (2,-1) rectangle node {$\Gamma_2$} (1,1);
\draw (0,-0.5) node[below] {$t$} -- node[midway, left] {$e$} (0, 0.5) node[above] {$s$};
\draw (-1,0) -- (0, -0.5);
\draw (-1,0.5) -- (0, 0.5);
\draw (-1, -0.5) -- (0,-0.5);
\draw (0,0.5) -- (1, -0.5);
\draw (0,-0.5) -- (1, 0);
\draw (0,-0.5) -- (1, 0.5);
\end{tikzpicture}=\Delta
\end{tikzcd}
\]
Therefore when $m(e)$ is odd, then the result $\Delta$ never be the same as $\Gamma$ since $\Gamma$ is triangle-free.

Notice that the decomposition $(\Gamma_1, e, \Gamma_2)$ of $\Gamma$ can be regarded as a decomposition of $\Delta$ as well.
Therefore, chunks and separating edges in $\Gamma_i\subset \Gamma$ are again chunks and separating edges in $\Gamma_i\subset\Delta$.
Namely, we have an induced isomorphism between chunk trees.
\begin{theorem}\label{theorem:invariance of the chunk tree under edge-twists}
Let $(\Gamma_1, e,\Gamma_2)$ be a decomposition of $\Gamma$.
The edge-twist $\Gamma\to \Delta$ with respect to the decomposition $(\Gamma_1, e,\Gamma_2)$ induces an isomorphism $(\Ch_\Gamma, *_\Gamma)\rightarrow (\Ch_\Delta,*_\Delta)$ between rooted trees.

In particular, for each $\varepsilon\in E(\Ch_\Gamma)$, the induced isomorphism will be denoted by $\Ch(\varepsilon)$.
\[
\Ch(\varepsilon):(\Ch_\Gamma,*_\Gamma)\rightarrow (\Ch_\Delta,*_\Delta)
\]
\end{theorem}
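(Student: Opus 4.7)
The plan is to construct the induced map on chunk trees directly, reducing to the invariance under graph isomorphisms already established in Theorem~\ref{theorem:invariance of the chunk tree under graph isomorphisms}. When $m_\Gamma(e)$ is even the twist is the identity and there is nothing to prove, so I assume $m_\Gamma(e)$ is odd throughout.

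First I would observe that the transposition $\tau=(s,t)$, restricted to $V(\Gamma_2)$, defines a label-preserving graph isomorphism from the full subgraph $\Gamma_2\subset\Gamma$ onto the full subgraph $\Gamma_2'\subset\Delta$ on the same vertex set. This is a direct check from the definition of the edge-twist: every edge of $\Gamma_2$ incident to $s$ (resp.\ $t$) but distinct from $e$ is replaced in $\Delta$ by one incident to $t$ (resp.\ $s$) carrying the same label, edges of $\Gamma_2$ disjoint from $e$ are retained unchanged, and $e$ itself is fixed. The subgraph $\Gamma_1\subset\Gamma$ and its edge set are untouched by the twist.

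Next, since $e$ is separating both in $\Gamma$ and in $\Delta$, indecomposability forces every chunk of $\Gamma$ to lie entirely in $\Gamma_1$ or entirely in $\Gamma_2$, and similarly every separating edge other than $e$. I would then define $\Ch(\bar\varepsilon)\colon V(\Ch_\Gamma)\to V(\Ch_\Delta)$ to be the identity on the vertex $e$, the identity on chunks and separating edges contained in $\Gamma_1$, and the map $X\mapsto\tau(X)$ on chunks and separating edges contained in $\Gamma_2$. Using that $\tau$ is a graph isomorphism $\Gamma_2\to\Gamma_2'$ and invoking Theorem~\ref{theorem:invariance of the chunk tree under graph isomorphisms} on the $\Gamma_2$-side, one verifies that this is a well-defined bijection sending chunks to chunks and separating edges to separating edges. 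The two partial definitions agree on the common vertex $e$ because $\tau(e)=e$, and the incidence relation $(e',C)\in E(\Ch_\Gamma)$ is preserved on each side by construction.

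Finally, since any isomorphism of trees preserves the center, the map automatically lifts to an isomorphism of rooted trees $(\Ch_\Gamma,*_\Gamma)\to(\Ch_\Delta,*_\Delta)$. The main technical hurdle is the verification that chunks and separating edges of $\Gamma$ contained in $\Gamma_i$ correspond bijectively to those of $\Delta$ contained in the image side under the prescribed map. This boils down to showing that the chunk structure on each side of the separating edge $e$ is intrinsic to that side—a fact that follows because every decomposition of $\Gamma$ over a separating edge inside $\Gamma_i$ restricts to a decomposition of $\Gamma_i$ and conversely extends by gluing the other side along $e$. Once this compatibility is in place, the rest of the argument is purely formal.
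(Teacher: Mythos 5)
Your argument is correct and follows essentially the same route as the paper, which simply declares the theorem to ``follow obviously from the above discussion'' (namely that $(\Gamma_1, e, \Gamma_2)$ remains a decomposition of $\Delta$, so chunks and separating edges on each side correspond); you usefully make explicit the identity-on-$\Gamma_1$, $\tau$-on-$\Gamma_2$ description of the induced map, its agreement at the shared vertex $e$, and the intrinsicness of the chunk structure on each side. One small caution: Theorem~\ref{theorem:invariance of the chunk tree under graph isomorphisms} is stated for permutations in $\mathfrak{S}_V$ acting on graphs whose vertex set is the global set $V$, so invoking it for $\tau\colon\Gamma_2\to\Gamma_2'$ is slightly out of scope as worded, though the underlying argument (a graph isomorphism preserves connectivity and full subgraphs, hence carries chunks to chunks and separating edges to separating edges) applies verbatim.
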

\begin{proof}
This follows obviously from the above discussion and we omit the proof.
\end{proof}

The direct consequence of this theorem is as follows:
\begin{corollary}
Let $\llbracket\Gamma\rrbracket$ be an edge-twist equivalent class of an edge-separated \CLTTF graph $\Gamma$.
Then the pair $(\Ch_{\llbracket\Gamma\rrbracket},*_{\llbracket\Gamma\rrbracket})$ of the chunk tree $\Ch_{\llbracket\Gamma\rrbracket}$ and the central vertex $*_{\llbracket\Gamma\rrbracket}$ for the class $\llbracket\Gamma\rrbracket$ is well-defined.

In other words, there is a canonical identification $(\Ch_\Gamma,*_\Gamma)\cong (\Ch_\Delta,*_\Delta)$ if $\Gamma\sim \Delta$.
\end{corollary}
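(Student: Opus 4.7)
The plan is to proceed by induction on the length of an edge-twist sequence relating two representatives, applying Theorem~\ref{theorem:invariance of the chunk tree under edge-twists} at each step and then verifying that the resulting identification is intrinsic (independent of the sequence chosen).

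For existence, since $\Gamma\sim\Delta$, I fix any finite sequence of edge-twists
\[
\Gamma=\Gamma^{(0)}\xrightarrow{\bar\varepsilon_1}\Gamma^{(1)}\xrightarrow{\bar\varepsilon_2}\cdots\xrightarrow{\bar\varepsilon_n}\Gamma^{(n)}=\Delta.
\]
By Theorem~\ref{theorem:invariance of the chunk tree under edge-twists}, each twist $\bar\varepsilon_i$ produces an isomorphism $\Ch(\varepsilon_i):(\Ch_{\Gamma^{(i-1)}},*_{\Gamma^{(i-1)}})\to(\Ch_{\Gamma^{(i)}},*_{\Gamma^{(i)}})$ of rooted trees. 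Composing these isomorphisms yields an isomorphism $(\Ch_\Gamma,*_\Gamma)\to(\Ch_\Delta,*_\Delta)$, which is what is needed.

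For canonicality, the key point is that each vertex of a chunk tree is either a chunk or a separating edge of the underlying \CLTTF graph, and as such is faithfully encoded by its vertex set in $V$. An edge-twist $\bar\varepsilon_i$ along $(\Gamma_1,e,\Gamma_2)$ only alters which edges are present in $\Gamma^{(i)}$, and in a way that preserves the decomposition: every chunk of $\Gamma^{(i-1)}$ contained in $\Gamma_j$ remains a chunk of $\Gamma^{(i)}$ with the same vertex set (for $j=2$ the internal edges may shuffle near $e$, but the vertex set is unchanged), and separating edges preserve both their endpoints and their status as separating edges. Hence, viewed at the level of vertex subsets of $V$, each $\Ch(\varepsilon_i)$ is literally the identity, and so is the composite. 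Two sequences of twists from $\Gamma$ to $\Delta$ therefore induce the same bijection on the vertex set of the chunk tree; since a tree isomorphism is determined by its action on vertices, the two composites agree, and the identification is canonical.

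The main obstacle is essentially expository rather than mathematical: one must articulate precisely in what sense the isomorphism of Theorem~\ref{theorem:invariance of the chunk tree under edge-twists} is natural, namely by identifying chunks and separating edges with their vertex subsets of $V$. Once this viewpoint is adopted, the composition of twist-induced isomorphisms is just the identity on these subsets, and well-definedness follows without the need to compare two distinct sequences directly.
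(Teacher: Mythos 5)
Your existence argument --- composing the rooted-tree isomorphisms of Theorem~\ref{theorem:invariance of the chunk tree under edge-twists} along a finite chain of edge-twists --- is exactly what the paper has in mind; it omits this as obvious.

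The canonicality step, however, rests on a false claim. You assert that every chunk ``remains a chunk $\dots$ with the same vertex set'' and that ``separating edges preserve both their endpoints,'' so that each $\Ch(\varepsilon_i)$ is ``literally the identity'' on vertex subsets of $V$. This is not so. An edge-twist along $(\Gamma_1, e=\{s,t\}, \Gamma_2)$ replaces every edge $\{v,s\}$ with $v\in\Gamma_2$ by $\{v,t\}$ and conversely. Hence a separating edge $e'=\{s,u\}\subset\Gamma_2$ that shares the endpoint $s$ with $e$ is sent to $\{t,u\}$, a different two-element subset of $V$; and any chunk inside $\Gamma_2$ whose vertex set contains exactly one of $s,t$ has its vertex set altered by the transposition of $s$ and $t$. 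This happens already for edge-twists along outward edges of $\Ch_\Gamma$: take, for example, a chain of $4$-cycles $C_1,C,C',C''$ glued along the separating edges $\{s,t\}$, $\{s,u\}$, $\{s,d\}$ (so the center is $\{s,u\}$ and $(e',C')$ is outward); twisting along $(e',C')$ turns the separating edge $\{s,d\}$ into $\{u,d\}$ and the chunk $C''$ into a chunk on a genuinely different vertex set. So the induced bijection on chunk-tree vertices is \emph{not} the identity on subsets of $V$, the composite of the $\Ch(\varepsilon_i)$ is not automatically intrinsic, and your argument that two twist-sequences from $\Gamma$ to $\Delta$ produce the same identification does not go through. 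To repair it you would need to identify chunk-tree vertices by some genuinely twist-invariant datum (for instance their positions in the rooted tree, matched inductively from the center), rather than by their underlying vertex subsets of $V$.
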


Furthermore, each edge-twist $\bar\varepsilon:\Gamma\to \Delta$ induces a label-preserving bijection $\bar\varepsilon:(E_\Gamma, m_\Gamma)\to (E_\Delta, m_\Delta)$. 
That is, for each edge $e\in E_\Gamma$, we have an edge $\bar\varepsilon(e)$ in $\Delta$ so that $m_\Gamma(e)=m_\Delta(\bar\varepsilon(e))$.
In particular, for each separating edge $e\subset E_\Gamma$, then its label remains the same in its edge-twist equivalence class.

\begin{definition}
For each edge $\varepsilon=(e,C)\in E(\Ch_{\llbracket\Gamma\rrbracket})$, the label $m_{\llbracket\Gamma\rrbracket}(\varepsilon)$ is defined as $m_\Gamma(e)$.
\end{definition}

Due to Remark~\ref{remark:not every decomposition comes from edges of chunk tree}, edge-twists with respect to arbitrary decompositions of $\Gamma$ form a strictly larger class than those with respect to decompositions coming from edges in the chunk tree $\Ch_\Gamma$.
However, one can easily check that every edge-twist is actually a composition of the latter edge-twists.
See Figure~\ref{figure:composition of edge-twists} for example.
Note that in the Figure~\ref{figure:composition of edge-twists}, the chunk trees are identified in the obvious way.


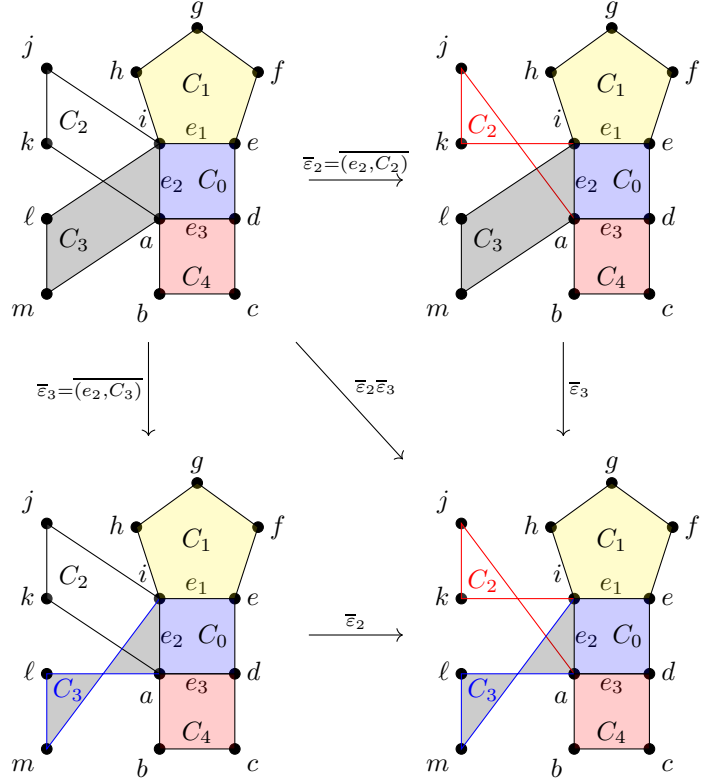
\begin{figure}[ht]
\[
\begin{tikzcd}[row sep=3pc, column sep=3pc]
\begin{tikzpicture}[baseline=-.5ex]
\draw[fill] (0, -0.5) circle (2pt) node[below=2ex, left=-.5ex] {$a$};
\draw[fill] (1, -0.5) circle (2pt) node[right] {$d$};
\draw[fill] (0, 0.5) circle (2pt) node[above=2ex, left] {$i$};
\draw[fill] (1, 0.5) circle (2pt) node[right] {$e$};
\draw[fill] (0, -1.5) circle (2pt) node[below left] {$b$};
\draw[fill] (1, -1.5) circle (2pt) node[below right] {$c$};
\draw[fill](1, 0.5) ++(72:1) circle (2pt) node[right] {$f$} ++(144:1) circle(2pt) node[above] {$g$} ++(216:1) circle(2pt) node[left] {$h$};
\draw[fill] (-1.5,1.5) circle (2pt) node[above left] {$j$};
\draw[fill] (-1.5,0.5) circle (2pt) node[left] {$k$};
\draw[fill] (-1.5,-0.5) circle (2pt) node[left] {$\ell$};
\draw[fill] (-1.5,-1.5) circle (2pt) node[below left] {$m$};
\draw (0.5,0.5) node[above=-.5ex] {$e_1$} (0,0) node[right=-1ex] {$e_2$} (0.5,-0.5) node[below=-.5ex] {$e_3$};
\draw[color=black, fill=red, fill opacity=0.2](0, -1.5) rectangle node[opacity=1, below] {$C_4$} (1,-0.5);
\draw[color=black, fill=yellow, fill opacity=0.2](1, 0.5) -- ++(72:1) -- ++(144:1) -- ++(216:1) -- ++(288:1);
\draw (0.5, 1) node[opacity=1, above] {$C_1$};
\draw[color=black, fill=white, fill opacity=0.2](0, 0.5) -- ++(-1.5, 1) -- ++(-90:1) node[opacity=1, above right] {$C_2$} -- ++(1.5, -1);
\draw[color=black, fill=black, fill opacity=0.2](0, -0.5) -- ++(-1.5, -1) -- ++(90:1) node[opacity=1, below right] {$C_3$} -- ++(1.5, 1);
\draw[color=black, fill=blue, fill opacity=0.2](0, -0.5) rectangle node[opacity=1, right=-1ex] {$C_0$} (1,0.5);
\end{tikzpicture}
\arrow[r,"{\bar\varepsilon_2=\overline{(e_2,C_2)}}"]\arrow[d,"{\bar\varepsilon_3=\overline{(e_2,C_3)}}"']\arrow[rd,"\bar\varepsilon_2\bar\varepsilon_3"] &
\begin{tikzpicture}[baseline=-.5ex]
\draw[fill] (0, -0.5) circle (2pt) node[below=2ex, left=-.5ex] {$a$};
\draw[fill] (1, -0.5) circle (2pt) node[right] {$d$};
\draw[fill] (0, 0.5) circle (2pt) node[above=2ex, left] {$i$};
\draw[fill] (1, 0.5) circle (2pt) node[right] {$e$};
\draw[fill] (0, -1.5) circle (2pt) node[below left] {$b$};
\draw[fill] (1, -1.5) circle (2pt) node[below right] {$c$};
\draw[fill](1, 0.5) ++(72:1) circle (2pt) node[right] {$f$} ++(144:1) circle(2pt) node[above] {$g$} ++(216:1) circle(2pt) node[left] {$h$};
\draw[fill] (-1.5,1.5) circle (2pt) node[above left] {$j$};
\draw[fill] (-1.5,0.5) circle (2pt) node[left] {$k$};
\draw[fill] (-1.5,-0.5) circle (2pt) node[left] {$\ell$};
\draw[fill] (-1.5,-1.5) circle (2pt) node[below left] {$m$};
\draw (0.5,0.5) node[above=-.5ex] {$e_1$} (0,0) node[right=-1ex] {$e_2$} (0.5,-0.5) node[below=-.5ex] {$e_3$};
\draw[color=black, fill=red, fill opacity=0.2](0, -1.5) rectangle node[opacity=1, below] {$C_4$} (1,-0.5);
\draw[color=black, fill=yellow, fill opacity=0.2](1, 0.5) -- ++(72:1) -- ++(144:1) -- ++(216:1) -- ++(288:1);
\draw (0.5, 1) node[opacity=1, above] {$C_1$};
\draw[color=red, fill=white, fill opacity=0.2](0, -0.5) -- (-1.5, 1.5) -- ++(0,-1) node[opacity=1, above=1.5ex, right=-.5ex] {$C_2$} -- (0, 0.5);
\draw[color=black, fill=black, fill opacity=0.2](0, -0.5) -- ++(-1.5, -1) -- ++(90:1) node[opacity=1, below right] {$C_3$} -- ++(1.5, 1);
\draw[color=black, fill=blue, fill opacity=0.2](0, -0.5) rectangle node[opacity=1, right=-1ex] {$C_0$} (1,0.5);
\end{tikzpicture}
\arrow[d,"\bar\varepsilon_3"]\\
\begin{tikzpicture}[baseline=-.5ex]
\draw[fill] (0, -0.5) circle (2pt) node[below=2ex, left=-.5ex] {$a$};
\draw[fill] (1, -0.5) circle (2pt) node[right] {$d$};
\draw[fill] (0, 0.5) circle (2pt) node[above=2ex, left] {$i$};
\draw[fill] (1, 0.5) circle (2pt) node[right] {$e$};
\draw[fill] (0, -1.5) circle (2pt) node[below left] {$b$};
\draw[fill] (1, -1.5) circle (2pt) node[below right] {$c$};
\draw[fill](1, 0.5) ++(72:1) circle (2pt) node[right] {$f$} ++(144:1) circle(2pt) node[above] {$g$} ++(216:1) circle(2pt) node[left] {$h$};
\draw[fill] (-1.5,1.5) circle (2pt) node[above left] {$j$};
\draw[fill] (-1.5,0.5) circle (2pt) node[left] {$k$};
\draw[fill] (-1.5,-0.5) circle (2pt) node[left] {$\ell$};
\draw[fill] (-1.5,-1.5) circle (2pt) node[below left] {$m$};
\draw (0.5,0.5) node[above=-.5ex] {$e_1$} (0,0) node[right=-1ex] {$e_2$} (0.5,-0.5) node[below=-.5ex] {$e_3$};
\draw[color=black, fill=red, fill opacity=0.2](0, -1.5) rectangle node[opacity=1, below] {$C_4$} (1,-0.5);
\draw[color=black, fill=yellow, fill opacity=0.2](1, 0.5) -- ++(72:1) -- ++(144:1) -- ++(216:1) -- ++(288:1);
\draw (0.5, 1) node[opacity=1, above] {$C_1$};
\draw[color=black, fill=white, fill opacity=0.2](0, 0.5) -- ++(-1.5, 1) -- ++(-90:1) node[opacity=1, above right] {$C_2$} -- ++(1.5, -1);
\draw[color=blue, fill=black, fill opacity=0.2](0, 0.5) -- (-1.5, -1.5) -- ++(0,1) node[opacity=1, below=1.5ex, right=-0.5ex] {$C_3$} -- (0, -0.5);
\draw[color=black, fill=blue, fill opacity=0.2](0, -0.5) rectangle node[opacity=1, right=-1ex] {$C_0$} (1,0.5);
\end{tikzpicture}
\arrow[r,"\bar\varepsilon_2"]&
\begin{tikzpicture}[baseline=-.5ex]
\draw[fill] (0, -0.5) circle (2pt) node[below=2ex, left=-.5ex] {$a$};
\draw[fill] (1, -0.5) circle (2pt) node[right] {$d$};
\draw[fill] (0, 0.5) circle (2pt) node[above=2ex, left] {$i$};
\draw[fill] (1, 0.5) circle (2pt) node[right] {$e$};
\draw[fill] (0, -1.5) circle (2pt) node[below left] {$b$};
\draw[fill] (1, -1.5) circle (2pt) node[below right] {$c$};
\draw[fill](1, 0.5) ++(72:1) circle (2pt) node[right] {$f$} ++(144:1) circle(2pt) node[above] {$g$} ++(216:1) circle(2pt) node[left] {$h$};
\draw[fill] (-1.5,1.5) circle (2pt) node[above left] {$j$};
\draw[fill] (-1.5,0.5) circle (2pt) node[left] {$k$};
\draw[fill] (-1.5,-0.5) circle (2pt) node[left] {$\ell$};
\draw[fill] (-1.5,-1.5) circle (2pt) node[below left] {$m$};
\draw (0.5,0.5) node[above=-.5ex] {$e_1$} (0,0) node[right=-1ex] {$e_2$} (0.5,-0.5) node[below=-.5ex] {$e_3$};
\draw[color=black, fill=red, fill opacity=0.2](0, -1.5) rectangle node[opacity=1, below] {$C_4$} (1,-0.5);
\draw[color=black, fill=yellow, fill opacity=0.2](1, 0.5) -- ++(72:1) -- ++(144:1) -- ++(216:1) -- ++(288:1);
\draw (0.5, 1) node[opacity=1, above] {$C_1$};
\draw[color=red, fill=white, fill opacity=0.2](0, -0.5) -- (-1.5, 1.5) -- ++(0,-1) node[opacity=1, above=1.5ex, right=-.5ex] {$C_2$} -- (0, 0.5);
\draw[color=blue, fill=black, fill opacity=0.2](0, 0.5) -- (-1.5, -1.5) -- ++(0,1) node[opacity=1, below=1.5ex, right=-0.5ex] {$C_3$} -- (0, -0.5);
\draw[color=black, fill=blue, fill opacity=0.2](0, -0.5) rectangle node[opacity=1, right=-1ex] {$C_0$} (1,0.5);
\end{tikzpicture}
\end{tikzcd}
\]
\caption{A composition of edge-twists}
\label{figure:composition of edge-twists}
\end{figure}

More precisely, let $e\subset \Gamma$ be an odd-labeled separating edge and let $C_1,\dots, C_n\subset \Gamma$ be all chunks containing $e$.
In the chunk tree $\Ch_\Gamma$, the vertex $e$ is of $n$-valent and edges $\varepsilon_i=(e,C_i)$ are adjacent to $e$.
Suppose that a decomposition $(\Gamma_1, e, \Gamma_2)$ is given such that for some $\ell<n$. That is,
\begin{align*}
C_1,\dots, C_\ell \subset \Gamma_1\quad\text{ and }\quad
C_{\ell+1},\dots, C_n \subset \Gamma_2.
\end{align*}
Then the edge-twist with respect to $(\Gamma_1,e,\Gamma_2)$ is nothing but the composition
\[
\bar\varepsilon_{\ell+1}\bar\varepsilon_\ell\cdots\bar\varepsilon_{n}:\Gamma \to \Delta.\footnotemark
\]
In this sense, it is enough to consider edge-twists along edges in the chunk tree.
Then one can easily check that for every $i,j\in\{1,\dots, n\}$,
\begin{equation}\label{equation:commutativity of edge-twists}
\bar\varepsilon_i(\bar\varepsilon_j(\Gamma))=\bar\varepsilon_j(\bar\varepsilon_i(\Gamma))\quad\text{ and }\quad
\bar\varepsilon_i(\bar\varepsilon_i(\Gamma))=\Gamma.
\end{equation}
\footnotetext{Here we omit the notation $\circ$ for compositions.}

On the other hand, edge-twists along all edges in $\Ch_\Gamma$ are sometimes too many.
If we take edge-twists on $\Gamma$ with respect to all $\varepsilon_i$'s, then the result graph
\[
\Delta=(\bar\varepsilon_1\bar\varepsilon_2\cdots\bar\varepsilon_n)(\Gamma)
\]
is obtained from $\Gamma$ by interchanging the roles of two vertices of $e$.
That is, there is a graph isomorphism $\alpha\in\mathfrak{S}_V$ with $\alpha(\Gamma)=\Delta$ defined by
\[
\alpha(v) = \begin{cases}
v & v\neq s, t;\\
t & v=s;\\
s & v=t.
\end{cases}
\]
Therefore, up to graph isomorphisms, one may reduce one of edge-twists $\bar\varepsilon_1,\bar\varepsilon_2,\dots, \bar\varepsilon_n$.

We will provide a systematical way of doing this as follows:
recall that $\Ch_\Gamma$ has the central vertex $*_\Gamma$.
Then we have another orientation on edges of $\Ch_\Gamma$ given by the \emph{away-from-center} convention.
We say that an edge $\varepsilon=(e,C)\in E(\Ch_\Gamma)$ is \emph{outward} or \emph{inward} if $C$ is farther or closer than $e$ from $*_\Gamma$, respectively.
We denote the subset of outward and inward edges in $\Ch_\Gamma$ by $E^\out(\Ch_\Gamma)$ and $E^{\mathsf{in}}(\Ch_\Gamma)$, respectively.

For example, the chunk tree $\Ch_\Gamma$ in Example~\ref{example:chunk graph} has three inward edges $\varepsilon_1, \varepsilon_2, \varepsilon_3$ and four outward edges $\varepsilon_4, \varepsilon_5, \varepsilon_6, \varepsilon_7$.
\begin{align*}
E^{\mathsf{in}}(\Ch_\Gamma)&=\{\varepsilon_1,\varepsilon_2,\varepsilon_3\},&
E^{\out}(\Ch_\Gamma)&=\{\varepsilon_4,\varepsilon_5,\varepsilon_6,\varepsilon_7\}.
\end{align*}

One simple but important observation is as follows: in the chunk tree $\Ch_\Gamma$ and a separating edge $e$, there are at most one inward edge adjacent to $e$.
Indeed, the case without inward edge is when $*_\Gamma=e$ is the central vertex of $\Ch_\Gamma$.

In summary, it suffices to consider edge-twists with respect to outward edges in $\Ch_\Gamma$ since every edge-twist is a combination of those and graph isomorphisms.

\begin{remark}\label{remark:iso}
Even though we consider outward edges only in $\Ch_\Gamma$, there are still possibilities that compositions of edge-twists involving different separating edges become graph isomorphisms as well.
We will consider these cases later in Section~\ref{section:iso}.
\end{remark}

Let $\varepsilon\in E(\Ch_{\Gamma})$ and $(\Gamma_1, e, \Gamma_2)$ be the decomposition corresponding to $\varepsilon$.
For a sake of convenience, we define for each $i=1,2$,
\begin{align*}
\Gamma_i(\varepsilon)\coloneqq \Gamma_i\subset\Gamma\quad\text{ and }\quad
V_i(\varepsilon)\coloneqq V(\Gamma_i) \subset V.
\end{align*}

\begin{lemma}\label{lemma:commutative and involutive}
Edge-twists along outward edges in the chunk tree are commutative and involutive.
\end{lemma}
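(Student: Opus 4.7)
The plan is to verify the two properties separately, each following from the local nature of the edge-twist operation.

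\textbf{Involutivity.} For an outward edge $\varepsilon=(e,C)$, if $m_\Gamma(e)$ is even then $\bar\varepsilon$ leaves $(E_\Gamma,m_\Gamma)$ unchanged by definition, and $\bar\varepsilon\circ\bar\varepsilon=\mathrm{id}$ is immediate. If $m_\Gamma(e)$ is odd, then $\bar\varepsilon$ acts on edges as the involution swapping $\{v,s\}\leftrightarrow\{v,t\}$ for every $v\in\Gamma_2(\varepsilon)\setminus e$ and fixing every other edge, so applying it twice restores the original graph.

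\textbf{Commutativity.} Let $\varepsilon_i=(e_i,C_i)$, $i=1,2$, be outward edges. If $e_1=e_2$, the conclusion is already contained in Equation~\eqref{equation:commutativity of edge-twists}. Otherwise, the first step is to establish that $e_1\cap e_2=\varnothing$: if a vertex $v$ lay in both $e_1$ and $e_2$, it would belong to every chunk bordering $e_1$ and every chunk bordering $e_2$, and since any two chunks meet in at most one separating edge, the chunks through $v$ would be pairwise joined by distinct separating edges in $\Ch_\Gamma$, producing a cycle and contradicting that $\Ch_\Gamma$ is a tree. The same reasoning shows that any given vertex of $\Gamma$ lies in at most two chunks.

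With these combinatorial facts in hand, I split into positional cases in the chunk tree. When $\varepsilon_1$ and $\varepsilon_2$ lie on distinct outward branches from $*_\Gamma$, the subtrees $\Ch_{\Gamma,2}(\varepsilon_i)$ are vertex-disjoint, hence the two twists modify completely disjoint sets of edges and trivially commute. When one edge is nested inside the other, say $\varepsilon_2\in\Ch_{\Gamma,2}(\varepsilon_1)$, one has $\Gamma_2(\varepsilon_2)\subset\Gamma_2(\varepsilon_1)$ and more care is needed. Here I use that any edge moved by $\bar\varepsilon_1$ has one endpoint in $e_1$ and the other in a chunk of $\Ch_{\Gamma,2}(\varepsilon_1)$ containing an endpoint of $e_1$; since at most two chunks contain any given vertex, the only such chunk inside $\Gamma_2(\varepsilon_1)$ is $C_1$. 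Thus the edges moved by $\bar\varepsilon_1$ are confined to $C_1$, and analogously those moved by $\bar\varepsilon_2$ are confined to $C_2$. Because $C_1\cap C_2=e_2$ and $e_1\cap e_2=\varnothing$, the two sets of modified edges are disjoint, yielding commutativity.

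The main obstacle will be the nested case, where $\Gamma_2(\varepsilon_2)\subset\Gamma_2(\varepsilon_1)$ superficially suggests that the twists interact. The resolution rests on the tree structure of $\Ch_\Gamma$, which forces each twist's modification zone to sit inside a single outward chunk and thereby prevents any overlap between twists along distinct separating edges.
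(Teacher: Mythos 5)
The involutivity half is fine and matches the paper. The commutativity half, however, rests on a false combinatorial claim.

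You assert that two distinct separating edges $e_1,e_2$ must be disjoint, arguing that a shared vertex $v$ would belong to every chunk bordering $e_1$ and every chunk bordering $e_2$, and that these chunks would then be ``pairwise joined by distinct separating edges,'' producing a cycle in $\Ch_\Gamma$. The final step is the gap: two chunks sharing the vertex $v$ need not share a separating edge at all --- they can intersect in the single vertex $v$, which is not an edge. Concretely, take three $4$-cycles $C_1$ on $\{v,w,a,b\}$, $C_2$ on $\{v,w,x,y\}$, and $C_3$ on $\{w,x,c,d\}$, glued so that $C_1\cap C_2=\{v,w\}=e_1$ and $C_2\cap C_3=\{w,x\}=e_2$ (an edge-separated CLTTF graph once all labels are $\ge3$). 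Then $e_1\neq e_2$ but $e_1\cap e_2=\{w\}$, and $C_1\cap C_3=\{w\}$ is \emph{not} a separating edge, so the chunk tree is just the path $C_1-e_1-C_2-e_2-C_3$ and no cycle appears. The same example shows $w$ lies in three chunks, so your auxiliary claim that a vertex lies in at most two chunks is also false. Moreover, the assertion in the nested case that the edges moved by $\bar\varepsilon_1$ ``are confined to $C_1$'' fails in general: $\Gamma_2(\varepsilon_1)$ can contain several chunks, and a chunk $D\subsetneq\Gamma_2(\varepsilon_1)$ other than $C_1$ can contain one endpoint of $e_1$, so $\bar\varepsilon_1$ can modify edges of $D$.

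Because $e_1\cap e_2\neq\varnothing$ with $e_1\neq e_2$ is a genuine possibility, your case analysis is incomplete. The paper's proof treats this case explicitly: after disposing of $e=e'$ and $e\cap e'=\varnothing$, it splits the remaining case into three subcases according to whether $C\subset\Gamma_2(\varepsilon')$ and whether $C'\subset\Gamma_2(\varepsilon)$, and verifies commutativity in each by a direct check of how the two twists interact on edges emanating from the shared vertex. To repair your argument you would need to drop the disjointness claim and add an analysis of exactly this shared-vertex situation.
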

\begin{proof}
By definition of edge-twists, it is obvious that for $\varepsilon\in E^{\out}(\Ch_{\llbracket\Gamma\rrbracket})$ and any $\Delta\sim \Gamma$,
\begin{equation}\label{equation:involutive}
\bar\varepsilon^2(\Delta) = (\bar\varepsilon\bar\varepsilon)(\Delta) = \Delta
\end{equation}
and so edge-twists along outward edges are involutive.

Let $\varepsilon=(e,C), \varepsilon'=(e',C') \in E^{\out}(\Ch_{\llbracket\Gamma\rrbracket})$.
If $e=e'$ or $e\cap e'=\emptyset$, then corresponding edge-twists will commute.
Otherwise, there are three cases 
(i) $C\not\subset \Gamma_2(\varepsilon')\text{ and }C'\not\subset \Gamma_2(\varepsilon)$,
(ii) $C\not\subset \Gamma_2(\varepsilon')\text{ and }C'\subset \Gamma_2(\varepsilon)$, and
(iii) $C\subset \Gamma_2(\varepsilon')\text{ and }C'\not\subset \Gamma_2(\varepsilon)$,
according to whether
$C\subset\Gamma_2(\varepsilon')$ or $C'\subset\Gamma_2(\varepsilon)$ for decomposition $(\Gamma_1(\varepsilon), e, \Gamma_2(\varepsilon))$ and $(\Gamma_1(\varepsilon'), e', \Gamma_2(\varepsilon'))$ induced from $\varepsilon$ and $\epsilon'$, respectively.

Notice that when $C\subset\Gamma_2(\varepsilon')$ and $C'\subset\Gamma_2(\varepsilon)$, then $C=C'$ and at least one of $\varepsilon$ and $\varepsilon'$ should be \emph{inward}.

For each case, edge-twists are commutative as shown in Figure~\ref{figure:commutative edge-twists} and we omit the detail.
\end{proof}

\begin{remark}\label{remark:scopes are disjoint or nested}
We remark that for any $\varepsilon, \varepsilon'\in E^{\out}(\Ch_{\llbracket\Gamma\rrbracket})$, two sets $V\setminus V_1(\varepsilon)$ and $V\setminus V_1(\varepsilon')$ are either disjoint or nested.
\end{remark}

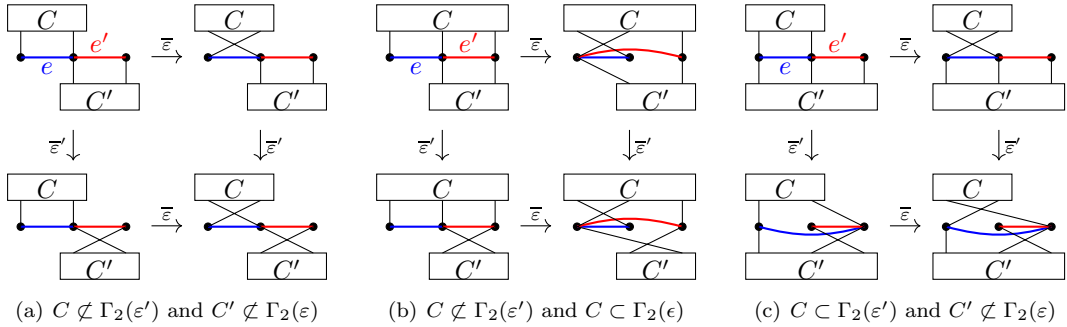
\begin{figure}[ht]
\subfigure[\label{figure:commutative edge-twists 1}$C\not\subset \Gamma_2(\varepsilon')$ and $C'\not\subset \Gamma_2(\varepsilon)$]{
$
\begin{tikzcd}[ampersand replacement=\&, column sep=1pc, row sep=1pc]
\begin{tikzpicture}[baseline=-.5ex,scale=0.7]
\draw[fill] (-1,0) circle (2pt) (0,0) circle (2pt) (1,0) circle (2pt);
\draw[blue, thick] (-1,0) -- node[midway, below=-.5ex] {$e$} (0,0);
\draw[red, thick] (0,0) -- node[midway, above=-.5ex] {$e'$} (1,0);
\draw (-1,0) -- +(0,0.5) (0,0) -- +(0,0.5) (0,0) -- +(0,-0.5) (1,0) -- +(0,-0.5);
\draw (-1.25,0.5) rectangle node[below=-1.5ex] {$C$} (0.25, 1);
\draw (1.25,-0.5) rectangle node[below=-1.5ex] {$C'$} (-0.25, -1);
\end{tikzpicture}
\arrow[r, "\bar\varepsilon"]
\arrow[d, "\bar\varepsilon'"']
\&
\begin{tikzpicture}[baseline=-.5ex,scale=0.7]
\draw[fill] (-1,0) circle (2pt) (0,0) circle (2pt) (1,0) circle (2pt);
\draw[blue, thick] (-1,0) -- (0,0);
\draw[red, thick] (0,0) -- (1,0);
\draw (-1,0) -- +(1,0.5) (0,0) -- +(-1,0.5) (0,0) -- +(0,-0.5) (1,0) -- +(0,-0.5);
\draw (-1.25,0.5) rectangle node[below=-1.5ex] {$C$} (0.25, 1);
\draw (1.25,-0.5) rectangle node[below=-1.5ex] {$C'$} (-0.25, -1);
\end{tikzpicture}
\arrow[d, "\bar\varepsilon'"]
\\
\begin{tikzpicture}[baseline=-.5ex,scale=0.7]
\draw[fill] (-1,0) circle (2pt) (0,0) circle (2pt) (1,0) circle (2pt);
\draw[blue, thick] (-1,0) -- (0,0);
\draw[red, thick] (0,0) -- (1,0);
\draw (-1,0) -- +(0,0.5) (0,0) -- +(0,0.5) (0,0) -- +(1,-0.5) (1,0) -- +(-1,-0.5);
\draw (-1.25,0.5) rectangle node[below=-1.5ex] {$C$} (0.25, 1);
\draw (1.25,-0.5) rectangle node[below=-1.5ex] {$C'$} (-0.25, -1);
\end{tikzpicture}
\arrow[r, "\bar\varepsilon"]
\&
\begin{tikzpicture}[baseline=-.5ex,scale=0.7]
\draw[fill] (-1,0) circle (2pt) (0,0) circle (2pt) (1,0) circle (2pt);
\draw[blue, thick] (-1,0) -- (0,0);
\draw[red, thick] (0,0) -- (1,0);
\draw (-1,0) -- +(1,0.5) (0,0) -- +(-1,0.5) (0,0) -- +(1,-0.5) (1,0) -- +(-1,-0.5);
\draw (-1.25,0.5) rectangle node[below=-1.5ex] {$C$} (0.25, 1);
\draw (1.25,-0.5) rectangle node[below=-1.5ex] {$C'$} (-0.25, -1);
\end{tikzpicture}
\end{tikzcd}
$
}
\subfigure[\label{figure:commutative edge-twists 2}$C\not\subset \Gamma_2(\varepsilon')$ and $C\subset\Gamma_2(\epsilon)$]{
$
\begin{tikzcd}[ampersand replacement=\&, column sep=1pc, row sep=1pc]
\begin{tikzpicture}[baseline=-.5ex,scale=0.7]
\draw[fill] (-1,0) circle (2pt) (0,0) circle (2pt) (1,0) circle (2pt);
\draw[blue, thick] (-1,0) -- node[midway, below=-.5ex] {$e$} (0,0);
\draw[red, thick] (0,0) -- node[midway, above=-.5ex] {$e'$} (1,0);
\draw (-1,0) -- +(0,0.5) (0,0) -- +(0,0.5) (0,0) -- +(0,-0.5) (1,0) -- +(0,0.5) (1,0) -- +(0,-0.5);
\draw (-1.25,0.5) rectangle node[below=-1.5ex] {$C$} (1.25, 1);
\draw (1.25,-0.5) rectangle node[below=-1.5ex] {$C'$} (-0.25, -1);
\end{tikzpicture}
\arrow[r, "\bar\varepsilon"]
\arrow[d, "\bar\varepsilon'"']
\&
\begin{tikzpicture}[baseline=-.5ex,scale=0.7]
\draw[fill] (-1,0) circle (2pt) (0,0) circle (2pt) (1,0) circle (2pt);
\draw[blue, thick] (-1,0) -- (0,0);
\draw[red, thick] (-1,0) to[out=15,in=165] (1,0);
\draw (-1,0) -- +(1,0.5) (0,0) -- +(-1,0.5) (-1,0) -- +(1,-0.5) (1,0) -- +(0,0.5) (1,0) -- +(0,-0.5);
\draw (-1.25,0.5) rectangle node[below=-1.5ex] {$C$} (1.25, 1);
\draw (1.25,-0.5) rectangle node[below=-1.5ex] {$C'$} (-0.25, -1);
\end{tikzpicture}
\arrow[d, "\bar\varepsilon'"]
\\
\begin{tikzpicture}[baseline=-.5ex,scale=0.7]
\draw[fill] (-1,0) circle (2pt) (0,0) circle (2pt) (1,0) circle (2pt);
\draw[blue, thick] (-1,0) -- (0,0);
\draw[red, thick] (0,0) -- (1,0);
\draw (-1,0) -- +(0,0.5) (0,0) -- +(0,0.5) (0,0) -- +(1,-0.5) (1,0) -- +(0,0.5) (1,0) -- +(-1,-0.5);
\draw (-1.25,0.5) rectangle node[below=-1.5ex] {$C$} (1.25, 1);
\draw (1.25,-0.5) rectangle node[below=-1.5ex] {$C'$} (-0.25, -1);
\end{tikzpicture}
\arrow[r, "\bar\varepsilon"]
\&
\begin{tikzpicture}[baseline=-.5ex,scale=0.7]
\draw[fill] (-1,0) circle (2pt) (0,0) circle (2pt) (1,0) circle (2pt);
\draw[blue, thick] (-1,0) -- (0,0);
\draw[red, thick] (-1,0) to[out=15,in=165] (1,0);
\draw (-1,0) -- +(1,0.5) (0,0) -- +(-1,0.5) (-1,0) -- +(2,-0.5) (1,0) -- +(0,0.5) (1,0) -- +(-1,-0.5);
\draw (-1.25,0.5) rectangle node[below=-1.5ex] {$C$} (1.25, 1);
\draw (1.25,-0.5) rectangle node[below=-1.5ex] {$C'$} (-0.25, -1);
\end{tikzpicture}
\end{tikzcd}
$
}
\subfigure[\label{figure:commutative edge-twists 3}$C\subset \Gamma_2(\varepsilon')$ and $C'\not\subset \Gamma_2(\varepsilon)$]{
$
\begin{tikzcd}[ampersand replacement=\&, column sep=1pc, row sep=1pc]
\begin{tikzpicture}[baseline=-.5ex,scale=0.7]
\draw[fill] (-1,0) circle (2pt) (0,0) circle (2pt) (1,0) circle (2pt);
\draw[blue, thick] (-1,0) -- node[midway, below=-.5ex] {$e$} (0,0);
\draw[red, thick] (0,0) -- node[midway, above=-.5ex] {$e'$} (1,0);
\draw (-1,0) -- +(0,0.5) (-1,0) -- +(0,-0.5) (0,0) -- +(0,0.5) (0,0) -- +(0,-0.5) (1,0) -- +(0,-0.5);
\draw (-1.25,0.5) rectangle node[below=-1.5ex] {$C$} (0.25, 1);
\draw (1.25,-0.5) rectangle node[below=-1.5ex] {$C'$} (-1.25, -1);
\end{tikzpicture}
\arrow[r, "\bar\varepsilon"]
\arrow[d, "\bar\varepsilon'"']
\&
\begin{tikzpicture}[baseline=-.5ex,scale=0.7]
\draw[fill] (-1,0) circle (2pt) (0,0) circle (2pt) (1,0) circle (2pt);
\draw[blue, thick] (-1,0) -- (0,0);
\draw[red, thick] (0,0) -- (1,0);
\draw (-1,0) -- +(1,0.5) (-1,0) -- +(0,-0.5) (0,0) -- +(-1,0.5) (0,0) -- +(0,-0.5) (1,0) -- +(0,-0.5);
\draw (-1.25,0.5) rectangle node[below=-1.5ex] {$C$} (0.25, 1);
\draw (1.25,-0.5) rectangle node[below=-1.5ex] {$C'$} (-1.25, -1);
\end{tikzpicture}
\arrow[d, "\bar\varepsilon'"]
\\
\begin{tikzpicture}[baseline=-.5ex,scale=0.7]
\draw[fill] (-1,0) circle (2pt) (0,0) circle (2pt) (1,0) circle (2pt);
\draw[blue, thick] (-1,0) to[out=-15,in=-165] (1,0);
\draw[red, thick] (0,0) -- (1,0);
\draw (-1,0) -- +(0,0.5) (-1,0) -- +(0,-0.5) (1,0) -- +(-1,0.5) (0,0) -- +(1,-0.5) (1,0) -- +(-1,-0.5);
\draw (-1.25,0.5) rectangle node[below=-1.5ex] {$C$} (0.25, 1);
\draw (1.25,-0.5) rectangle node[below=-1.5ex] {$C'$} (-1.25, -1);
\end{tikzpicture}
\arrow[r, "\bar\varepsilon"]
\&
\begin{tikzpicture}[baseline=-.5ex,scale=0.7]
\draw[fill] (-1,0) circle (2pt) (0,0) circle (2pt) (1,0) circle (2pt);
\draw[blue, thick] (-1,0) to[out=-15,in=-165] (1,0);
\draw[red, thick] (0,0) -- (1,0);
\draw (-1,0) -- +(1,0.5) (-1,0) -- +(0,-0.5) (1,0) -- +(-2,0.5) (0,0) -- +(1,-0.5) (1,0) -- +(-1,-0.5);
\draw (-1.25,0.5) rectangle node[below=-1.5ex] {$C$} (0.25, 1);
\draw (1.25,-0.5) rectangle node[below=-1.5ex] {$C'$} (-1.25, -1);
\end{tikzpicture}
\end{tikzcd}
$
}
\caption{Commutativity of edge-twists}
\label{figure:commutative edge-twists}
\end{figure}

Let us define two sets $E^\out_{\even}(\Ch_{\llbracket\Gamma\rrbracket})$ and $E^\out_{\odd}(\Ch_{\llbracket\Gamma\rrbracket})$ as the subsets of $E^\out(\Ch_{\llbracket\Gamma\rrbracket})$ consisting of $\varepsilon=(e, C)$ with $m(e)$ even and odd, respectively.
For each function $\bar\eta:E^\out_\odd(\Ch_{\llbracket\Gamma\rrbracket})\to\mathbb{Z}_2$, we define the composition $\bar{\mathcal{E}}(\bar\eta)$ of edge-twists as
\begin{align*}
\bar{\mathcal{E}}(\bar\eta) &= \prod_{\varepsilon\in E^{\out}_\odd(\Ch_{\llbracket\Gamma\rrbracket})} \bar\varepsilon^{\bar\eta(\varepsilon)}.
\end{align*}

\begin{proposition}\label{proposition:normal form for graphs}
Let $\Delta$ be a \CLTTF graph edge-twist equivalent to $\Gamma$.
Then there exists a unique function $\bar\eta:E^{\out}_\odd(\Ch_{\llbracket\Gamma\rrbracket})\to \mathbb{Z}_2$ such that 
$\bar{\mathcal{E}} = \bar{\mathcal{E}}(\bar\eta):\Gamma\to\Delta$.
\end{proposition}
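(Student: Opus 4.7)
The plan is to show that the map $\bar\eta \mapsto \bar{\mathcal{E}}(\bar\eta)(\Gamma)$ from $(\mathbb{Z}_2)^{E^{\out}_\odd(\Ch_{\llbracket\Gamma\rrbracket})}$ to $\llbracket\Gamma\rrbracket$ is a bijection. By Lemma~\ref{lemma:commutative and involutive} the outward odd twists pairwise commute and are involutions, so $\bar{\mathcal{E}}(\bar\eta)$ is well-defined and satisfies $\bar{\mathcal{E}}(\bar\eta_1+\bar\eta_2)=\bar{\mathcal{E}}(\bar\eta_1)\circ\bar{\mathcal{E}}(\bar\eta_2)$. Consequently, uniqueness reduces to the implication $\bar{\mathcal{E}}(\bar\eta)(\Gamma)=\Gamma\Rightarrow\bar\eta=0$, and existence reduces to surjectivity onto $\llbracket\Gamma\rrbracket$.

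For uniqueness I would use the laminar structure of Remark~\ref{remark:scopes are disjoint or nested}. Assuming $\bar\eta\neq 0$, pick $\varepsilon_0=(e_0,C_0)$ with $\bar\eta(\varepsilon_0)=1$ whose scope $V_2(\varepsilon_0)\setminus V_1(\varepsilon_0)$ is minimal in $\{V_2(\varepsilon)\setminus V_1(\varepsilon):\bar\eta(\varepsilon)=1\}$. Writing $e_0=\{s_0,t_0\}$, this scope contains $V(C_0)\setminus\{s_0,t_0\}$; since $C_0$ has at least three vertices, is connected, and $\Gamma$ is triangle-free, there exists $v\in V(C_0)\setminus\{s_0,t_0\}$ adjacent in $\Gamma$ to exactly one of $s_0,t_0$, so $\bar\varepsilon_0$ nontrivially swaps $\{v,s_0\}$ with $\{v,t_0\}$. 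For any other $\varepsilon_1$ with $\bar\eta(\varepsilon_1)=1$, laminarity forces its scope to be either disjoint from that of $\varepsilon_0$ (and hence $v$ lies outside its influence) or strictly containing it; in the latter case $e_1$ is a separating edge strictly closer to the center $*_\Gamma$ than $e_0$, so the endpoints $\{s_1,t_1\}$ are distinct from $\{s_0,t_0\}$ and the modifications of $\bar\varepsilon_1$ do not touch $\{v,s_0\},\{v,t_0\}$. Hence this pair is flipped exactly once by $\bar{\mathcal{E}}(\bar\eta)$, contradicting $\bar{\mathcal{E}}(\bar\eta)(\Gamma)=\Gamma$.

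For existence, given $\Delta\sim\Gamma$ I take any sequence of edge-twists carrying $\Gamma$ to $\Delta$. By the paragraph preceding the proposition this sequence reduces to a composition of chunk-tree edge-twists, and even-labeled twists may be dropped. Inward twists are then eliminated using the identity $\bar\varepsilon_1\cdots\bar\varepsilon_n=\alpha_e$ at each separating edge $e$ with chunks $C_1,\ldots,C_n$, which writes an inward twist as the product of the outward twists at $e$ with the endpoint-swap $\alpha_e$. Propagating the accumulated swaps through subsequent twists, noting that conjugation by $\alpha_e$ sends an outward twist to an outward twist (with the same chunk-tree edge), and collecting factors using commutativity-involutivity (Lemma~\ref{lemma:commutative and involutive}), one obtains $\Delta$ in the form $\bar{\mathcal{E}}(\bar\eta)(\Gamma)$ for some $\bar\eta$.

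The main obstacle is the existence step, specifically controlling the accumulated product of endpoint-swaps produced by the inward-twist rewriting and verifying that it is absorbed into the outward-twist normal form rather than contributing extra degrees of freedom. A cleaner alternative, which I would pursue if the direct rewriting proves too intricate, is to prove surjectivity by induction along the chunk tree: working outward from the center $*_\Gamma$, at each outward chunk-tree edge $\varepsilon=(e,C)$ compare the relevant portion of $\Gamma$ and $\Delta$ and define $\bar\eta(\varepsilon)$ accordingly, using Remark~\ref{remark:scopes are disjoint or nested} to ensure that the choice at each edge does not disturb choices already made at strictly smaller scopes.
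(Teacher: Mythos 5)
Your uniqueness argument is essentially the paper's: both select a twist $\varepsilon_0=(e_0,C_0)$ with $\bar\eta(\varepsilon_0)=1$ that is farthest from the center (your ``minimal scope'' is the same choice in the laminar ordering), take a vertex $v\in C_0\setminus e_0$ adjacent to one endpoint of $e_0$, and derive a triangle contradiction from the fact that the composition must return $\Gamma$ to itself. For existence, the paper simply invokes the preceding discussion (``the existence is obvious''), whereas you make that reduction explicit via the identity $\bar\varepsilon_1\cdots\bar\varepsilon_n=\alpha_e$ and the alternative chunk-tree induction --- a genuine elaboration, but the same idea the paper relies on implicitly; so I would call this the same route.

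One caveat you should be aware of, though it also applies to the paper's own one-line argument: in the step where you argue that a twist $\varepsilon_1$ with strictly larger scope ``does not touch $\{v,s_0\},\{v,t_0\}$'' because $\{s_1,t_1\}\neq\{s_0,t_0\}$, distinctness of the \emph{sets} does not by itself exclude the possibility $e_1\cap e_0\neq\varnothing$. If, say, $t_0\in e_1$, then $\bar\varepsilon_1$ genuinely moves the pair $\{v,t_0\}$, and the ``flipped exactly once'' claim fails as stated; one must instead track the image $\{v,x\}$ through the whole composition and argue that $x$ can never return to a neighbour of $v$ inside $C_0$ once it has left $e_0$, so that the result still cannot be an edge of $\Gamma$. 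This is a repairable gap, and the paper's own proof (``by definition of edge-twist, $\{v,t\}\in E_C$'') skips exactly the same point, but you should not present the claim about $\bar\varepsilon_1$ not touching the pair as if it followed immediately from set-inequality of the separating edges.
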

\begin{proof}
By definition of edge-twist equivalence, the existence is obvious.
Let $\bar\eta$ and $\bar\eta'$ be two such functions. That is,
\[
\bar{\mathcal{E}}=\bar{\mathcal{E}}(\bar\eta):\Gamma\to \Delta\quad\text{ and }\quad
\bar{\mathcal{E}}'=\bar{\mathcal{E}}(\bar\eta'):\Gamma\to \Delta.
\]

We define $(\bar\eta+\bar\eta'):E^{\out}_\odd(\Ch_{\llbracket\Gamma\rrbracket})\to\mathbb{Z}_2$ as the point-wise addition over $\mathbb{Z}_2$
\[
(\bar\eta+\bar\eta')(\varepsilon) \coloneqq 
\bar\eta(\varepsilon)+\bar\eta'(\varepsilon)=
\begin{cases}
0 & \bar\eta(\varepsilon) = \bar\eta'(\varepsilon);\\
1 & \bar\eta(\varepsilon) \neq \bar\eta'(\varepsilon),
\end{cases}
\]
and then by Lemma~\ref{lemma:commutative and involutive}, we have
\[
\bar{\mathcal{E}}'^{-1}\bar{\mathcal{E}} = \bar{\mathcal{E}}(\bar\eta+\bar\eta'):\Gamma\to\Gamma.
\]

Now let $\varepsilon=(e=\{s,t\},C)\in E^{\out}_\odd(\Ch_{\llbracket\Gamma\rrbracket})$ be one of the farthest edge from the center $*_{\llbracket\Gamma\rrbracket}$ with $(\bar\eta+\bar\eta')(\varepsilon)=1$.
Pick an edge $\{v,s\}\in E_C$ with $v\neq s,t$.
Then by definition of edge-twist, $\{v,t\}\in E_C$.
This is a contradiction since the set $\{v,s,t\}$ of vertices forms a triangle but $\Gamma$ is triangle-free.
Hence there are no such $\varepsilon$ and so $(\bar\eta+\bar\eta')\equiv 0$, or equivalently, $\bar\eta$ coincides with $\bar\eta'$.
\end{proof}

We consider the following rigidities of \CLTTF graphs.

\begin{definition}[Rigidity of \CLTTF graphs]
A \CLTTF graph $\Gamma$ is said to be 
\begin{enumerate}
\item \emph{rigid} if 
\[
\Gamma\sim \Delta \Longrightarrow \Gamma\cong \Delta,
\]
\item \emph{discretely rigid} if
\[
\Gamma\sim \Delta\text{ and }\Gamma\cong\Delta\Longrightarrow \Gamma=\Delta.
\]
\end{enumerate}
\end{definition}

\begin{remark}
One can see these rigidity as follows: rigid if and only if $\llbracket\Gamma\rrbracket$ up to graph isomorphism is a singleton, and discretely rigid if $\llbracket\Gamma\rrbracket$ up to graph isomorphism is the same as $\llbracket\Gamma\rrbracket$ itself.
\end{remark}

There are examples of rigid but not discretely rigid \CLTTF graphs, and \textit{vice versa}.

\begin{example}
The \CLTTF graph $\Gamma$ below is rigid but not discretely rigid, while $\Delta$ is discretely rigid but not rigid.
\begin{align*}
\Gamma&=\begin{tikzpicture}[baseline=-.5ex]
\foreach \i in {-1,0,1} {
\draw[fill] (\i,-0.5) circle (2pt) (\i, 0.5) circle (2pt);
}
\draw (-1,-0.5) node[below left] {$a$} -- node[midway, left] {$\scriptstyle4$} (-1, 0.5) node[above left] {$b$};
\draw (0,-0.5) node[below] {$f$} -- node[midway, right] {$\scriptstyle3$} (0, 0.5) node[above] {$c$};
\draw (1,-0.5) node[below right] {$e$} -- node[midway, right] {$\scriptstyle4$} (1, 0.5) node[above right] {$d$};
\draw (-1,-0.5) -- node[midway, below] {$\scriptstyle4$} (0, -0.5);
\draw (-1,0.5) -- node[midway, above] {$\scriptstyle6$} (0, 0.5);
\draw (0,-0.5) -- node[midway, below] {$\scriptstyle6$} (1, -0.5);
\draw (0,0.5) -- node[midway, above] {$\scriptstyle6$} (1, 0.5);
\end{tikzpicture}&
\Delta&=\begin{tikzpicture}[baseline=-.5ex]
\foreach \i in {-1.5,-0.5,0.5,1.5} {
\draw[fill] (\i,-0.5) circle (2pt) (\i, 0.5) circle (2pt);
}
\draw (-1.5,-0.5) node[below left] {$a$} -- node[midway, left] {$\scriptstyle4$} (-1.5, 0.5) node[above left] {$b$};
\draw (-0.5,-0.5) node[below] {$h$} -- node[midway, right] {$\scriptstyle3$} (-0.5, 0.5) node[above] {$c$};
\draw (0.5,-0.5) node[below] {$g$} -- node[midway, right] {$\scriptstyle3$} (0.5, 0.5) node[above] {$d$};
\draw (1.5,-0.5) node[below right] {$f$} -- node[midway, right] {$\scriptstyle4$} (1.5, 0.5) node[above right] {$e$};
\draw (-1.5,-0.5) -- node[midway, below] {$\scriptstyle4$} (-0.5, -0.5);
\draw (-1.5,0.5) -- node[midway, above] {$\scriptstyle6$} (-0.5, 0.5);
\draw (-0.5,-0.5) -- node[midway, below] {$\scriptstyle4$} (0.5, -0.5);
\draw (-0.5,0.5) -- node[midway, above] {$\scriptstyle6$} (0.5, 0.5);
\draw (0.5,-0.5) -- node[midway, below] {$\scriptstyle4$} (1.5, -0.5);
\draw (0.5,0.5) -- node[midway, above] {$\scriptstyle6$} (1.5, 0.5);
\end{tikzpicture}
\end{align*}
\end{example}

\begin{lemma}
Let $\Gamma=(V,E,m)$ be a rigid and discretely rigid \CLTTF graph.
Then for each separating edge $e$, the label $m(e)$ is even.
\end{lemma}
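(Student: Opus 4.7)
The plan is to argue by contradiction. Assume there is a separating edge $e=\{s,t\}$ with $m(e)$ odd. The two rigidity hypotheses together force that any $\Delta$ with $\Delta\sim\Gamma$ satisfies $\Delta=\Gamma$ as labeled graphs on $V$; so it is enough to exhibit a single edge-twist $\bar\varepsilon\colon\Gamma\to\Delta$ for which $\Delta\ne\Gamma$.

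First I locate a suitable outward edge $\varepsilon=(e,C)\in E^{\out}(\Ch_\Gamma)$. Since $e$ is separating, it lies in at least two chunks, so its valence in $\Ch_\Gamma$ is at least two; and among the edges of $\Ch_\Gamma$ incident to $e$, at most one can be inward (the unique one heading toward the center $*_\Gamma$). Hence some $(e,C)$ is outward, and I apply the edge-twist $\bar\varepsilon\colon\Gamma\to\Delta$ with respect to the decomposition $(\Gamma_1(\varepsilon),e,\Gamma_2(\varepsilon))$, where by construction $C\subset\Gamma_2(\varepsilon)$ and so $V(C)\subset V_2(\varepsilon)$. By definition $\Gamma\sim\Delta$.

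The final step is to verify $\Delta\ne\Gamma$. Since $C$ is itself a \CLTTF graph with at least three vertices and $e\subset C$, connectivity of $C$ yields some $v\in V(C)\setminus\{s,t\}$ adjacent in $C$ to an endpoint of $e$; say $\{v,s\}\in E_\Gamma$. Triangle-freeness forbids $\{v,t\}\in E_\Gamma$. Because $m(e)$ is odd and $v\in V_2(\varepsilon)$, the definition of edge-twist swaps the roles of $s$ and $t$ at $v$: it replaces $\{v,s\}$ by $\{v,t\}$. Thus $\{v,t\}\in E_\Delta$ while $\{v,s\}\notin E_\Delta$, so $E_\Delta\ne E_\Gamma$ and $\Delta\ne\Gamma$. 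This contradicts the combined consequence of rigidity and discrete rigidity, forcing $m(e)$ to be even.

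The main obstacle is essentially bookkeeping: one must combine the two rigidity conditions into the single strong statement ``$\Gamma\sim\Delta\Rightarrow\Gamma=\Delta$'', and then ensure that the chosen edge-twist genuinely alters the edge set. Both steps are short and rely only on ingredients already established in the excerpt (the chunk tree has a center and is bipartite, separating edges have valence $\ge 2$ in $\Ch_\Gamma$, chunks have at least three vertices, and $\Gamma$ is triangle-free).
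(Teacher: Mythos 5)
Your proof is correct and follows the same route as the paper's (very terse) argument: combine the two rigidities into ``$\Gamma\sim\Delta\Rightarrow\Gamma=\Delta$'' and then observe that an edge-twist along an odd-labeled separating edge produces a genuinely different graph by triangle-freeness. The paper simply defers the second step to an earlier remark, whereas you spell it out in full (choosing an outward $(e,C)$, locating a neighbor $v$ of an endpoint of $e$ inside $C$, and tracking how the edge set changes), which is a worthwhile elaboration but not a different approach.
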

\begin{proof}
Since $\Gamma$ is rigid and discretely rigid, $\Gamma\sim\Delta$ implies $\Gamma=\Delta$.
However, if there is an odd-labelled edge $e\in E$, then an edge-twist involving $e$ yields an edge-twist equivalent graph $\Delta$ different from $\Gamma$ as mentioned earlier. This contradiction completes the proof.
\end{proof}

\subsection{pull-backs and push-forwards}
(a) Let $\alpha:\Gamma\to\Delta$ be a graph isomorphism and $\varepsilon=(e,C)\in E^\out(\Ch_{\Gamma})$ with $\Gamma'=\bar\varepsilon(\Gamma)$.
We define the \emph{push-forwards} $\alpha_*(\bar\varepsilon):\Delta\to\Delta'$ of $\bar\varepsilon$ via $\alpha$ as the edge-twist $\alpha_*(\bar\varepsilon)\coloneqq\overline{\alpha_*(\varepsilon)}$, where
\[
\alpha_*(\varepsilon)\coloneqq\Ch(\alpha)(\varepsilon) = (\alpha(e), \alpha(C))\in E^{\out}(\Ch_\Delta).
\]
Then it is obvious that $\alpha\in\mathfrak{S}_V$ induces a graph isomorphism $\alpha:\Gamma'\to\Delta'$, which fits into the diagram in Figure~\ref{figure:push-forward 1}.

\noindent (b) Let $\bar\varepsilon'=\overline{(e',C')}:\Delta\to \Delta'$ be an edge-twist.
Then the \emph{pull-back} $\alpha^*(\bar\varepsilon')$ of $\varepsilon'$ via $\alpha$ is defined as the edge-twist $\alpha^*(\bar\varepsilon)\coloneqq\overline{\alpha^*(\varepsilon)}$, where
\[
\alpha^*(\varepsilon') = \alpha^{-1}_*(\varepsilon')=(\alpha^{-1}(e'), \alpha^{-1}(C))\in E^\out(\Ch_\Gamma).
\]
Then as before, the permutation $\alpha'\in\mathfrak{S}_V$ induces a graph isomorphism $\alpha:\Gamma'\to\Delta'$. See Figure~\ref{figure:push-forward 2}.

\noindent (c) For a graph isomorphism $\alpha':\Gamma'\to\Delta'$ and an edge-twist $\bar\varepsilon:\Gamma\to\Gamma'$, we define $\Delta = \alpha'(\Gamma)$.
Then the push-forward $\alpha_*(\bar\varepsilon):\Delta\to\Delta'$ is the edge-twist, which fits into the diagram in Figure~\ref{figure:push-forward 3}.

\noindent (d) For an edge-twist $\bar\varepsilon':\Delta\to\Delta'$, by using the inverse $\alpha'^{-1}$ as before, there exist a graph $\Gamma=\alpha^{-1}(\Delta')$ and an edge-twist $\alpha^*(\bar\varepsilon'):\Gamma\to\Gamma'$. See Figure~\ref{figure:figure:push-forward 4}

\begin{figure}[ht]
\subfigure[\label{figure:push-forward 1}]{
$\begin{tikzcd}[ampersand replacement=\&]
\Gamma\arrow[r, "\alpha"]\arrow[d, "\bar\varepsilon"'] \& \Delta\\
\Gamma'
\end{tikzcd}
\Longrightarrow
\begin{tikzcd}[ampersand replacement=\&]
\Gamma\arrow[r, "\alpha"] \arrow[d, "\bar\varepsilon"']\& \Delta \arrow[d, "\alpha_*(\bar\varepsilon)"]\\
\Gamma'\arrow[r, "\alpha"] \& \alpha(\Gamma')
\end{tikzcd}$
}
\subfigure[\label{figure:push-forward 2}]{
$\begin{tikzcd}[ampersand replacement=\&]
\Gamma\arrow[r, "\alpha"] \& \Delta\arrow[d, "\bar\varepsilon'"]\\
\& \Delta'
\end{tikzcd}
\Longrightarrow
\begin{tikzcd}[ampersand replacement=\&]
\Gamma\arrow[r, "\alpha"] \arrow[d, "\alpha^*(\bar\varepsilon')"']\& \Delta \arrow[d, "\bar\varepsilon'"]\\
\alpha^{-1}(\Delta')\arrow[r, "\alpha"] \& \Delta'
\end{tikzcd}$
}
\subfigure[\label{figure:push-forward 3}]{
$\begin{tikzcd}[ampersand replacement=\&]
\Gamma\arrow[d, "\varepsilon"'] \& \\
\Gamma'\arrow[r, "\alpha'"] \& \Delta'
\end{tikzcd}
\Longrightarrow
\begin{tikzcd}[ampersand replacement=\&]
\Gamma\arrow[r, "\alpha'"] \arrow[d, "\bar\varepsilon"']\& \alpha'(\Gamma) \arrow[d, "\alpha'_*(\bar\varepsilon)"]\\
\Gamma'\arrow[r, "\alpha'"] \& \Delta'
\end{tikzcd}$
}
\subfigure[\label{figure:figure:push-forward 4}]{
$\begin{tikzcd}[ampersand replacement=\&]
\& \Delta\arrow[d, "\bar\varepsilon'"] \\
\Gamma'\arrow[r, "\alpha'"] \& \Delta'
\end{tikzcd}
\Longrightarrow
\begin{tikzcd}[ampersand replacement=\&]
\alpha'^{-1}(\Delta)\arrow[r, "\alpha'"] \arrow[d, "\alpha^*(\bar\varepsilon')"']\& \Delta \arrow[d, "\bar\varepsilon'"]\\
\Gamma'\arrow[r, "\alpha'"] \& \Delta'
\end{tikzcd}$
}
\caption{Push-forwards and pull-backs}
\label{figure:push-forwards and pull-backs}
\end{figure}

\begin{example}\label{example:push-forward}
Recall the graph $\Gamma$ in Example~\ref{example:chunk graph}.
Let $(\alpha_0:\Gamma\to\Gamma)\in\Aut(\Gamma)$ be a graph automorphism which swiches vertices $j$ and $k$ with $\ell$ and $m$, 
\begin{align*}
\alpha_0(j)&=\ell,&
\alpha_0(\ell)&=j,&
\alpha_0(k)&=m,&
\alpha_0(m)&=k,&
\end{align*}
and let $\bar\varepsilon=\overline{(e_2,C_2)}:\Gamma\to\Gamma'$ be an edge-twist.
Then we have a \CLTTF graph $\Delta'=\alpha_0(\Gamma')$ and an edge-twist $(\alpha_0)_*(\bar\varepsilon)=\overline{(e_2, C_3)}:\Delta\to\Delta'$ as depicted in Figure~\ref{figure:example of push-forward}.
\end{example}

\begin{figure}[ht]
\[
\begin{tikzcd}[row sep=1.5pc, column sep=3pc]
\Gamma=\begin{tikzpicture}[baseline=-.5ex]
\draw[fill] (0, -0.5) circle (2pt) node[below=2ex, left=-.5ex] {$a$};
\draw[fill] (1, -0.5) circle (2pt) node[right] {$d$};
\draw[fill] (0, 0.5) circle (2pt) node[above=2ex, left] {$i$};
\draw[fill] (1, 0.5) circle (2pt) node[right] {$e$};
\draw[fill] (0, -1.5) circle (2pt) node[below left] {$b$};
\draw[fill] (1, -1.5) circle (2pt) node[below right] {$c$};
\draw[fill](1, 0.5) ++(72:1) circle (2pt) node[right] {$f$} ++(144:1) circle(2pt) node[above] {$g$} ++(216:1) circle(2pt) node[left] {$h$};
\draw[fill] (-1.5,1.5) circle (2pt) node[above left] {$j$};
\draw[fill] (-1.5,0.5) circle (2pt) node[left] {$k$};
\draw[fill] (-1.5,-0.5) circle (2pt) node[left] {$\ell$};
\draw[fill] (-1.5,-1.5) circle (2pt) node[below left] {$m$};
\draw (0.5,0.5) node[above=-.5ex] {$e_1$} (0,0) node[right=-1ex] {$e_2$} (0.5,-0.5) node[below=-.5ex] {$e_3$};
\draw[color=black, fill=red, fill opacity=0.2](0, -1.5) rectangle node[opacity=1, below] {$C_4$} (1,-0.5);
\draw[color=black, fill=yellow, fill opacity=0.2](1, 0.5) -- ++(72:1) -- ++(144:1) -- ++(216:1) -- ++(288:1);
\draw (0.5, 1) node[opacity=1, above] {$C_1$};
\draw[color=black, fill=white, fill opacity=0.2](0, 0.5) -- ++(-1.5, 1) -- ++(-90:1) node[opacity=1, above right] {$C_2$} -- ++(1.5, -1);
\draw[color=black, fill=black, fill opacity=0.2](0, -0.5) -- ++(-1.5, -1) -- ++(90:1) node[opacity=1, below right] {$C_3$} -- ++(1.5, 1);
\draw[color=black, fill=blue, fill opacity=0.2](0, -0.5) rectangle node[opacity=1, right=-1ex] {$C_0$} (1,0.5);
\end{tikzpicture}
\arrow[r,"\alpha_0"]\arrow[d,"\bar\varepsilon"'] &
\begin{tikzpicture}[baseline=-.5ex]
\draw[fill] (0, -0.5) circle (2pt) node[below=2ex, left=-.5ex] {$a$};
\draw[fill] (1, -0.5) circle (2pt) node[right] {$d$};
\draw[fill] (0, 0.5) circle (2pt) node[above=2ex, left] {$i$};
\draw[fill] (1, 0.5) circle (2pt) node[right] {$e$};
\draw[fill] (0, -1.5) circle (2pt) node[below left] {$b$};
\draw[fill] (1, -1.5) circle (2pt) node[below right] {$c$};
\draw[fill](1, 0.5) ++(72:1) circle (2pt) node[right] {$f$} ++(144:1) circle(2pt) node[above] {$g$} ++(216:1) circle(2pt) node[left] {$h$};
\draw[fill] (-1.5,1.5) circle (2pt) node[above left] {$j$};
\draw[fill] (-1.5,0.5) circle (2pt) node[left] {$k$};
\draw[fill] (-1.5,-0.5) circle (2pt) node[left] {$\ell$};
\draw[fill] (-1.5,-1.5) circle (2pt) node[below left] {$m$};
\draw (0.5,0.5) node[above=-.5ex] {$e_1$} (0,0) node[right=-1ex] {$e_2$} (0.5,-0.5) node[below=-.5ex] {$e_3$};
\draw[color=black, fill=red, fill opacity=0.2](0, -1.5) rectangle node[opacity=1, below] {$C_4$} (1,-0.5);
\draw[color=black, fill=yellow, fill opacity=0.2](1, 0.5) -- ++(72:1) -- ++(144:1) -- ++(216:1) -- ++(288:1);
\draw (0.5, 1) node[opacity=1, above] {$C_1$};
\draw[color=black, fill=white, fill opacity=0.2](0, 0.5) -- ++(-1.5, 1) -- ++(-90:1) node[opacity=1, above right] {$C_2$} -- ++(1.5, -1);
\draw[color=black, fill=black, fill opacity=0.2](0, -0.5) -- ++(-1.5, -1) -- ++(90:1) node[opacity=1, below right] {$C_3$} -- ++(1.5, 1);
\draw[color=black, fill=blue, fill opacity=0.2](0, -0.5) rectangle node[opacity=1, right=-1ex] {$C_0$} (1,0.5);
\end{tikzpicture}=\Gamma
\arrow[d, "(\alpha_0)_*(\bar\varepsilon)"]\\
\Gamma'=\begin{tikzpicture}[baseline=-.5ex]
\draw[fill] (0, -0.5) circle (2pt) node[below=2ex, left=-.5ex] {$a$};
\draw[fill] (1, -0.5) circle (2pt) node[right] {$d$};
\draw[fill] (0, 0.5) circle (2pt) node[above=2ex, left] {$i$};
\draw[fill] (1, 0.5) circle (2pt) node[right] {$e$};
\draw[fill] (0, -1.5) circle (2pt) node[below left] {$b$};
\draw[fill] (1, -1.5) circle (2pt) node[below right] {$c$};
\draw[fill](1, 0.5) ++(72:1) circle (2pt) node[right] {$f$} ++(144:1) circle(2pt) node[above] {$g$} ++(216:1) circle(2pt) node[left] {$h$};
\draw[fill] (-1.5,1.5) circle (2pt) node[above left] {$j$};
\draw[fill] (-1.5,0.5) circle (2pt) node[left] {$k$};
\draw[fill] (-1.5,-0.5) circle (2pt) node[left] {$\ell$};
\draw[fill] (-1.5,-1.5) circle (2pt) node[below left] {$m$};
\draw (0.5,0.5) node[above=-.5ex] {$e_1'$} (0,0) node[right=-1ex] {$e_2'$} (0.5,-0.5) node[below=-.5ex] {$e_3'$};
\draw[color=black, fill=red, fill opacity=0.2](0, -1.5) rectangle node[opacity=1, below] {$C_4'$} (1,-0.5);
\draw[color=black, fill=yellow, fill opacity=0.2](1, 0.5) -- ++(72:1) -- ++(144:1) -- ++(216:1) -- ++(288:1);
\draw (0.5, 1) node[opacity=1, above] {$C_1'$};
\draw[color=black, fill=white, fill opacity=0.2](0, -0.5) -- (-1.5, 1.5) -- ++(0,-1) node[opacity=1, above=1.5ex, right=-.5ex] {$C_2'$} -- (0, 0.5);
\draw[color=black, fill=black, fill opacity=0.2](0, -0.5) -- ++(-1.5, -1) -- ++(90:1) node[opacity=1, below right] {$C_3'$} -- ++(1.5, 1);
\draw[color=black, fill=blue, fill opacity=0.2](0, -0.5) rectangle node[opacity=1, right=-1ex] {$C_0'$} (1,0.5);
\end{tikzpicture}\arrow[r,"\alpha_0"] &
\begin{tikzpicture}[baseline=-.5ex]
\draw[fill] (0, -0.5) circle (2pt) node[below=2ex, left=-.5ex] {$a$};
\draw[fill] (1, -0.5) circle (2pt) node[right] {$d$};
\draw[fill] (0, 0.5) circle (2pt) node[above=2ex, left] {$i$};
\draw[fill] (1, 0.5) circle (2pt) node[right] {$e$};
\draw[fill] (0, -1.5) circle (2pt) node[below left] {$b$};
\draw[fill] (1, -1.5) circle (2pt) node[below right] {$c$};
\draw[fill](1, 0.5) ++(72:1) circle (2pt) node[right] {$f$} ++(144:1) circle(2pt) node[above] {$g$} ++(216:1) circle(2pt) node[left] {$h$};
\draw[fill] (-1.5,1.5) circle (2pt) node[above left] {$j$};
\draw[fill] (-1.5,0.5) circle (2pt) node[left] {$k$};
\draw[fill] (-1.5,-0.5) circle (2pt) node[left] {$\ell$};
\draw[fill] (-1.5,-1.5) circle (2pt) node[below left] {$m$};
\draw (0.5,0.5) node[above=-.5ex] {$e_1''$} (0,0) node[right=-1ex] {$e_2''$} (0.5,-0.5) node[below=-.5ex] {$e_3''$};
\draw[color=black, fill=red, fill opacity=0.2](0, -1.5) rectangle node[opacity=1, below] {$C_4$} (1,-0.5);
\draw[color=black, fill=yellow, fill opacity=0.2](1, 0.5) -- ++(72:1) -- ++(144:1) -- ++(216:1) -- ++(288:1);
\draw (0.5, 1) node[opacity=1, above] {$C_1''$};
\draw[color=black, fill=black, fill opacity=0.2](0, 0.5) -- ++(-1.5, 1) -- ++(-90:1) node[opacity=1, above right] {$C_2''$} -- ++(1.5, -1);
\draw[color=black, fill=white, fill opacity=0.2](0, 0.5) -- (-1.5, -1.5) -- ++(0,1) node[opacity=1, below=1.5ex, right=-0.5ex] {$C_3''$} -- (0, -0.5);
\draw[color=black, fill=blue, fill opacity=0.2](0, -0.5) rectangle node[opacity=1, right=-1ex] {$C_0''$} (1,0.5);
\end{tikzpicture}=\Delta'
\end{tikzcd}
\]
\caption{An example of push-forward}
\label{figure:example of push-forward}
\end{figure}
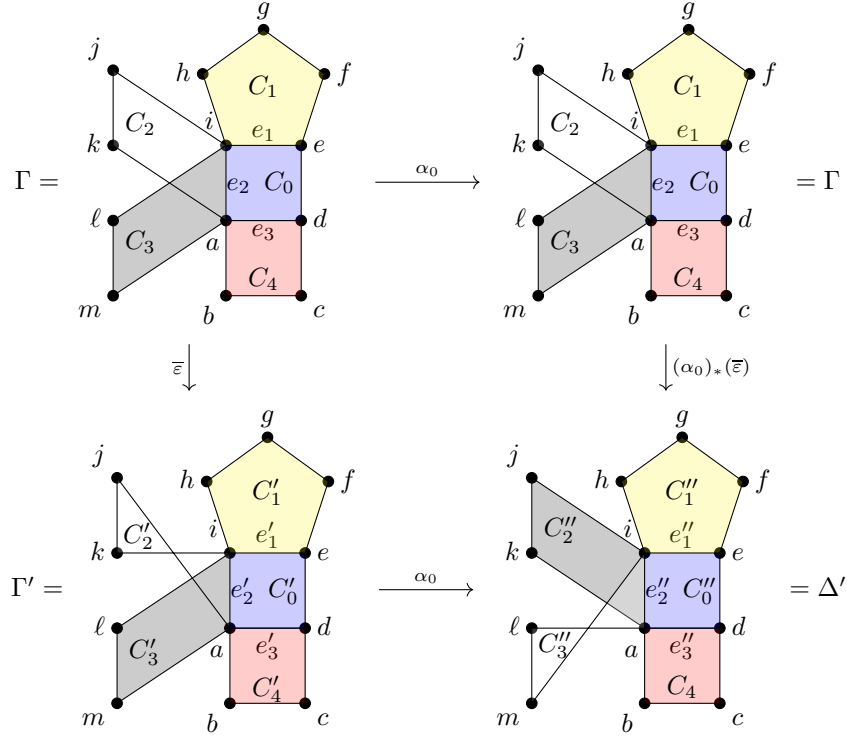

\begin{remark}
Notice that in the previous example, the graph isomorphism $\alpha_0:\Gamma'\to\Delta'$ is not an automorphism anymore.
\end{remark}

\subsection{The group \texorpdfstring{$\Iso(\Gamma)$}{Twist}}\label{section:iso}
As mentioned earlier in Remark~\ref{remark:iso}, we will consider graph isomorphisms which can be expressed as composititions of edge-twists as well in this section. Namely, those are graph isomorphisms between edge-twist equivalent graphs.
Let $\Delta$ be a graph which is edge-twist equivalent to and isomorphic to $\Gamma$. Namely,
\[
\Delta\sim\Gamma\quad\text{ and }\quad\Delta\cong\Gamma.
\]
In other words, there are two ways of obtaining $\Delta$ from $\Gamma$ such that for some $\alpha\in\mathfrak{S}_V$ and $\bar\eta:E^\out_\odd(\Ch_{\llbracket\Gamma\rrbracket})\to\mathbb{Z}_2$, 
\[
\bar{\mathcal{E}}=\bar{\mathcal{E}}(\bar\eta):\Gamma\to \Delta=\alpha(\Gamma),
\]
Then the set $\Iso(\Gamma)$ consists of such graph isomorphisms
\[
\Iso(\Gamma) = \{\alpha\in\mathfrak{S}_V\mid \Gamma\sim \alpha(\Gamma)\}\subset\mathfrak{S}_V.
\]
\begin{remark}
By Proposition~\ref{proposition:normal form for graphs}, the composition $\bar{\mathcal{E}}$ is uniquely determined only by $\Delta$.
Hence we will not lose any information even though we throw out $\bar{\mathcal{E}}$.
\end{remark}

\begin{lemma}
The set $\Iso(\Gamma)$ is closed under the composition.
\end{lemma}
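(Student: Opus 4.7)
The plan is to reduce the statement to transitivity of the edge-twist equivalence relation $\sim$ plus the push-forward construction developed in the previous subsection. Fix $\alpha,\beta\in\Iso(\Gamma)$; by definition we have $\Gamma\sim\alpha(\Gamma)$ and $\Gamma\sim\beta(\Gamma)$, and we must show that $(\alpha\beta)(\Gamma)=\alpha(\beta(\Gamma))$ lies in the same edge-twist class as $\Gamma$.

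First I would record that $\sim$ is an equivalence relation: symmetry follows from involutivity of edge-twists (as in Lemma~\ref{lemma:commutative and involutive} or directly from the definition), and transitivity is immediate by concatenating sequences of edge-twists. Next, using the second condition, pick a composition of edge-twists $\bar{\mathcal{E}}=\bar\varepsilon_n\cdots\bar\varepsilon_1:\Gamma\to\beta(\Gamma)$. The push-forward construction in Figure~\ref{figure:push-forwards and pull-backs}(a) applies to each $\bar\varepsilon_i$ via the graph isomorphism $\alpha$, and iterating it along the whole composition yields a sequence of edge-twists
\[
\alpha_*(\bar{\mathcal{E}})=\alpha_*(\bar\varepsilon_n)\cdots\alpha_*(\bar\varepsilon_1):\alpha(\Gamma)\to\alpha(\beta(\Gamma)).
\]
This witnesses $\alpha(\Gamma)\sim\alpha(\beta(\Gamma))$. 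Combined with $\Gamma\sim\alpha(\Gamma)$ and transitivity, we conclude $\Gamma\sim(\alpha\beta)(\Gamma)$, so $\alpha\beta\in\Iso(\Gamma)$.

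The main subtlety, though not really an obstacle, is to make sure the iterated push-forward is well-defined: the push-forward $\alpha_*(\bar\varepsilon_1)$ depends on viewing $\alpha$ as an isomorphism on the source graph of $\bar\varepsilon_1$, and then to push forward $\bar\varepsilon_2$ we need the induced isomorphism $\alpha:\bar\varepsilon_1(\Gamma)\to\alpha_*(\bar\varepsilon_1)(\alpha(\Gamma))$ supplied by diagram~\ref{figure:push-forward 1}. Inductively this produces graph isomorphisms after each stage, allowing the next push-forward to be formed; this is precisely the content of the push-forward formalism already set up, so the composition is legitimate and the argument is complete.
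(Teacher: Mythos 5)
Your argument is correct and follows essentially the same route as the paper's: take a chain of edge-twists witnessing $\Gamma\sim\beta(\Gamma)$, push it forward along the graph isomorphism $\alpha$ using the push-forward diagram to obtain a chain $\alpha(\Gamma)\to\alpha(\beta(\Gamma))$, and then concatenate with a chain witnessing $\Gamma\sim\alpha(\Gamma)$. The only cosmetic differences are that you phrase the final step as transitivity of $\sim$ rather than as a literal composition of the two edge-twist sequences, and that you explicitly note the iteration of the push-forward through the chain, which the paper leaves implicit; neither changes the substance.
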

\begin{proof}
For $i=1,2$, let $\alpha_i\in\mathfrak{S}_V$ with $\alpha_i(\Gamma)=\Delta_i$ be elements in $\Iso(\Gamma)$.
Then we need to show that for the composition $\alpha=\alpha_2\alpha_1$,
\[
\Gamma\sim \Delta=\alpha(\Gamma).
\]

By definition of $\Iso(\Gamma)$ and Proposition~\ref{proposition:normal form for graphs}, there exist unique compositions $\mathcal{E}_{\alpha_1}$ and $\mathcal{E}_{\alpha_2}$ of edge-twists such that for $i=1,2$,
\[
\bar{\mathcal{E}}_{\alpha_i}:\Gamma\to \Delta_i.
\]
Then as seen in the previous section, we have the push-forward $\bar{\mathcal{E}}_{\alpha_1}'\coloneqq(\alpha_2)_*(\bar{\mathcal{E}}_{\alpha_1}):\Delta_2\to\Delta$ for $\Delta=\alpha_2(\Delta_1)$, which fits into the diagram
\[
\begin{tikzcd}
\Gamma\arrow[r, "\alpha_1"] & \Delta_1 \arrow[from=d, "\bar{\mathcal{E}}_{\alpha_1}"] \arrow[r,"\alpha_2"] & \Delta \arrow[from=d,"(\alpha_2)_*(\bar{\mathcal{E}}_{\alpha_1})"']\\
& \Gamma\arrow[r, "\alpha_2"] & \Delta_2 \arrow[from=d, "\bar{\mathcal{E}}_{\alpha_2}"']\\
& & \Gamma
\end{tikzcd}
\]
Therefore the graph $\Delta$ is edge-twist equivalent to $\Gamma$ via the composition 
\[
\bar{\mathcal{E}}_\alpha\coloneqq\bar{\mathcal{E}}_{\alpha_1}'\bar{\mathcal{E}}_{\alpha_2}:\Gamma\to\Delta
\]
and we are done.
\end{proof}

\begin{theorem}
The set $\Iso(\Gamma)$ has a group structure with respect to the composition.
\end{theorem}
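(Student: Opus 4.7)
The plan is to verify the remaining subgroup axioms, since $\Iso(\Gamma)\subset \mathfrak{S}_V$ and associativity is inherited from $\mathfrak{S}_V$. Closure under composition has already been established in the previous lemma, so only two things remain: to exhibit the identity in $\Iso(\Gamma)$ and to show closure under inversion.

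The identity is immediate. Taking the zero function $\bar\eta\equiv 0:E^{\out}_\odd(\Ch_{\llbracket\Gamma\rrbracket})\to\mathbb{Z}_2$, Proposition~\ref{proposition:normal form for graphs} produces the empty composition $\bar{\mathcal{E}}(\bar\eta):\Gamma\to\Gamma$, so $\Gamma\sim \Gamma$ and hence $\operatorname{id}_V\in\Iso(\Gamma)$.

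For inverses, let $\alpha\in\Iso(\Gamma)$ with $\Delta=\alpha(\Gamma)$ and let $\bar{\mathcal{E}}=\bar{\mathcal{E}}(\bar\eta):\Gamma\to \Delta$ be the canonical composition of edge-twists provided by Proposition~\ref{proposition:normal form for graphs}. The aim is $\Gamma\sim\alpha^{-1}(\Gamma)$. I would write $\bar{\mathcal{E}}$ as a sequence $\Gamma=\Delta_0\xrightarrow{\bar\varepsilon_1}\Delta_1\xrightarrow{\bar\varepsilon_2}\cdots\xrightarrow{\bar\varepsilon_n}\Delta_n=\Delta$ and transport it across $\alpha^{-1}$ using the pull-back construction from the previous subsection (diagrams~\ref{figure:push-forward 2} and~\ref{figure:figure:push-forward 4}): each $(\alpha^{-1})^*(\bar\varepsilon_i)$ is again an edge-twist, and stitching the squares together produces a composition of edge-twists $\alpha^{-1}(\Gamma)\to\alpha^{-1}(\Delta_1)\to\cdots\to\alpha^{-1}(\Delta)=\Gamma$. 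Thus $\alpha^{-1}(\Gamma)\sim \Gamma$. Using involutivity of individual edge-twists (equation~(\ref{equation:involutive}) and Lemma~\ref{lemma:commutative and involutive}), the relation $\sim$ is symmetric, giving $\Gamma\sim\alpha^{-1}(\Gamma)$ and hence $\alpha^{-1}\in\Iso(\Gamma)$.

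There is no real obstacle here; the only subtlety is the bookkeeping needed to transport an entire sequence of edge-twists across $\alpha^{-1}$, but this is exactly what the pull-back squares in Figure~\ref{figure:push-forwards and pull-backs} encode, so the verification is mechanical. Together with the closure lemma, this exhibits $\Iso(\Gamma)$ as a subgroup of $\mathfrak{S}_V$.
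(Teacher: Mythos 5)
Your proof is correct and follows essentially the same route as the paper: establish the identity as the trivial element, transport the composition of edge-twists associated to $\alpha$ across $\alpha^{-1}$ to exhibit $\Gamma\sim\alpha^{-1}(\Gamma)$, and invoke associativity from $\mathfrak{S}_V$. One notational slip worth flagging: you write $(\alpha^{-1})^*(\bar\varepsilon_i)$, but with the paper's conventions $(\alpha^{-1})^* = \alpha_*$, which would move the sequence in the wrong direction; the operator you actually describe in words (and whose output you correctly state as $\alpha^{-1}(\Delta_{i-1})\to\alpha^{-1}(\Delta_i)$) is $\alpha^*=(\alpha^{-1})_*$, i.e.\ the push-forward along $\alpha^{-1}$, which is exactly what the paper denotes $\alpha^{-1}_*(\bar{\mathcal{E}}_\alpha)$.
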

\begin{proof}
Obviously, the identity isomorphism $\operatorname{Id}:\Gamma\to\Gamma$ is in $\Iso(\Gamma)$ and plays the role of the identity under the composition.

Let $\alpha\in\mathfrak{S}_V$ be in $\Iso(\Gamma)$ and let $\bar{\mathcal{E}}_\alpha:\Gamma\to \alpha(\Gamma)$ be a unique composition of edge-twists by Proposition~\ref{proposition:normal form for graphs}.
We will consider the inverse $\alpha^{-1}\in\mathfrak{S}_V$.
Since $\alpha^{-1}\alpha=\operatorname{Id}$, it suffices to show that $\alpha^{-1}(\Gamma)\sim \Gamma$.
Indeed, we have the diagram
\[
\begin{tikzcd}
\Gamma\arrow[r, "\alpha"] & \Delta \arrow[from=d, "\bar{\mathcal{E}}_\alpha"] \arrow[r,"\alpha^{-1}"] & \Gamma \arrow[from=d,"\alpha^{-1}_*(\bar{\mathcal{E}}_\alpha)"']\\
& \Gamma\arrow[r, "\alpha^{-1}"] & \alpha^{-1}(\Gamma)
\end{tikzcd}
\]
and therefore $\alpha^{-1}(\Gamma)$ is edge-twist equivalent to $\Gamma$ via $\alpha_*^{-1}(\bar{\mathcal{E}}_\alpha)$ and so $\alpha^{-1}\in\Iso(\Gamma)$.

Finally, since the composition is associative, the set $\Iso(\Gamma)$ has a group structure as claimed.
\end{proof}

\begin{remark}
The group $\Iso(\Gamma)$ is isomorphic to the hom-set of the twist equivalence groupoid $Twist(\mathcal{G})(\Gamma,\Gamma)$ described in \cite{Cr05}.
\end{remark}

One can check easily that the (graph) automorphism group $\Aut(\Gamma)$ of $\Gamma$ is a subgroup of $\Iso(\Gamma)$ so that
\[
[\Iso(\Gamma):\Aut(\Gamma)]=\#(\llbracket \Gamma\rrbracket).
\]
However, it is not necessarily normal in general.

\begin{example}\label{example:generators of Iso}
Recall the graph $\Gamma$ depicted in Example~\ref{example:chunk graph}.
One can easily check that 
\[
\Aut(\Gamma) = \langle \alpha_0\mid \alpha_0^2\rangle\cong \mathbb{Z}_2,
\]
where $\alpha_0$ is given in Example~\ref{example:push-forward}.
Then the group $\Iso(\Gamma)$ is generated by graph isomorphisms
\[
\{\alpha_0, \alpha_1,\dots, \alpha_4\}\subset\mathfrak{S}_V
\]
as depicted in Figure~\ref{figure:example of Iso}.
\begin{figure}[ht]
\[
\begin{tikzcd}[row sep=-1pc, column sep=1pc]
\begin{tikzpicture}[baseline=-.5ex]
\draw[fill] (0, -0.5) circle (2pt) node[below=2ex, left=-.5ex] {$a$};
\draw[fill] (1, -0.5) circle (2pt) node[right] {$d$};
\draw[fill] (0, 0.5) circle (2pt) node[above=2ex, left] {$i$};
\draw[fill] (1, 0.5) circle (2pt) node[right] {$e$};
\draw[fill] (0, -1.5) circle (2pt) node[below left] {$b$};
\draw[fill] (1, -1.5) circle (2pt) node[below right] {$c$};
\draw[fill](1, 0.5) ++(72:1) circle (2pt) node[right] {$f$} ++(144:1) circle(2pt) node[above] {$g$} ++(216:1) circle(2pt) node[left] {$h$};
\draw[fill] (-1.5,1.5) circle (2pt) node[above left] {$j$};
\draw[fill] (-1.5,0.5) circle (2pt) node[left] {$k$};
\draw[fill] (-1.5,-0.5) circle (2pt) node[left] {$\ell$};
\draw[fill] (-1.5,-1.5) circle (2pt) node[below left] {$m$};
\draw (0.5,0.5) node[above=-.5ex] {$e_1$} (0,0) node[right=-1ex] {$e_2$} (0.5,-0.5) node[below=-.5ex] {$e_3$};
\draw[color=black, fill=red, fill opacity=0.2](0, -1.5) rectangle node[opacity=1, below] {$C_4$} (1,-0.5);
\draw[color=black, fill=yellow, fill opacity=0.2](1, 0.5) ++(72:1) -- ++(144:1) -- ++(216:1) -- (1,0.5) -- (0,0.5) -- cycle;
\draw (0.5, 1) node[opacity=1, above] {$C_1$};
\draw[color=black, fill=white, fill opacity=0.2](0, 0.5) -- ++(-1.5, 1) -- ++(-90:1) node[opacity=1, above right] {$C_2$} -- ++(1.5, -1);
\draw[color=black, fill=black, fill opacity=0.2](0, -0.5) -- ++(-1.5, -1) -- ++(90:1) node[opacity=1, below right] {$C_3$} -- ++(1.5, 1);
\draw[color=black, fill=blue, fill opacity=0.2](0, -0.5) rectangle node[opacity=1, right=-1ex] {$C_0$} (1,0.5);
\end{tikzpicture}
\arrow[from=rd, bend right, "\alpha_1:h\leftrightarrow f"']
& & 
\begin{tikzpicture}[baseline=-.5ex]
\draw[fill] (0, -0.5) circle (2pt) node[below=2ex, left=-.5ex] {$a$};
\draw[fill] (1, -0.5) circle (2pt) node[right] {$d$};
\draw[fill] (0, 0.5) circle (2pt) node[above=2ex, left] {$i$};
\draw[fill] (1, 0.5) circle (2pt) node[right] {$e$};
\draw[fill] (0, -1.5) circle (2pt) node[below left] {$b$};
\draw[fill] (1, -1.5) circle (2pt) node[below right] {$c$};
\draw[fill](1, 0.5) ++(72:1) circle (2pt) node[right] {$f$} ++(144:1) circle(2pt) node[above] {$g$} ++(216:1) circle(2pt) node[left] {$h$};
\draw[fill] (-1.5,1.5) circle (2pt) node[above left] {$j$};
\draw[fill] (-1.5,0.5) circle (2pt) node[left] {$k$};
\draw[fill] (-1.5,-0.5) circle (2pt) node[left] {$\ell$};
\draw[fill] (-1.5,-1.5) circle (2pt) node[below left] {$m$};
\draw (0.5,0.5) node[above=-.5ex] {$e_1$} (0,0) node[right=-1ex] {$e_2$} (0.5,-0.5) node[below=-.5ex] {$e_3$};
\draw[color=black, fill=red, fill opacity=0.2](0, -1.5) rectangle node[opacity=1, below] {$C_4$} (1,-0.5);
\draw[color=black, fill=yellow, fill opacity=0.2](1, 0.5) -- ++(72:1) -- ++(144:1) -- ++(216:1) -- ++(288:1);
\draw (0.5, 1) node[opacity=1, above] {$C_1$};
\draw[color=black, fill=white, fill opacity=0.2](0, -0.5) -- (-1.5, 1.5) -- ++(0,-1) node[opacity=1, above=1.5ex, right=-.5ex] {$C_2$} -- (0, 0.5);
\draw[color=black, fill=black, fill opacity=0.2](0, -0.5) -- ++(-1.5, -1) -- ++(90:1) node[opacity=1, below right] {$C_3$} -- ++(1.5, 1);
\draw[color=black, fill=blue, fill opacity=0.2](0, -0.5) rectangle node[opacity=1, right=-1ex] {$C_0$} (1,0.5);
\end{tikzpicture}
\arrow[from=ld, bend left, "\alpha_2:j\leftrightarrow k"]
\\
&
\begin{tikzpicture}[baseline=-.5ex]
\draw[fill] (0, -0.5) circle (2pt) node[below=2ex, left=-.5ex] {$a$};
\draw[fill] (1, -0.5) circle (2pt) node[right] {$d$};
\draw[fill] (0, 0.5) circle (2pt) node[above=2ex, left] {$i$};
\draw[fill] (1, 0.5) circle (2pt) node[right] {$e$};
\draw[fill] (0, -1.5) circle (2pt) node[below left] {$b$};
\draw[fill] (1, -1.5) circle (2pt) node[below right] {$c$};
\draw[fill](1, 0.5) ++(72:1) circle (2pt) node[right] {$f$} ++(144:1) circle(2pt) node[above] {$g$} ++(216:1) circle(2pt) node[left] {$h$};
\draw[fill] (-1.5,1.5) circle (2pt) node[above left] {$j$};
\draw[fill] (-1.5,0.5) circle (2pt) node[left] {$k$};
\draw[fill] (-1.5,-0.5) circle (2pt) node[left] {$\ell$};
\draw[fill] (-1.5,-1.5) circle (2pt) node[below left] {$m$};
\draw (0.5,0.5) node[above=-.5ex] {$e_1$} (0,0) node[right=-1ex] {$e_2$} (0.5,-0.5) node[below=-.5ex] {$e_3$};
\draw[color=black, fill=red, fill opacity=0.2](0, -1.5) rectangle node[opacity=1, below] {$C_4$} (1,-0.5);
\draw[color=black, fill=yellow, fill opacity=0.2](1, 0.5) -- ++(72:1) -- ++(144:1) -- ++(216:1) -- ++(288:1);
\draw (0.5, 1) node[opacity=1, above] {$C_1$};
\draw[color=black, fill=white, fill opacity=0.2](0, 0.5) -- ++(-1.5, 1) -- ++(-90:1) node[opacity=1, above right] {$C_2$} -- ++(1.5, -1);
\draw[color=black, fill=black, fill opacity=0.2](0, -0.5) -- ++(-1.5, -1) -- ++(90:1) node[opacity=1, below right] {$C_3$} -- ++(1.5, 1);
\draw[color=black, fill=blue, fill opacity=0.2](0, -0.5) rectangle node[opacity=1, right=-1ex] {$C_0$} (1,0.5);
\end{tikzpicture}
\arrow[ul, bend left, "{\bar\varepsilon_1=\overline{(e_1,C_1)}}"]
\arrow[ur, bend right, "{\bar\varepsilon_2=\overline{(e_2,C_2)}}"']
\arrow[dl, bend right, "{\bar\varepsilon_4=\overline{(e_3,C_4)}}"']
\arrow[dr, bend left, "{\bar\varepsilon_3=\overline{(e_2,C_3)}}"]
& \\
\begin{tikzpicture}[baseline=-.5ex]
\draw[fill] (0, -0.5) circle (2pt) node[below=2ex, left=-.5ex] {$a$};
\draw[fill] (1, -0.5) circle (2pt) node[right] {$d$};
\draw[fill] (0, 0.5) circle (2pt) node[above=2ex, left] {$i$};
\draw[fill] (1, 0.5) circle (2pt) node[right] {$e$};
\draw[fill] (0, -1.5) circle (2pt) node[below left] {$b$};
\draw[fill] (1, -1.5) circle (2pt) node[below right] {$c$};
\draw[fill](1, 0.5) ++(72:1) circle (2pt) node[right] {$f$} ++(144:1) circle(2pt) node[above] {$g$} ++(216:1) circle(2pt) node[left] {$h$};
\draw[fill] (-1.5,1.5) circle (2pt) node[above left] {$j$};
\draw[fill] (-1.5,0.5) circle (2pt) node[left] {$k$};
\draw[fill] (-1.5,-0.5) circle (2pt) node[left] {$\ell$};
\draw[fill] (-1.5,-1.5) circle (2pt) node[below left] {$m$};
\draw (0.5,0.5) node[above=-.5ex] {$e_1$} (0,0) node[right=-1ex] {$e_2$} (0.5,-0.5) node[below=-.5ex] {$e_3$};
\draw[color=black, fill=red, fill opacity=0.2](0, -.5) -- (1,-1.5) -- node[midway, opacity=1, above] {$C_4$} (0,-1.5) -- (1,-0.5);
\draw[color=black, fill=yellow, fill opacity=0.2](1, 0.5) -- ++(72:1) -- ++(144:1) -- ++(216:1) -- ++(288:1);
\draw (0.5, 1) node[opacity=1, above] {$C_1$};
\draw[color=black, fill=white, fill opacity=0.2](0, 0.5) -- ++(-1.5, 1) -- ++(-90:1) node[opacity=1, above right] {$C_2$} -- ++(1.5, -1);
\draw[color=black, fill=black, fill opacity=0.2](0, -0.5) -- ++(-1.5, -1) -- ++(90:1) node[opacity=1, below right] {$C_3$} -- ++(1.5, 1);
\draw[color=black, fill=blue, fill opacity=0.2](0, -0.5) rectangle node[opacity=1, right=-1ex] {$C_0$} (1,0.5);
\end{tikzpicture}
\arrow[from=ru, bend left, "\alpha_4:b\leftrightarrow c"]
& & 
\begin{tikzpicture}[baseline=-.5ex]
\draw[fill] (0, -0.5) circle (2pt) node[below=2ex, left=-.5ex] {$a$};
\draw[fill] (1, -0.5) circle (2pt) node[right] {$d$};
\draw[fill] (0, 0.5) circle (2pt) node[above=2ex, left] {$i$};
\draw[fill] (1, 0.5) circle (2pt) node[right] {$e$};
\draw[fill] (0, -1.5) circle (2pt) node[below left] {$b$};
\draw[fill] (1, -1.5) circle (2pt) node[below right] {$c$};
\draw[fill](1, 0.5) ++(72:1) circle (2pt) node[right] {$f$} ++(144:1) circle(2pt) node[above] {$g$} ++(216:1) circle(2pt) node[left] {$h$};
\draw[fill] (-1.5,1.5) circle (2pt) node[above left] {$j$};
\draw[fill] (-1.5,0.5) circle (2pt) node[left] {$k$};
\draw[fill] (-1.5,-0.5) circle (2pt) node[left] {$\ell$};
\draw[fill] (-1.5,-1.5) circle (2pt) node[below left] {$m$};
\draw (0.5,0.5) node[above=-.5ex] {$e_1$} (0,0) node[right=-1ex] {$e_2$} (0.5,-0.5) node[below=-.5ex] {$e_3$};
\draw[color=black, fill=red, fill opacity=0.2](0, -1.5) rectangle node[opacity=1, below] {$C_4$} (1,-0.5);
\draw[color=black, fill=yellow, fill opacity=0.2](1, 0.5) -- ++(72:1) -- ++(144:1) -- ++(216:1) -- ++(288:1);
\draw (0.5, 1) node[opacity=1, above] {$C_1$};
\draw[color=black, fill=white, fill opacity=0.2](0, 0.5) -- ++(-1.5, 1) -- ++(-90:1) node[opacity=1, above right] {$C_2$} -- ++(1.5, -1);
\draw[color=black, fill=black, fill opacity=0.2](0, 0.5) -- (-1.5, -1.5) -- ++(0,1) node[opacity=1, below=1.5ex, right=-.5ex] {$C_3$} -- (0, -0.5);
\draw[color=black, fill=blue, fill opacity=0.2](0, -0.5) rectangle node[opacity=1, right=-1ex] {$C_0$} (1,0.5);
\end{tikzpicture}
\arrow[from=lu, bend right, "\alpha_4:\ell\leftrightarrow m"']
\end{tikzcd}
\]
\caption{Examples of elements of $\Iso(\Gamma)$}
\label{figure:example of Iso}
\end{figure}

One can check that for each $0\le i\le j\le 4$, 
\begin{equation}\label{equation:relation of Iso}
\alpha_i\alpha_j =\begin{cases}
\operatorname{Id} & i=j;\\
\alpha_3\alpha_0 & i=0, j=2;\\
\alpha_2\alpha_0 & i=0, j=3;\\
\alpha_j\alpha_i & \text{otherwise}
\end{cases}
\end{equation}
and therefore we have an isomorphism
\[
\Iso(\Gamma)\cong \mathbb{Z}_2\times\mathbb{Z}_2\times
\left(
(\mathbb{Z}_2\times \mathbb{Z}_2)\rtimes \mathbb{Z}_2
\right),
\]
where each factor from the left is generated by $\alpha_1, \alpha_4, \alpha_2, \alpha_3$ and $\alpha_0$.
\end{example}

\begin{corollary}
For a discretely rigid graph $\Gamma$, we have $\Iso(\Gamma)\cong \Aut(\Gamma)$.
\end{corollary}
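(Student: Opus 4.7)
The plan is to show the two inclusions $\Aut(\Gamma)\subseteq \Iso(\Gamma)$ and $\Iso(\Gamma)\subseteq \Aut(\Gamma)$, both of which follow essentially by unwinding definitions. The forward inclusion is already noted in the paper: if $\alpha(\Gamma)=\Gamma$ then trivially $\Gamma\sim\alpha(\Gamma)$ via the empty sequence of edge-twists, so every graph automorphism lies in $\Iso(\Gamma)$.

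For the reverse inclusion, I would take an arbitrary $\alpha\in\Iso(\Gamma)$ and set $\Delta\coloneqq\alpha(\Gamma)$. By definition of $\Iso(\Gamma)$, we have $\Gamma\sim\Delta$. The key observation is that $\Gamma\cong\Delta$ holds automatically: by the very construction of how a permutation $\alpha\in\mathfrak{S}_V$ acts on graphs at the start of Section 2.2, the permutation $\alpha$ itself provides a label-preserving graph isomorphism $\Gamma\to\Delta$. Now the discrete rigidity of $\Gamma$ says precisely that $\Gamma\sim\Delta$ together with $\Gamma\cong\Delta$ forces $\Gamma=\Delta$, i.e., $\alpha(\Gamma)=\Gamma$. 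Thus $\alpha\in\Aut(\Gamma)$.

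Combining the two inclusions yields the equality of sets, and since the group structure on both sides is inherited from composition in $\mathfrak{S}_V$, this set-theoretic equality is automatically a group isomorphism $\Iso(\Gamma)\cong\Aut(\Gamma)$.

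There is no real obstacle here: the statement is essentially a direct translation of the definition of discrete rigidity into the language of $\Iso(\Gamma)$. The only point worth emphasizing in the write-up is the tautology that $\alpha(\Gamma)$ is always isomorphic to $\Gamma$ as a \CLTTF graph, so that discrete rigidity can be applied without any additional verification.
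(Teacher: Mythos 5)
Your proof is correct and follows essentially the same route as the paper's: the key step in both is that for $\alpha\in\Iso(\Gamma)$ one has $\Gamma\sim\alpha(\Gamma)$ and $\Gamma\cong\alpha(\Gamma)$ automatically, so discrete rigidity forces $\alpha(\Gamma)=\Gamma$. The forward inclusion you spell out is already recorded in the paper as the remark that $\Aut(\Gamma)\le\Iso(\Gamma)$, so your write-up is just slightly more explicit on that point.
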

\begin{proof}
The hypothesis implies that for each $\alpha\in\Iso(\Gamma)$, we have $\alpha(\Gamma)=\Gamma$, i.e., $\alpha\in\Aut(\Gamma)$ and therefore $\Iso(\Gamma)\cong\Aut(\Gamma)$.
\end{proof}

\subsection{The category \texorpdfstring{$\mathscr{G}$}{} of \CLTTF graphs}

From now on, we mean edge-twists by edge-twists along outward edges in the chunk tree unless mentioned otherwise.

\begin{definition}
Let $\tilde{\mathscr{G}}$ be the category of \CLTTF graphs defined as follows:
\begin{itemize}
\item The objects are edge-separated \CLTTF graphs.
\item The hom-set is freely generated by graph isomorphisms and edge-twists.
\end{itemize}
\end{definition}

In other words, for any morphism $f\in\hom(\Gamma,\Delta)$ is a composition
\[
f:\Gamma=\Gamma_0\xrightarrow{f_1}\Gamma_1\xrightarrow{f_2}\cdots\xrightarrow{f_n}\Gamma_n=\Delta,
\]
where $f_i:\Gamma_{i-1}\rightarrow \Gamma_i$ is either a graph isomorphism or an edge-twist.

Notice that the hom-set of the category $\tilde{\mathscr{G}}$ is \emph{freely} generated.
In particular, any $\varepsilon=(e,C)\in E^{\out}(\Ch_{\llbracket\Gamma\rrbracket})$ with an even-labeled separating edge $e\subset \Gamma$ induces an endomorphism $\bar\varepsilon:\Gamma\to\Gamma$ but it will never be regarded as the identity.
Furthermore, if $e$ is odd-labeled, then there is an edge-twist morphism $\bar\varepsilon:\Gamma\to\Delta$ and so by regarding $\varepsilon$ as an edge in $E^\out(\Ch_{\llbracket\Gamma\rrbracket})$, it defines an edge-twist $\bar\varepsilon:\Delta\to \Gamma$ again.
However, in $\tilde{\mathscr{G}}$, the composition $\bar\varepsilon^2$ is \emph{not} the identity.
Therefore, we will denote each edge-twist in the category $\tilde{\mathscr{G}}$ by $\varepsilon$ instead of $\bar\varepsilon$ in order to avoid the confusion as above.

Now let $\mathscr{E}$ be the set of morphisms generated by edge-twists. Then by localizing $\tilde{\mathscr{G}}$ with respect to $\mathscr{E}$, we obtain the category
\[
\bar{\mathscr{G}}=\tilde{\mathscr{G}}[\mathscr{E}^{-1}].
\]
In other words, in the category $\bar{\mathscr{G}}$, we have the formal inverse $\varepsilon^{-1}\in\Hom_{\bar{\mathscr{G}}}(\Delta,\Gamma)$ of each edge-twist $\varepsilon\in\Hom_{\bar{\mathscr{G}}}(\Gamma,\Delta)$.
Hence any morphism $f\in\Hom_{\bar{\mathscr{G}}}(\Gamma,\Delta)$ is a composition 
\[
f:\Gamma=\Gamma_0\xrightarrow{f_1}\Gamma_1\xrightarrow{f_2}\cdots\xrightarrow{f_n}\Gamma_n=\Delta,
\]
where $f_i:\Gamma_{i-1}\rightarrow \Gamma_i$ is either 
\begin{enumerate}
\item a graph isomorphism $\alpha_i$,
\item an edge-twist $\varepsilon_i$, or
\item a formal inverse $\varepsilon_i^{-1}$ of an edge-twist $\varepsilon_i$.
\end{enumerate}

We also define an equivalence relation on the hom-set of $\bar{\mathscr{G}}$ generated by the following three types of relations:
\begin{enumerate}
\item for two graph isomorphisms $\alpha:\Gamma\to \Gamma'$ and $\beta:\Gamma'\to \Gamma''$ whose composition is $\gamma:\Gamma\to \Gamma''$,
\[
\beta\alpha\sim \gamma.
\]
\item for each pull-back (or push-forward) diagram of graph isomorphisms and edge-twists
\[
\begin{tikzcd}
\Gamma \arrow[r, "\alpha"] \arrow[d, "\varepsilon"'] & \Delta \arrow[d, "\varepsilon'"]\\
\Gamma' \arrow[r, "\alpha'"] & \Delta',
\end{tikzcd}
\]
there is a relation
\begin{equation}\label{equation:commutativity 1}
\varepsilon' \alpha \sim \alpha' \varepsilon.
\end{equation}
\item for each pull-back (or push-forward) diagram of edge-twists
\[
\begin{tikzcd}
\Gamma \arrow[r, "\varepsilon_1"] \arrow[d, "\varepsilon_2"'] & \Gamma_1 \arrow[d, "\varepsilon_2"]\\
\Gamma_2 \arrow[r, "\varepsilon_1"] & \Gamma',
\end{tikzcd}
\]
there is a relation
\begin{equation}\label{equation:commutativity 2}
\varepsilon_2 \varepsilon_1 \sim \varepsilon_1 \varepsilon_2.
\end{equation}
\end{enumerate}

\begin{definition}
The category $\mathscr{G}$ is defined to be the quotient category
\[
\mathscr{G} = \bar{\mathscr{G}}/\sim = \left(\tilde{\mathscr{G}}[\mathscr{E}^{-1}]\right)/\sim.
\]
\end{definition}

Let $\Gamma=(V,E,m)\in \mathscr{G}$ be a \CLTTF graph.
By the relations \eqref{equation:commutativity 1} and \eqref{equation:commutativity 2}, any isomorphism $f\in\Hom_{\mathscr{G}}(\Gamma,\Delta)$ is a composition
\begin{align}\label{equation:normal form}
f&=\mathcal{E}\alpha,&
\mathcal{E}&= \mathcal{E}(\eta)=\prod_{\varepsilon\in E^{\out}(\Ch_{\llbracket\Gamma\rrbracket})} \varepsilon^{\eta(\varepsilon)},
\end{align}
where $\alpha$ is a graph isomorphism and $\eta:E^{\out}(\Ch_{\llbracket\Gamma\rrbracket})\to \mathbb{Z}$ is a function.

\begin{definition}[Even edge-twists]
We say that a composiiton $\mathcal{E}=\mathcal{E}(\eta)$ of edge-twists in $\mathscr{G}$ is \emph{even} if $\eta(\varepsilon)$ is even for every $\varepsilon\in E^{\out}_\odd(\Ch_{\llbracket\Gamma\rrbracket})$.

A subset $\Dehn_{\mathscr{G}}(\Gamma)$ of $\Aut_{\mathscr{G}}(\Gamma)$ is defined as the set of even compositions of edge-twists.
\end{definition}

\begin{lemma}\label{lemma:even edge-twists}
Let $\mathcal{E}:\Gamma\to\Delta$ be a composition of edge-twists in $\mathscr{G}$.
Then $\Gamma=\Delta$ if and only if $\mathcal{E}\in \Dehn_{\mathscr{G}}(\Gamma)$.
\end{lemma}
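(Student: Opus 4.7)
The plan is to reduce everything to Proposition~\ref{proposition:normal form for graphs} by normalising the composition and then tracking the parities of the exponents on odd edge-twists.

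First, using the commutation relations~\eqref{equation:commutativity 2} in $\mathscr{G}$, I would put $\mathcal{E}$ in the normal form~\eqref{equation:normal form}, i.e.\ write
\[
\mathcal{E}=\mathcal{E}(\eta)=\prod_{\varepsilon\in E^{\out}(\Ch_{\llbracket\Gamma\rrbracket})}\varepsilon^{\eta(\varepsilon)}
\]
for some $\eta:E^{\out}(\Ch_{\llbracket\Gamma\rrbracket})\to\mathbb{Z}$. On the level of actual \CLTTF graphs, each $\varepsilon\in E^{\out}_{\even}(\Ch_{\llbracket\Gamma\rrbracket})$ satisfies $\bar\varepsilon(\Gamma')=\Gamma'$ by the even case in the definition of edge-twist, and each $\varepsilon\in E^{\out}_{\odd}(\Ch_{\llbracket\Gamma\rrbracket})$ is involutive on graphs by~\eqref{equation:involutive}. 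Hence the target of $\mathcal{E}(\eta)$, viewed as a graph, depends only on the $\mathbb{Z}_2$-reduction $\bar\eta\coloneqq\eta\bmod 2$ restricted to $E^{\out}_{\odd}(\Ch_{\llbracket\Gamma\rrbracket})$, and in fact
\[
\Delta=\bar{\mathcal{E}}(\bar\eta)(\Gamma).
\]

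For the $(\Leftarrow)$ direction, assume $\mathcal{E}\in\Dehn_{\mathscr{G}}(\Gamma)$, so that $\eta(\varepsilon)\in 2\mathbb{Z}$ for every $\varepsilon\in E^{\out}_{\odd}(\Ch_{\llbracket\Gamma\rrbracket})$. Then $\bar\eta\equiv 0$, and the displayed formula gives $\Delta=\Gamma$. For the $(\Rightarrow)$ direction, assume $\Gamma=\Delta$. Then both $\bar{\mathcal{E}}(\bar\eta):\Gamma\to\Gamma$ and $\bar{\mathcal{E}}(\mathbf{0}):\Gamma\to\Gamma$ are compositions of edge-twists realising the same target graph $\Gamma$; the uniqueness half of Proposition~\ref{proposition:normal form for graphs} forces $\bar\eta\equiv\mathbf{0}$. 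Equivalently, $\eta$ is even on $E^{\out}_{\odd}(\Ch_{\llbracket\Gamma\rrbracket})$, so $\mathcal{E}\in\Dehn_{\mathscr{G}}(\Gamma)$.

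The main obstacle is purely bookkeeping: ensuring that the rewriting of $\mathcal{E}$ into normal form is legitimate, i.e.\ that each application of a commutation move comes from a genuine push-forward/pull-back square of the form appearing in~\eqref{equation:commutativity 2}. Lemma~\ref{lemma:commutative and involutive} guarantees commutativity of edge-twists along outward edges of the chunk tree up to the appropriate push-forward, and Remark~\ref{remark:scopes are disjoint or nested} (the nested/disjoint scope property) lets one order the factors so that each swap sits inside such a square. Once this is in place, the proposition follows immediately from the uniqueness statement in Proposition~\ref{proposition:normal form for graphs}.
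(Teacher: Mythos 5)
Your proof is correct and is essentially the paper's argument made explicit: both directions reduce to tracking parities of exponents on odd edge-twists in the normal form of $\mathcal{E}$ and then invoking the uniqueness half of Proposition~\ref{proposition:normal form for graphs}. The paper's version simply leaves the $(\Rightarrow)$ direction to a brief ``as mentioned earlier,'' whereas you spell out that even edge-twists fix the underlying graph, odd ones are involutions via~\eqref{equation:involutive}, and uniqueness of $\bar\eta$ forces $\eta$ to be even on $E^{\out}_{\odd}$.
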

\begin{proof}
Assume that $\mathcal{E}=\mathcal{E}(\eta)$ for some $\eta:E^{\out}(\Ch_{\llbracket\Gamma\rrbracket})\to\mathbb{Z}$.
If $\mathcal{E}$ is not even, then there exists $\varepsilon\in E^{\out}(\Ch_{\llbracket\Gamma\rrbracket})$ with odd $m_{\llbracket\Gamma\rrbracket}(\varepsilon)$ so that $\eta(\varepsilon)$ is odd.
Then the resulting graph never be the same as $\Gamma$ as mentioned earlier.
Therefore $\mathcal{E}$ should be even.

Conversely, any even $\mathcal{E}$ obviously gives us an automorphism $\mathcal{E}:\Gamma\to\Gamma$.
\end{proof}

\begin{corollary}\label{corollary:even edge-twists}
The set $\Dehn_{\mathscr{G}}(\Gamma)$ is a normal subgroup of $\Aut_{\mathscr{G}}(\Gamma)$ and isomorphic to the free abelian group $\mathbb{Z}^{\#(E^\out(\Ch_{\llbracket\Gamma\rrbracket}))}$.
\end{corollary}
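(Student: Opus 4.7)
The plan is to treat $\Dehn_{\mathscr{G}}(\Gamma)$ as the image of an abelian parametrization by exponent functions, verify the subgroup and freeness properties using only the commutativity and push-forward relations of $\mathscr{G}$, and then handle normality by a direct conjugation computation on the normal form \eqref{equation:normal form}.

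First I would verify that $\Dehn_{\mathscr{G}}(\Gamma)$ is a subgroup. Given two elements $\mathcal{E}(\eta_1), \mathcal{E}(\eta_2) \in \Dehn_{\mathscr{G}}(\Gamma)$, the commutativity relation \eqref{equation:commutativity 2} lets me rewrite their composition as $\mathcal{E}(\eta_1+\eta_2)$, which is again even at odd-labeled outward edges. Inverses have the form $\mathcal{E}(\eta)^{-1}\sim \mathcal{E}(-\eta)$ after localizing each factor. Lemma~\ref{lemma:even edge-twists} confirms that each resulting word is an endomorphism of $\Gamma$, so closure and inverses stay inside $\Aut_{\mathscr{G}}(\Gamma)$.

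Next I would identify the isomorphism type. Define a map $\Phi$ from edge-twist words in $\mathscr{G}$ to $\mathbb{Z}^{E^{\out}(\Ch_{\llbracket\Gamma\rrbracket})}$ by recording the total exponent on each outward edge. Well-definedness on the equivalence relations of $\mathscr{G}$ is immediate: the graph-isomorphism composition relation does not involve edge-twists, and both the push-forward relation \eqref{equation:commutativity 1} and the commutativity relation \eqref{equation:commutativity 2} permute factors without altering total exponents. By \eqref{equation:normal form} restricted to $\Aut_{\mathscr{G}}(\Gamma)$ together with Lemma~\ref{lemma:even edge-twists}, $\Phi$ restricts to an isomorphism of $\Dehn_{\mathscr{G}}(\Gamma)$ onto the subgroup
\begin{equation*}
\bigl\{\eta\in\mathbb{Z}^{E^{\out}(\Ch_{\llbracket\Gamma\rrbracket})}\,\bigm|\,\eta(\varepsilon)\in 2\mathbb{Z}\text{ for all }\varepsilon\in E^{\out}_{\odd}(\Ch_{\llbracket\Gamma\rrbracket})\bigr\},
\end{equation*}
which is visibly free abelian of rank $\#(E^{\out}(\Ch_{\llbracket\Gamma\rrbracket}))$ via the re-indexing $\eta(\varepsilon)\mapsto \eta(\varepsilon)$ on even edges and $\eta(\varepsilon)\mapsto \eta(\varepsilon)/2$ on odd edges. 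The main obstacle in this step is the injectivity of $\Phi$: I need to argue that the equivalence relations defining $\mathscr{G}$ impose no identifications between edge-twist words beyond reordering factors, so that no non-trivial $\eta$ can collapse to the identity. This is where I would need to be most careful, appealing to the explicit generating set of relations and the fact that push-forwards simply relabel but never annihilate an edge-twist.

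Finally, for normality, take $f\in\Aut_{\mathscr{G}}(\Gamma)$ and write $f=\mathcal{E}_f\,\alpha$ in normal form, with $\alpha\in\Iso(\Gamma)$ and $\mathcal{E}_f$ an edge-twist composition from $\alpha(\Gamma)$ to $\Gamma$. For $\mathcal{E}(\eta)\in\Dehn_{\mathscr{G}}(\Gamma)$, iterated application of \eqref{equation:commutativity 1} yields $\alpha\,\mathcal{E}(\eta)=\mathcal{E}(\eta\circ\alpha_*^{-1})\,\alpha$, where $\alpha_*$ acts on $E^{\out}(\Ch_{\llbracket\Gamma\rrbracket})$ via the rooted-tree isomorphism of Theorem~\ref{theorem:invariance of the chunk tree under graph isomorphisms}. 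Since $\alpha$ preserves edge labels, $\alpha_*$ preserves the decomposition $E^{\out}=E^{\out}_{\even}\sqcup E^{\out}_{\odd}$, so $\eta\circ\alpha_*^{-1}$ remains even at odd edges. Substituting and using that $\mathcal{E}_f$ commutes with every edge-twist composition by \eqref{equation:commutativity 2}, the $\mathcal{E}_f$ and $\mathcal{E}_f^{-1}$ factors cancel and
\begin{equation*}
f\,\mathcal{E}(\eta)\,f^{-1}\;\sim\;\mathcal{E}(\eta\circ\alpha_*^{-1})\;\in\;\Dehn_{\mathscr{G}}(\Gamma),
\end{equation*}
establishing normality.
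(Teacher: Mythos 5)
Your proof follows the paper's approach: abelianness from the commutativity relation~\eqref{equation:commutativity 2}, rank via the generating set $\{\varepsilon : \varepsilon \in E^\out_\even(\Ch_{\llbracket\Gamma\rrbracket})\}\cup\{\varepsilon^2 : \varepsilon \in E^\out_\odd(\Ch_{\llbracket\Gamma\rrbracket})\}$, and normality from push-forwards preserving the parity of edge labels. Your normality computation $\alpha\,\mathcal{E}(\eta)=\mathcal{E}(\eta\circ\alpha_*^{-1})\,\alpha$ is correct and matches what the paper gestures at with ``the conjugate of an even edge-twist by a graph automorphism is again even.''

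The injectivity concern you raise about the exponent-recording map is exactly the right place to be careful, and you are right not to wave it away. The paper's one-line assertion that $\Dehn_{\mathscr{G}}(\Gamma)$ is \emph{free} abelian ``by the relation~\eqref{equation:commutativity 2}'' silently assumes that no non-trivial edge-twist composition collapses to the identity in $\mathscr{G}$. That fact is only established afterwards, in Proposition~\ref{proposition:graph isomorphism and edge-twists}, whose proof is in turn deferred to Section~3 and runs through the faithfulness of $\mathscr{F}:\mathscr{G}\to\mathscr{A}$ (Proposition~\ref{proposition:graph isomorphism and edge-twists in A}). So a fully rigorous argument at this corollary's position either invokes that later result (a forward dependency the paper also implicitly carries) or proves freeness directly from the construction of $\mathscr{G}$ as the quotient $\tilde{\mathscr{G}}[\mathscr{E}^{-1}]/{\sim}$ of a localized free category, where the three generating relations visibly never equate a non-trivial edge-twist word with an identity. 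One cosmetic point: the symbol $\Phi$ you introduce for the exponent map collides with the paper's special automorphism $\Phi$ of Example~\ref{example:special automorphism}; a different letter would avoid confusion.
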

\begin{proof}
By the above lemma, even edge-twists form a group, which is free abelian by the relation \eqref{equation:commutativity 2} and normal in $\Aut_{\mathscr{G}}(\Gamma)$ since the conjugate of an even edge-twist by a graph automorphism is again even.

Finally, the group of even edge-twists is isomorphic to a free abelian group generated by the set
\begin{align*} 
\{\varepsilon\mid \varepsilon\in E^{\out}_\even(\Ch_{\llbracket\Gamma\rrbracket})\}
&\cup
\{\varepsilon^2\mid \varepsilon\in E^{\out}_\odd(\Ch_{\llbracket\Gamma\rrbracket})\},
\end{align*}
which has one-to-one correspondence with $E^{\out}(\Ch_{\llbracket\Gamma\rrbracket})$ and we are done.
\end{proof}

\begin{proposition}\label{proposition:graph isomorphism and edge-twists}
Let $\alpha,\mathcal{E} \in \Hom_{\mathscr{G}}(\Gamma,\Delta)$ such that $\alpha$ is a graph isomorphism and $\mathcal{E}$ is a composition of edge-twists and their inverses.
Suppose that $\alpha=\mathcal{E}$ in $\Hom_{\mathscr{G}}(\Gamma,\Delta)$. Then $\Gamma=\Delta$ and both $\alpha$ and $\mathcal{E}$ are the identities.
\end{proposition}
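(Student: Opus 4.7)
The plan is to construct a forgetful functor from $\mathscr{G}$ into $\mathfrak{S}_V$ (viewed as a one-object groupoid) that records a morphism's underlying permutation of $V$ while sending all edge-twists to the identity. Concretely, define $\Phi\colon \mathscr{G}\to\mathfrak{S}_V$ by collapsing every object of $\mathscr{G}$ to the single object, sending each graph isomorphism $\alpha$ to itself viewed as an element of $\mathfrak{S}_V$, and sending each edge-twist $\varepsilon$ (together with its formal inverse $\varepsilon^{-1}$) to $\operatorname{Id}_V$. Once $\Phi$ is in hand, applying it to the hypothesis $\alpha=\mathcal{E}$ forces $\alpha=\operatorname{Id}_V$ in $\mathfrak{S}_V$; since $\alpha\colon\Gamma\to\Delta$ is a graph isomorphism whose underlying permutation is trivial, one concludes $\Delta=\alpha(\Gamma)=\Gamma$ and $\alpha=\operatorname{Id}_\Gamma$.

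The core of the argument is verifying that $\Phi$ really descends to a functor on $\mathscr{G}$. First, the assignment on generators extends uniquely to $\tilde{\mathscr{G}}$ because its hom-sets are freely generated, and descends to the localization $\tilde{\mathscr{G}}[\mathscr{E}^{-1}]$ because $\operatorname{Id}_V$ is already invertible. What remains is compatibility with the three quotient relations. Composition of graph isomorphisms is tautologically preserved, and the twist--twist commutativity \eqref{equation:commutativity 2} is automatic because both sides map to $\operatorname{Id}_V$. The delicate case is the push-forward/pull-back relation \eqref{equation:commutativity 1}, $\varepsilon'\alpha\sim\alpha'\varepsilon$: here one must use the point emphasized in Figures~\ref{figure:push-forward 1}--\ref{figure:figure:push-forward 4}, namely that $\alpha$ and $\alpha'$ in these diagrams are literally the same element of $\mathfrak{S}_V$, only viewed with different source and target graphs. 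Consequently both sides of \eqref{equation:commutativity 1} are sent to this common permutation and $\Phi$ is well-defined.

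To finish, the equation has been reduced to $\operatorname{Id}_\Gamma=\mathcal{E}$ in $\Aut_{\mathscr{G}}(\Gamma)$, with $\mathcal{E}\colon\Gamma\to\Gamma$ a composition of edge-twists. By Lemma~\ref{lemma:even edge-twists}, $\mathcal{E}$ lies in the even subgroup $\Dehn_{\mathscr{G}}(\Gamma)$, which Corollary~\ref{corollary:even edge-twists} identifies with the free abelian group on the explicit basis $\{\varepsilon\mid\varepsilon\in E^{\out}_\even(\Ch_{\llbracket\Gamma\rrbracket})\}\cup\{\varepsilon^2\mid\varepsilon\in E^{\out}_\odd(\Ch_{\llbracket\Gamma\rrbracket})\}$; the equality $\mathcal{E}=\operatorname{Id}_\Gamma$ therefore forces every basis exponent to vanish, so $\mathcal{E}$ is the empty composition as required. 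The principal obstacle I anticipate is the well-definedness check for $\Phi$, specifically making the argument at \eqref{equation:commutativity 1} airtight by appealing to the explicit construction of push-forwards and pull-backs; once that is pinned down, the remainder is a short application of Lemma~\ref{lemma:even edge-twists} and Corollary~\ref{corollary:even edge-twists}.
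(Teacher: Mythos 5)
Your proof is correct, and it takes a genuinely different route from the paper. The paper derives this proposition as a corollary of Proposition~\ref{proposition:graph isomorphism and edge-twists in A}, whose proof is an induction on the diameter of the chunk tree using Artin-group structure theory (torsion-freeness, triviality of the centralizer of a chunk subgroup, Godelle's characterization of normalizers of dihedral parabolics). Your approach avoids Artin groups entirely: you build a forgetful functor $\Phi\colon\mathscr{G}\to\mathfrak{S}_V$ (viewed as a one-object groupoid) killing all edge-twists, verify its well-definedness across the localization and the quotient relations (the only nontrivial check being relation~\eqref{equation:commutativity 1}, which works precisely because, by construction of the push-forward/pull-back, the top and bottom graph isomorphisms in Figure~\ref{figure:push-forwards and pull-backs} are literally the same element of $\mathfrak{S}_V$), and then read off $\alpha=\operatorname{Id}_V$, hence $\Delta=\Gamma$ and $\alpha=\operatorname{Id}_\Gamma$. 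In fact this is exactly the formalization of the informal remark the paper makes just after the statement --- ``the only relations in \eqref{equation:commutativity 1} and \eqref{equation:commutativity 2} do not cancel a graph isomorphism with a composition of edge-twists'' --- which the authors state but do not pursue; their ``concrete proof'' is chosen because Proposition~\ref{proposition:graph isomorphism and edge-twists in A} is needed independently for the faithfulness of $\mathscr{F}$ (Corollary~\ref{corollary:faithfulness}). Your approach buys a shorter, purely combinatorial argument for the statement at hand, while the paper's buys the stronger group-theoretic statement needed elsewhere. One small remark: your closing paragraph invoking Lemma~\ref{lemma:even edge-twists} and Corollary~\ref{corollary:even edge-twists} is not actually needed --- once you have $\alpha=\operatorname{Id}_\Gamma$, transitivity with the hypothesis $\alpha=\mathcal{E}$ already gives $\mathcal{E}=\operatorname{Id}_\Gamma$ as morphisms in $\mathscr{G}$, which is all the proposition asserts.
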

This proposition is evident since the only relations in \eqref{equation:commutativity 1} and \eqref{equation:commutativity 2} do not cancel a graph isomorphism with a composition of edge-twists.
However, we will give a concrete proof later.

Under the aid of Proposition~\ref{proposition:graph isomorphism and edge-twists}, we have the following theorem.
\begin{theorem}\label{theorem:normal form in G}
For each isomorphism $f\in\Hom_{\mathscr{G}}(\Gamma,\Delta)$, there is a unique pair of a graph isomorphism $\alpha$ and a composition $\mathcal{E}$ of edge-twists or inverses such that
\[
f=\mathcal{E}\alpha.
\]
\end{theorem}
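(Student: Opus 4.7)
The plan is to obtain existence directly from the defining relations of $\mathscr{G}$ and to derive uniqueness as a one-line consequence of Proposition~\ref{proposition:graph isomorphism and edge-twists}.

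For existence, I would start with any representative $f = f_n\cdots f_1$ in which each $f_i$ is a graph isomorphism, an edge-twist, or the formal inverse of an edge-twist, and repeatedly apply relation~\eqref{equation:commutativity 1} (together with its inverted form, which handles factors $\varepsilon^{-1}$ by reading the push-forward square backwards) to slide every graph isomorphism factor to the rightmost position. Concretely, whenever a pattern $\varepsilon\alpha$ occurs with $\alpha$ a graph isomorphism and $\varepsilon^{\pm 1}$ an edge-twist or its inverse, I replace it by $\alpha'\varepsilon'^{\pm 1}$ using the pull-back $\varepsilon'=\alpha^{*}(\varepsilon)$; by Theorems~\ref{theorem:invariance of the chunk tree under graph isomorphisms} and~\ref{theorem:invariance of the chunk tree under edge-twists} this pull-back is again the formal symbol of an outward edge in the chunk tree of the source, so $\varepsilon'$ is indeed a legitimate edge-twist morphism in $\mathscr{G}$. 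After finitely many moves all graph isomorphisms are gathered at the right end, and relation~(1) collapses them into a single graph isomorphism $\alpha\colon\Gamma\to\Gamma'$, leaving $\mathcal{E}\colon\Gamma'\to\Delta$ a composition of edge-twists and their formal inverses. This gives the desired factorisation $f=\mathcal{E}\alpha$.

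For uniqueness, suppose $f=\mathcal{E}_1\alpha_1=\mathcal{E}_2\alpha_2$ with $\alpha_i\colon\Gamma\to\Gamma_i$ graph isomorphisms and $\mathcal{E}_i\colon\Gamma_i\to\Delta$ compositions of edge-twists and their formal inverses. Since every morphism in $\mathscr{G}$ is invertible (edge-twists were inverted by localisation and graph isomorphisms are permutations of $V$), we may rearrange to
\[
\alpha_2\alpha_1^{-1}\;=\;\mathcal{E}_2^{-1}\mathcal{E}_1 \;\in\;\Hom_{\mathscr{G}}(\Gamma_1,\Gamma_2).
\]
The left-hand side is a graph isomorphism while the right-hand side is a composition of edge-twists and their inverses, so Proposition~\ref{proposition:graph isomorphism and edge-twists} applies and forces $\Gamma_1=\Gamma_2$ with both sides equal to the identity. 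Hence $\alpha_1=\alpha_2$ and $\mathcal{E}_1=\mathcal{E}_2$, as required.

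The only genuine obstacle is Proposition~\ref{proposition:graph isomorphism and edge-twists} itself; the theorem is almost a formal corollary of it, which is exactly why the author signals that the proposition needs a concrete proof later. A minor bookkeeping point in the existence step is to check that pull-backs of outward edge-twists along graph isomorphisms stay outward edge-twists, but this follows immediately from the invariance of the rooted chunk tree $(\Ch_\Gamma,*_\Gamma)$ under both types of modifications, so the sliding procedure is entirely mechanical.
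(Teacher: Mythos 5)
Your proof is correct and takes essentially the same approach as the paper's: the paper records the existence of a factorisation $f=\mathcal{E}\alpha$ as equation~\eqref{equation:normal form} immediately before the theorem (derived there from the relations \eqref{equation:commutativity 1} and \eqref{equation:commutativity 2}), and the body of its proof consists exactly of your uniqueness argument—cancel to compare a graph isomorphism against a composition of edge-twists, then invoke Proposition~\ref{proposition:graph isomorphism and edge-twists}. One small bookkeeping slip in your existence step: to land on $f=\mathcal{E}\alpha$ with $\alpha$ rightmost (applied first), the sliding move you need is to rewrite $\alpha\varepsilon$ as $\varepsilon'\alpha'$ with $\varepsilon'=\alpha_*(\varepsilon)$ the push-forward, whereas you stated the reverse replacement $\varepsilon\alpha\mapsto\alpha'\varepsilon'$ via the pull-back, which moves graph isomorphisms to the other end of the word. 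Since relation~\eqref{equation:commutativity 1} is two-sided the idea is still sound, but the procedure as literally written would not terminate in the stated normal form.
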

\begin{proof}
Suppose that $f$ has two such expressions $f=\mathcal{E}\alpha= \mathcal{E}'\alpha'$.
By pre- and post-compositions of $\alpha'^{-1}$ and $\mathcal{E}^{-1}$, we have $\alpha\alpha'^{-1} = \mathcal{E}^{-1}\mathcal{E}'$,
which should be the identity by Proposition~\ref{proposition:graph isomorphism and edge-twists} and so $\alpha=\alpha'$ and $\mathcal{E}=\mathcal{E}'$ as desired.
\end{proof}

As an immediate consequence, we have the following corollary.
\begin{corollary}\label{corollary:rigid automorphism in G}
Let $\Gamma$ be a discretely rigid \CLTTF graph. Then the automorphism group $\Aut_{\mathscr{G}}(\Gamma)$ is the semidirect product of the free abelian group generated by edge-twists and the automorphism group of $\Gamma$.
\[
\Aut_{\mathscr{G}}(\Gamma)\cong
\Dehn_{\mathscr{G}}(\Gamma)\rtimes \Aut(\Gamma)
\cong \mathbb{Z}^{\#(E^{\out}(\Ch_{\Gamma}))}\rtimes \Aut(\Gamma)
\]
\end{corollary}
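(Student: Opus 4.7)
The plan is to assemble the corollary from Theorem~\ref{theorem:normal form in G} together with discrete rigidity and the results already established about $\Dehn_{\mathscr{G}}(\Gamma)$. Start by taking any $f\in\Aut_{\mathscr{G}}(\Gamma)$ and invoking Theorem~\ref{theorem:normal form in G} to write it uniquely as $f=\mathcal{E}\alpha$ where $\alpha$ is a graph isomorphism and $\mathcal{E}$ is a composition of edge-twists and their inverses. A priori $\alpha$ has target some $\Delta'$ with $\alpha(\Gamma)=\Delta'$ and $\mathcal{E}\colon\Delta'\to\Gamma$. Because $\mathcal{E}$ is built from edge-twists we have $\Delta'\sim\Gamma$, and because $\alpha$ is a graph isomorphism we have $\Delta'\cong\Gamma$; discrete rigidity forces $\Delta'=\Gamma$. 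Hence $\alpha\in\Aut(\Gamma)$ and $\mathcal{E}\colon\Gamma\to\Gamma$ is an endomorphism in $\mathscr{G}$ built from edge-twists. Applying Lemma~\ref{lemma:even edge-twists} then gives $\mathcal{E}\in\Dehn_{\mathscr{G}}(\Gamma)$.

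Next I would observe that conversely every product $\mathcal{E}\alpha$ with $\mathcal{E}\in\Dehn_{\mathscr{G}}(\Gamma)$ and $\alpha\in\Aut(\Gamma)$ is an automorphism of $\Gamma$ in $\mathscr{G}$, so the above decomposition realises $\Aut_{\mathscr{G}}(\Gamma)=\Dehn_{\mathscr{G}}(\Gamma)\cdot\Aut(\Gamma)$. The intersection $\Dehn_{\mathscr{G}}(\Gamma)\cap\Aut(\Gamma)$ is trivial: an element in it is simultaneously a graph isomorphism and a composition of edge-twists, so Proposition~\ref{proposition:graph isomorphism and edge-twists} (equivalently, the uniqueness clause of Theorem~\ref{theorem:normal form in G}) makes it the identity. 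Normality of $\Dehn_{\mathscr{G}}(\Gamma)$ in $\Aut_{\mathscr{G}}(\Gamma)$ is already provided by Corollary~\ref{corollary:even edge-twists}. The internal characterisation of a semidirect product therefore yields
\[
\Aut_{\mathscr{G}}(\Gamma)\cong \Dehn_{\mathscr{G}}(\Gamma)\rtimes\Aut(\Gamma).
\]

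Finally, Corollary~\ref{corollary:even edge-twists} identifies $\Dehn_{\mathscr{G}}(\Gamma)$ with the free abelian group on $E^{\out}(\Ch_{\Gamma})$; combining with the previous display completes the proof. The only conceptually delicate point is the first step: one must be sure that discrete rigidity, not just rigidity, is the right hypothesis — rigidity alone would merely give $\Delta'\cong\Gamma$ up to isomorphism, whereas to identify $\alpha$ as a \emph{self}-isomorphism of the specific vertex-labelled object $\Gamma$ (and not just of some twist-equivalent copy) we need $\Delta'=\Gamma$ on the nose, which is exactly what discrete rigidity supplies. Everything else is bookkeeping against results already proved.
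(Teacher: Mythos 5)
Your argument is correct and follows the paper's proof in essentially the same way: decompose $f=\mathcal{E}\alpha$ via Theorem~\ref{theorem:normal form in G}, use discrete rigidity to force the intermediate graph to equal $\Gamma$ so that $\alpha\in\Aut(\Gamma)$ and $\mathcal{E}\in\Dehn_{\mathscr{G}}(\Gamma)$ by Lemma~\ref{lemma:even edge-twists}, and then invoke normality (Corollary~\ref{corollary:even edge-twists}) and trivial intersection (Proposition~\ref{proposition:graph isomorphism and edge-twists}) to assemble the internal semidirect product. The paper's proof is terser — it simply lists the supporting results — while you helpfully spell out which reference supplies normality, which supplies trivial intersection, and which supplies the surjectivity of the product map, so yours reads as a fleshed-out version of the same argument.
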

\begin{proof}
Let $f=\mathcal{E}\alpha\in \Aut_{\mathscr{G}}(\Gamma)$ with a graph isomorphism $\alpha:\Gamma\to\Delta$ and a composition of edge-twists $\mathcal{E}:\Delta\to\Gamma$ so that $\Gamma\sim\Delta$ and $\Gamma\cong\Delta$.
Since $\Gamma$ is discretely rigid, $\Gamma=\Delta$ and so both $\alpha$ and $\mathcal{E}$ are automorphisms.
Therefore $\Aut_{\mathscr{G}}(\Gamma)$ is generated by $\Aut(\Gamma)$ and $\Dehn_{\mathscr{G}}(\Gamma)$ by Lemma~\ref{lemma:even edge-twists}.

Finally, by Corollary~\ref{corollary:even edge-twists}, Proposition~\ref{proposition:graph isomorphism and edge-twists} and Theorem~\ref{theorem:normal form in G}, we are done.
\end{proof}

\begin{example}[Special automorphism $\Phi$]\label{example:special automorphism}
Let $*_\Gamma$ be the central vertex of $\Ch_\Gamma$.
If $*_\Gamma$ is a chunk, then $\Phi\in\Aut_{{\mathscr{G}}}(\Gamma)$ will be defined to be the identity.

Suppose that $*_\Gamma=e=\{s,t\}$ is a separating edge. Let
\[
\{\varepsilon_1,\dots, \varepsilon_N\mid \varepsilon_i=(e, C_i)\}\subset E^\out(\Ch_\Gamma)
\]
be the subset of edges adjacent to $*_\Gamma$ of $\Ch_\Gamma$.
We define the composition 
\begin{equation}\label{equation:edge-twist of the center}
\mathcal{E}_{*_\Gamma} = \varepsilon_1\varepsilon_2\cdots\varepsilon_N.
\end{equation}
of all edge-twists $\varepsilon_1,\dots, \varepsilon_N$.

If the label $m(e)$ is even, then $\Delta=\Gamma$ and so $\mathcal{E}$ is an automorphism and let $\Phi=\mathcal{E}_{*_\Gamma}$.

Otherwise, notice that $\Delta$ is obtained by interchanging the roles of vertices $s$ and $t$, and isomorphic to $\Gamma$.
The precise graph isomorphism $\alpha_{*_\Gamma}:\Gamma\to\Delta$ is given by
\[
\alpha_{*_\Gamma}(v)=\begin{cases}
v & v\not\in\{s,t\};\\
t & v=s;\\
s & v=t.
\end{cases}
\]
Then we define $\Phi$ to be the composition $\mathcal{E}_{*_\Gamma}\alpha_{*_\Gamma}:\Gamma\to \Gamma$.

In summary, the special automorphism $\Phi$ is defined as
\begin{equation}\label{equation:special automorphism}
\Phi\coloneqq\begin{cases}
\operatorname{Id} & *_\Gamma\text{ is a chunk};\\
\mathcal{E}_{*_\Gamma} & *_\Gamma\text{ is an even-labeled separating edge};\\
\mathcal{E}_{*_\Gamma}\alpha_{*_\Gamma} & *_\Gamma\text{ is an odd-labeled separating edge}.
\end{cases}
\end{equation}
\end{example}

\begin{remark}\label{remark:rigid means central chunk}
Observe that if $*_\Gamma$ is an odd-labeled separating edge, then $\Gamma$ can not be discretely rigid.
Conversely, for any discretely rigid \CLTTF graph $\Gamma$, the central vertex $*_\Gamma$ is either a chunk or an even-labeled separating edge.
\end{remark}

\section{\CLTTF Artin groups}\label{section:CLTTF Artin groups}

\subsection{\CLTTF Artin groups and their isomorphisms}
Let $\Gamma=(V,E,m)$ be a \CLTTF graph. An {\em Artin group} $A_\Gamma$ with a defining graph $\Gamma$ is given by the group presentation 
\begin{equation}\label{equation:Artin group presentation}
A_\Gamma=\langle V \mid 
(s,t;m(e)) = (t,s; m(e))\text{ for each }e=\{s,t\}\in E\rangle,
\end{equation}
where $(s,t;m)$ is the alternating product of generators $s$ and $t$ of length $m$.
For example,
\begin{align*}
(s,t;1) &= s,& 
(s,t;2) &= st,&
(s,t;3) &= sts,& &\dots\\
(t,s;1) &= t,& 
(t,s;2) &= ts,&
(t,s;3) &= tst,& &\dots
\end{align*}

For each $e=\{s,t\}\in E$, let us denote the subgroup $G(e)$ generated by $\{s,t\}$.
Then the element $x_e=(s,t;m(e))\in G(e)$ preserves the set $\{s,t\}$ of generators under the conjugation. That is,
\[
x_e^{-1} \{s,t\} x_e = \{s,t\}
\]
and we call $x_e$ the \emph{quasi-center} of $G(e)$ or simply the \emph{quasi-center} for $e$.
On the other hand, the conjugation by $x_e$ preserves each generator $s$ and $t$ if and only if $m(e)$ is even. Therefore the element $z_e$ defined as 
\[
z_e = \begin{cases}
x_e^2 & m(e)\text{ is odd};\\
x_e & m(e)\text{ is even},
\end{cases}
\]
generates the center of $G(e)$.

According to the Crisp's result in \cite{Cr05}, there are four types of elementary isomorphisms which generate every isomorphism between \CLTTF Artin groups as follows:

\begin{enumerate}
\item A graph isomorphism $\alpha_{\#}:A_\Gamma\rightarrow A_\Delta$ defined as $\alpha_{\#}(v)=\alpha(v)$ for each $v\in V$, where $(\alpha:\Gamma\rightarrow\Delta)\in \mathfrak{S}_V$ is a graph isomorphism.
\item The global inversion $\iota:A_\Gamma \rightarrow A_\Gamma$ defined as $\iota(v)=v^{-1}.
$ for each $v\in V$.
\item An inner automorphisms $g_{\#}:A_\Gamma \rightarrow A_\Gamma$ for some $g\in A_\Gamma$ defined as $g_{\#}(v)=g^{-1}vg$ for each $v\in V$.
\item A partial conjugation $\varepsilon_\#:A_\Gamma\to A_\Delta$ for each decomposition $\varepsilon=(\Gamma_1, e,\Gamma_2)$ defined as
\[
\varepsilon_\#(v)=\begin{cases}
v & v\in V_1(\varepsilon);\\
x_e^{-1}v x_e & v\not\in V_1(\varepsilon),
\end{cases}
\]
where the graph $\Delta$ is obtained by edge-twists with respect to the decomposition $\varepsilon$.
\end{enumerate}

\begin{remark}
The above classification is slightly different from that described in \cite{Cr05}.
Indeed, all graphs are \emph{up to isomorphism} in \cite{Cr05} and so all graph isomorphisms above should be translated into graph automorphisms. Actually, this can be done by fixing a reference graph isomorphism $\Gamma\to\Delta$ for each $\Delta\cong\Gamma$.
\end{remark}

\begin{remark}
When $\Gamma$ has a leaf, then the leaf can be inverted separately, called the \emph{leaf inversion}. However, by Assumption~\ref{assumption:edge-separated}, there are no leaves in $\Gamma$.
\end{remark}

\begin{definition}[Rigidity of \CLTTF Artin groups]
A \CLTTF Artin group $A_\Gamma$ is said to be \emph{rigid} if it has a unique defining graph $\Gamma$ up to isomorphism.
\end{definition}
\begin{theorem}[\cite{BMcMN02}]
A \CLTTF Artin group $A_\Gamma$ is rigid if and only if so is $\Gamma$.
\end{theorem}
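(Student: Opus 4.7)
The plan is to derive this biconditional directly from Crisp's classification of isomorphisms between \CLTTF Artin groups recalled just before the theorem: every isomorphism $A_\Gamma \to A_\Delta$ is a composition of graph isomorphisms, the global inversion, inner automorphisms, and partial conjugations. Once this classification is in hand, the theorem reduces to a bookkeeping argument that matches each elementary isomorphism on the Artin-group side with its effect (or lack thereof) on the defining graph.

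For the $(\Rightarrow)$ direction, suppose $A_\Gamma$ is rigid and $\Gamma \sim \Delta$. By the definition of edge-twist equivalence there is a sequence $\Gamma = \Gamma_0, \Gamma_1, \dots, \Gamma_k = \Delta$ connected by successive edge-twists, and the corresponding partial conjugation isomorphisms $\varepsilon_\#$ compose to an isomorphism $A_\Gamma \to A_\Delta$. Rigidity of $A_\Gamma$ forces $\Delta \cong \Gamma$, so $\Gamma$ is rigid as a \CLTTF graph.

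For the $(\Leftarrow)$ direction, suppose $\Gamma$ is rigid as a \CLTTF graph and let $\Delta$ be any \CLTTF graph with $A_\Gamma \cong A_\Delta$. Fix such an isomorphism and express it as $\phi_n \cdots \phi_1$ with each $\phi_i$ elementary. Track the intermediate defining graphs: inversions and inner automorphisms preserve the graph, graph isomorphisms replace it by an isomorphic one, and each partial conjugation $\varepsilon_\#$ replaces the current graph by its edge-twist $\bar\varepsilon(\Gamma_i)$, which is edge-twist equivalent to $\Gamma_i$. Concatenating these steps exhibits $\Delta$ as obtained from $\Gamma$ by a sequence of graph isomorphisms and edge-twists, so $\Delta \cong \Delta'$ for some $\Delta' \sim \Gamma$. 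Graph rigidity then gives $\Delta' \cong \Gamma$, hence $\Delta \cong \Gamma$, proving rigidity of $A_\Gamma$.

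The main (and only real) obstacle is the invocation of Crisp's classification itself, which is the deep input supplied by \cite{Cr05}. Beyond that, the only thing to check is that the source/target correspondence of each elementary isomorphism translates faithfully into the graph-theoretic relations $\cong$ (for graph isomorphisms) and $\sim$ (for partial conjugations), and that the trivial classes (inversion, inner automorphism) leave the defining graph untouched. All of this is immediate from the definitions reviewed in Section~\ref{section:CLTTF Artin groups}, so the proof should be quite short once Crisp's theorem is cited.
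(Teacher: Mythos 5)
The paper does not prove this theorem---it is simply cited to \cite{BMcMN02}---so the argument you supply is inherently a different route from the paper's (non-)proof. Your derivation from Crisp's classification is correct. The forward direction needs only that partial conjugations realize edge-twists as group isomorphisms, so $\Gamma\sim\Delta$ makes $\Delta$ a defining graph of the same group and rigidity of $A_\Gamma$ gives $\Delta\cong\Gamma$. In the backward direction you track defining graphs through the elementary factors; the one step you compress is the passage from an interleaved chain of graph isomorphisms and edge-twists to ``$\Delta\cong\Delta'$ for some $\Delta'\sim\Gamma$,'' which relies on the push-forward/pull-back commutation from Section~\ref{section:CLTTF graphs} (equivalently the normal form $f=\mathcal{E}\alpha$ of \eqref{equation:normal form}). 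An alternative that avoids any rearrangement is to observe that graph rigidity is preserved along each step of the chain---trivially by $\cong$, and by definition under $\sim$---and induct. What your route buys is a short, self-contained argument within the paper's framework; what it costs is logical independence: Crisp's Theorem~1 of \cite{Cr05} postdates \cite{BMcMN02} and is a strictly stronger classification result, so this should be read as a derivation from later work rather than an independent proof of the cited theorem. You also implicitly restrict the competing defining graph $\Delta$ to be CLTTF, which matches the paper's conventions but quietly uses the fact that a CLTTF Artin group admits no non-CLTTF defining graph.
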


\subsection{The category \texorpdfstring{$\mathscr{A}$}{} of \CLTTF Artin groups}
Let us define the category $\mathscr{A}$ of \CLTTF Artin groups, whose objects and morphisms are as follows:
\begin{enumerate}
\item Objects are Artin group $A_\Gamma$ for all \CLTTF graphs $\Gamma$ given by the group presentation as described in \eqref{equation:Artin group presentation}.
\item Morphisms are compositions of partial conjugations and graph isomorphisms.
\end{enumerate}

Now let us consider the functor $\tilde{\mathscr{F}}:\tilde{\mathscr{G}}\to{\mathscr{A}}$ as follows:
for each \CLTTF graph $\Gamma$, we assign the Artin group $A_\Gamma$
\[
\tilde{\mathscr{F}}(\Gamma)=A_\Gamma
\]
given by the group presentation as mentioned at the beginning.
For each graph isomorphism $\alpha$ and edge-twist $\varepsilon$ in $\Hom_{\tilde{\mathscr{G}}}(\Gamma,\Delta)$, we assign a graph isomorphism and a partial conjugation
\[
\tilde{\mathscr{F}}(\alpha)=\alpha_\#\quad\text{ and }\quad
\tilde{\mathscr{F}}(\varepsilon)=\varepsilon_\#,
\]
respectively.
Then since the morphisms in $\tilde{\mathscr{G}}$ are freely generated by graph isomorphisms and edge-twists, the functor $\tilde{\mathscr{F}}$ is well-defined.

\begin{proposition}\label{proposition:existence}
The functor $\tilde{\mathscr{F}}:\tilde{\mathscr{G}}\to\mathscr{A}$ factors through the localization $\bar{\mathscr{G}}$ and the quotient category $\mathscr{G}$. Namely, there exist unique functors up to natural isomorphisms
\begin{align*}
\bar{\mathscr{F}}:\bar{\mathscr{G}}\to\mathscr{A}\quad\text{ and }\quad
\mathscr{F}:\mathscr{G}\to\mathscr{A},
\end{align*}
which fit into the following commutative diagram:
\[
\begin{tikzcd}[column sep=4pc]
\tilde{\mathscr{G}} \arrow[rd, bend left, "\tilde{\mathscr{F}}"]\arrow[d]\\
\bar{\mathscr{G}}=\tilde{\mathscr{G}}[\mathscr{E}^{-1}]\arrow[r, "\exists\bar{\mathscr{F}}"]
\arrow[d, ->>] & \mathscr{A}\\
\mathscr{G}=\bar{\mathscr{G}}/\sim\arrow[ru, bend right, "\exists\mathscr{F}"']
\end{tikzcd}
\]
\end{proposition}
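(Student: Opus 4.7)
The plan is to invoke the universal property of localization to construct $\bar{\mathscr{F}}$ and then the universal property of the quotient category to construct $\mathscr{F}$; uniqueness up to natural isomorphism in both stages is automatic from the respective universal properties, and the commutativity of the triangle is built into the construction.

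For the first factorization I would verify that $\tilde{\mathscr{F}}(\varepsilon)=\varepsilon_\#$ is an isomorphism in $\mathscr{A}$ for every edge-twist $\varepsilon:\Gamma\to\Delta$, which is precisely the condition for $\tilde{\mathscr{F}}$ to extend uniquely to the localization $\bar{\mathscr{G}}=\tilde{\mathscr{G}}[\mathscr{E}^{-1}]$. By Crisp's results, $\varepsilon_\#$ sends the Artin generators of $A_\Gamma$ to a set of Artin generators of $A_\Delta$---namely $v$ for $v\in V_1(\varepsilon)$ and $x_e^{-1}vx_e$ for $v\in V_2(\varepsilon)$---and intertwines the defining relations, so $\varepsilon_\#$ is indeed an isomorphism whose inverse is the partial conjugation with respect to the corresponding decomposition of $\Delta$, hence is itself a morphism in $\mathscr{A}$.

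For the second factorization I would check that $\bar{\mathscr{F}}$ sends each of the three families of relations generating $\sim$ to equalities in $\mathscr{A}$. Relation (1), $\beta\alpha\sim\gamma$ for graph isomorphisms whose composition is $\gamma$, is tautological since the $\alpha_\#$ are defined by their action on generators. Relation (2), $\varepsilon'\alpha\sim\alpha'\varepsilon$ coming from a push-forward/pull-back square, is verified generator by generator: for $v\in V_1(\varepsilon)$ both sides return $\alpha(v)$, while for $v\in V_2(\varepsilon)$ the label-preserving property of $\alpha$ yields $\alpha_\#(x_e)=x_{\alpha(e)}$, hence
\[
(\alpha'\varepsilon)_\#(v)=\alpha(x_e^{-1}vx_e)=x_{\alpha(e)}^{-1}\alpha(v)x_{\alpha(e)}=(\varepsilon'\alpha)_\#(v).
\]
Relation (3), $\varepsilon_2\varepsilon_1\sim\varepsilon_1\varepsilon_2$, is handled using Remark~\ref{remark:scopes are disjoint or nested}: the scopes $V\setminus V_1(\varepsilon_i)$ are either disjoint, in which case each partial conjugation fixes the support of the other and commutativity is immediate, or nested, in which case the outer partial conjugation carries the quasi-central element of the inner edge to the quasi-central element of the corresponding edge in the twisted graph.

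The main obstacle I anticipate is relation (3) in the nested case. Because partial conjugations act by conjugation on only one side of a decomposition, the quasi-central element $x_e$ of the inner edge-twist must be tracked through the outer twist using an analogue of $\alpha_\#(x_e)=x_{\alpha(e)}$, but now with $\alpha$ replaced by the push-forward map from the previous subsection. Once this bookkeeping is done, the equality of compositions reduces to a finite verification on the generating set $V$, and both $\bar{\mathscr{F}}$ and $\mathscr{F}$ then follow directly from the respective universal properties, which simultaneously deliver the stated uniqueness up to natural isomorphism.
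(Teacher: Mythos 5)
Your proposal is correct and follows essentially the same route as the paper: invoke the universal property of the localization (checking that $\varepsilon_\#$ is an isomorphism in $\mathscr{A}$), then invoke the universal property of the quotient (checking the three families of relations); relation (2) is checked generator by generator via $\alpha_\#(x_e)=x_{\alpha(e)}$, and relation (3) is split via Remark~\ref{remark:scopes are disjoint or nested} into disjoint and nested cases, the latter handled by tracking quasi-central elements. If anything, you flag a subtlety the paper's write-up treats rather lightly: in the nested case the paper asserts the identifications $x_{e_2}=x_{e_2'}$ ``as words of $V$'' and then freely passes $x_{e_1}$ through the inner partial conjugation, whereas you correctly anticipate that the quasi-center of one twist must be transported through the other using push-forwards rather than naively identified --- this is exactly the bookkeeping one needs to make the nested-case calculation fully rigorous when the separating edges share a vertex and the outer label is odd.
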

\begin{proof}
The existence and the uniqueness of the functor
\[
\bar{\mathscr{F}}:\bar{\mathscr{G}}=\tilde{\mathscr{G}}[\mathscr{E}^{-1}]\to \mathscr{A}
\]
come from the universal property of the localized category since each edge-twist maps to a partial conjugation which is an isomorphism in $\mathscr{A}$.

Since $\mathscr{G}=\bar{\mathscr{G}}/\sim$ is the quotient category, by the universal property of the quotient category, it suffices to prove that for pull-back diagrams
\begin{align*}
&\begin{tikzcd}[ampersand replacement=\&]
\Gamma \arrow[r, "\alpha"] \arrow[d, "\varepsilon"'] \& \Delta \arrow[d, "\varepsilon'"]\\
\Gamma' \arrow[r, "\alpha"] \& \Delta',
\end{tikzcd}&
&\text{ and }&
\begin{tikzcd}[ampersand replacement=\&]
\Gamma \arrow[r, "\varepsilon_1"] \arrow[d, "\varepsilon_2"'] \& \Gamma_1 \arrow[d, "\varepsilon_2'"]\\
\Gamma_2 \arrow[r, "\varepsilon_1'"] \& \Gamma',
\end{tikzcd}
\end{align*}
the compositions of induced maps are identical in $\mathscr{A}$. Namely,
\begin{align*}
\varepsilon'_\# \alpha_\# &= \alpha_\#\varepsilon_\#,&
&\text{ and }&
{\varepsilon_2'}_\# {\varepsilon_1}_\#&={\varepsilon_1'}_\#{\varepsilon_2}_\#
\end{align*}
in $\Isom_{{\mathscr{A}}}(A_\Gamma,A_\Delta)$.

Let $\alpha, \alpha'$ and $\varepsilon=(e,C), \varepsilon'=(e',C')$ be graph isomorphisms and edge-twists that fit into a pull-back diagram.
We denote by $x_e\in A_\Gamma$ and $x_{e'}\in A_{\Delta}$ the quasi-centers for $e$ and $e'$, respectively. Let $V_2(\varepsilon)$ and  $V_2(\varepsilon')$ be the sets of vertices of $\Gamma_2(\varepsilon)$ and $\Gamma_2(\varepsilon')$, respectively.
By definition of the pull-back or push-forward, two subsets are canonically identified via $\alpha$.

Then the maps $\varepsilon'_\#\alpha_\#$ and $\varepsilon'_\#\alpha_\#$ are defined as follows: for each $v\in V_\Gamma$,
\begin{align*}
(\varepsilon'_\#\alpha_\#)(v)&=\begin{cases}
\alpha(v) & \alpha(v)\in V_1(\varepsilon');\\
x_{e'}^{-1}\alpha(v)x_{e'} & \alpha(v)\not\in V_1(\varepsilon').
\end{cases}\\
(\alpha_\#\varepsilon_\#)(v)&=\begin{cases}
\alpha(v) & v\in V_1(\varepsilon);\\
\alpha_\#(x_{e}^{-1}vx_{e}) & v\not\in V_1(\varepsilon).
\end{cases}
\end{align*}

We observe that since the restriction $\alpha|:V_1(\varepsilon)\to V_1(\varepsilon')$ is a bijection, we have
\[
\alpha(v)\not\in V_1(\varepsilon')\Longleftrightarrow v\not\in V_1(\varepsilon)
\]
and since $\alpha_\#(x_e)=x_{e'}$,
\begin{align*}
\alpha_\#(x_{e}^{-1}vx_{e})&= \alpha_\#(x_{e})^{-1}\alpha(v)\alpha_\#(x_e)= x_{e'}^{-1}\alpha(v)x_{e'}
\end{align*}
as desired.

Let $\varepsilon_1=(e_1,C_1),\varepsilon_1'=(e_1',C_1'), \varepsilon_2=(e_2,C_2)$ and $\varepsilon_2'=(e_2',C_2')$ be edge-twists that fit into a pull-back diagram.
Note that $\varepsilon_1=\varepsilon_1'$ and $\varepsilon_2=\varepsilon_2'$ in $\Ch_{\llbracket\Gamma\rrbracket}$ and so $V_i(\varepsilon_j)=V_i(\varepsilon_j')$ for each $i,j=1,2$. Moreover, we have identifications $x_{e_1}=x_{e_1'}$ and $x_{e_2}=x_{e_2'}$ as words of $V$.

As seen in Remark~\ref{remark:scopes are disjoint or nested}, $V\setminus V_1(\varepsilon_1)$ and $V\setminus V_1(\varepsilon_2)$ are either disjoint or nested.
If $(V\setminus V_1(\varepsilon_1))\cap (V\setminus V_1(\varepsilon_2))=\varnothing$, or equivalently, $V_1(\varepsilon_1)\cup V_1(\varepsilon_2)=V$, then for each $v\in V$,
\begin{align*}
{\varepsilon_2'}_\#{\varepsilon_1}_\#(v)&=
{\varepsilon_1'}_\#{\varepsilon_2}_\#(v)
=\begin{cases}
v & v\in V_1(\varepsilon_1)\cap V_1(\varepsilon_2);\\
x_{e_1}^{-1}vx_{e_1} & v\in V_1(\varepsilon_2)\setminus V_1(\varepsilon_1);\\
x_{e_2}^{-1}vx_{e_2} & v\in V_1(\varepsilon_1)\setminus V_1(\varepsilon_2).
\end{cases}
\end{align*}

On the other hand, if $V\setminus V_1(\varepsilon_1)$ and $V\setminus V_1(\varepsilon_2)$ are nested, then we may assume that $(V\setminus V_1(\varepsilon_2))\subset (V\setminus V_1(\varepsilon_1))$, or equivalently, $V_1(\varepsilon_1)\subset V_1(\varepsilon_2)$.
Hence, ${\varepsilon_2'}_\#$ preserves vertices of $e_1$ and so $x_{e_1}$ as well.
Therefore for each $v\in V$,
\begin{align*}
({\varepsilon_2'}_\#{\varepsilon_1}_\#)(v)
&=\begin{cases}
v & v\in V_1({\varepsilon_1});\\
{\varepsilon_2'}_\#\left(x_{e_1}^{-1}vx_{e_1}\right) & v\not\in V_1({\varepsilon_1}),
\end{cases}\\
&=\begin{cases}
v & v\in V_1({\varepsilon_1});\\
x_{e_1}^{-1}vx_{e_1} & v\in V_1(\varepsilon_2)\setminus V_1(\varepsilon_1);\\
x_{e_1}^{-1}{\varepsilon_2'}_\#(v)x_{\varepsilon_1} & v\not\in V_1(\varepsilon_2),
\end{cases}\\
&=\begin{cases}
v & v\in V_1(\varepsilon_1);\\
x_{e_1}^{-1}vx_{e_1} & v\in V_1(\varepsilon_2)\setminus V_1(\varepsilon_1);\\
x_{e_1}^{-1}x_{e_2}^{-1}vx_{e_2}x_{e_1} & v\not\in V_1(\varepsilon_2).
\end{cases}
\end{align*}

On the other hand, we also have
\begin{align*}
({\varepsilon_1'}_\#{\varepsilon_2}_\#)(v)
&=\begin{cases}
v & v\in V_1(\varepsilon_2);\\
{\varepsilon_1'}_\#\left(x_{e_2}^{-1}vx_{e_2}\right) & v\not\in V_1(\varepsilon_2),
\end{cases}\\
&=\begin{cases}
v & v\in V_1(\varepsilon_1);\\
{\varepsilon_1'}_\#(v) & v\in V_1(\varepsilon_2)\setminus V_1(\varepsilon_1);\\
{\varepsilon_1'}_\#\left(x_{e_2}^{-1}vx_{e_2}\right) & v\not\in V_1(\varepsilon_2),
\end{cases}\\
&=\begin{cases}
v & v\in V_1(\varepsilon_1);\\
x_{e_1}^{-1}vx_{e_1} & v\in V_1(\varepsilon_2)\setminus V_1(\varepsilon_1);\\
\left(x_{e_1}^{-1}x_{e_2}^{-1}x_{e_1}\right)
\left(x_{e_1}^{-1}vx_{e_1}\right)
\left(x_{e_1}^{-1}x_{e_2}x_{e_1}\right) & v\not\in V_1(\varepsilon_2).
\end{cases}
\end{align*}
Therefore,
\[
({\varepsilon_1'}_\#{\varepsilon_2}_\#)(v)=\begin{cases}
v & v\in V_1(\varepsilon_1);\\
x_{e_1}^{-1}vx_{e_1} & v\in V_1(\varepsilon_2)\setminus V_1(\varepsilon_1);\\
x_{e_1}^{-1}x_{e_2}^{-1}vx_{e_2}x_{e_1} & v\not\in V_1(\varepsilon_2),
\end{cases}
\]
which completes the proof.
\end{proof}

\begin{corollary}\label{corollary:fullness}
The functor $\mathscr{F}:\mathscr{G}\to\mathscr{A}$ is full.
\end{corollary}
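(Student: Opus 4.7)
The plan is to verify fullness by showing that each generator of $\Hom_{\mathscr{A}}(A_\Gamma, A_\Delta)$ lies in the image of $\mathscr{F}$. By definition, the category $\mathscr{A}$ is generated by graph isomorphisms and partial conjugations, so it suffices to lift each such generator; arbitrary compositions will then lift automatically by functoriality.

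For the graph-isomorphism generators, the lift is immediate from the construction of $\tilde{\mathscr{F}}$: a graph isomorphism $\alpha_\#: A_{\Gamma'} \to A_{\Delta'}$ is by definition the image $\mathscr{F}(\alpha)$ of the underlying graph isomorphism $\alpha: \Gamma' \to \Delta'$ viewed as a morphism in $\mathscr{G}$.

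For a partial conjugation $\delta_\#$ associated with a decomposition $\delta = (\Gamma'_1, e, \Gamma'_2)$, I would split into two cases. If $\delta$ comes from a single outward edge $\varepsilon \in E^\out(\Ch_{\Gamma'})$ of the chunk tree, then $\mathscr{F}(\varepsilon) = \varepsilon_\# = \delta_\#$ directly. For an arbitrary decomposition over $e$, I would invoke the observation made in the paragraphs leading to \eqref{equation:commutativity of edge-twists}: the edge-twist $\bar\delta$ decomposes as a composition of chunk-tree edge-twists $\bar\varepsilon_{i_1}, \ldots, \bar\varepsilon_{i_k}$ along the edges $(e, C_{i_j})$ adjacent to $e$ in $\Ch_{\Gamma'}$ whose chunks sit on the $\Gamma'_2$-side. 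Since all of these chunk-tree partial conjugations conjugate the relevant generators by the same quasi-center $x_e$, their composition agrees as a group homomorphism with $\delta_\#$, and hence $\delta_\#$ lifts to the composition $\varepsilon_{i_1}\cdots\varepsilon_{i_k}$ in $\mathscr{G}$.

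Assembling these lifts for each factor of a given $\phi = \phi_n \cdots \phi_1 \in \Hom_{\mathscr{A}}(A_\Gamma, A_\Delta)$ yields a morphism $f = f_n \cdots f_1 \in \Hom_{\mathscr{G}}(\Gamma, \Delta)$ with $\mathscr{F}(f) = \phi$. The only part requiring more than bookkeeping is the second case above, namely verifying the factorization of a partial conjugation over an arbitrary decomposition into chunk-tree partial conjugations; this is a short but necessary computation using the formula for partial conjugation and the identification of $x_e$ across the relevant factors, and is the main (routine) obstacle.
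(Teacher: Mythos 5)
Your overall strategy matches the paper's: both amount to observing that $\mathscr{F}$ is surjective on generators of $\mathscr{A}$, and the paper records this in a single sentence. Your lift of graph isomorphisms and of partial conjugations along a single outward chunk-tree edge is exactly right. Where your argument departs from the paper is in your second case, and that is where the gap sits.

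When you factor $\delta_\#$ for a general decomposition $(\Gamma_1',e,\Gamma_2')$ into the chunk-tree partial conjugations over the edges $(e,C_{i_j})$ with $C_{i_j}\subset\Gamma_2'$, you then present $\varepsilon_{i_1}\cdots\varepsilon_{i_k}$ as a morphism of $\mathscr{G}$. But the generating edge-twists of $\tilde{\mathscr{G}}$, and hence of $\mathscr{G}$, are by the paper's standing convention only the \emph{outward} ones in $E^\out(\Ch_{\llbracket\Gamma\rrbracket})$. When $e\neq *_\Gamma$, exactly one edge $(e,C)$ adjacent to $e$ is inward; if that chunk $C$ lies in $\Gamma_2'$, your written lift uses a letter that is not a generator of $\mathscr{G}$, and the argument stops. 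Worse, one can check directly (every morphism in the image of $\mathscr{F}$ acts on the central chunk $*_\Gamma$ only through a graph permutation, by the normal form $\mathcal{E}\alpha$, whereas such a $\delta_\#$ conjugates a vertex of the central chunk by $x_e$ nontrivially) that this $\delta_\#$ is not in the image of $\mathscr{F}$ at all. So either your second case must be restricted to decompositions placing the inward chunk in $\Gamma_1'$, or one should read ``partial conjugations'' in the definition of $\mathscr{A}$ as already restricted to outward chunk-tree edges --- the reading the paper must intend, since Proposition~\ref{proposition:inner and edge-twist} would be false otherwise --- in which case your second case never arises and the proof collapses to the paper's one-liner. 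Either way, as written the case analysis is incomplete: you need to address whether the factoring edges are all outward.
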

\begin{proof}
Every graph isomorphsm and partial conjugation comes essentially from a graph isomorphism and an edge-twist or its formal inverse, which is again a morphism in $\mathscr{G}$ by definition. Hence the induced functor $\bar{\mathscr{F}}:\bar{\mathscr{G}}\to\mathscr{A}$ is full, and so is the functor $\mathscr{F}$ since $\mathscr{G}$ is the quotient category of $\bar{\mathscr{G}}$.
\end{proof}

Recall Proposition~\ref{proposition:graph isomorphism and edge-twists}, which claims that the only identity map can be both a graph isomorphism and an edge-twist in $\mathscr{G}$.
Indeed, we insist a bit stronger statement below which proves Proposition~\ref{proposition:graph isomorphism and edge-twists} as a direct consequence.

\begin{proposition}\label{proposition:graph isomorphism and edge-twists in A}
Let $\alpha,\mathcal{E} \in \Hom_{\mathscr{G}}(\Gamma,\Delta)$ such that $\alpha$ is a graph isomorphism and $\mathcal{E}$ is a composition of edge-twists and their inverses.
If $\mathscr{F}(\alpha)=\mathscr{F}(\mathcal{E})$ in $\Hom_{\mathscr{A}}(A_\Gamma,A_\Delta)$. Then $\Gamma=\Delta$ and both $\alpha$ and $\mathcal{E}$ are the identities.
\end{proposition}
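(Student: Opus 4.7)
The plan is to examine the equation $\alpha(v) = \mathscr{F}(\mathcal{E})(v)$ inside $A_\Delta$ for each vertex $v \in V$. On the left, $\mathscr{F}(\alpha)(v)=\alpha(v)$ is a generator. On the right, an easy induction on the number of factors of $\mathcal{E}$ shows that each partial conjugation sends any vertex either to itself or to $x_e^{-1} v x_e$, so the full composition has the form $\mathscr{F}(\mathcal{E})(v) = g_v^{-1} v g_v$ for some $g_v \in A_\Delta$ which is a specific word in quasi-centers of separating edges, with exponents determined by how often and on which side each edge-twist acts on $v$. My overall strategy is to first reduce using abelianization, and then to pin down $\alpha$ and the exponents of $\mathcal{E}$ simultaneously by induction outward along the chunk tree starting from the center $*_\Gamma$.

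Abelianization is the easy step: conjugation is trivial in any abelian quotient, so $\mathscr{F}(\mathcal{E})^{\mathrm{ab}}$ coincides with the canonical identification $A_\Gamma^{\mathrm{ab}} \cong A_\Delta^{\mathrm{ab}}$, and hence $\mathscr{F}(\alpha)^{\mathrm{ab}}$ must coincide with this identification as well. This forces $\alpha$ to preserve the equivalence classes of $V$ under the odd-edge-path relation on $\Gamma$.

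Next, after rewriting $\mathcal{E}$ in the normal form $\mathcal{E}(\eta) = \prod_{\varepsilon \in E^{\out}(\Ch_\Gamma)} \varepsilon^{\eta(\varepsilon)}$ given by \eqref{equation:normal form}, the outward convention forces every vertex of the central chunk or central separating edge $*_\Gamma$ to lie in $V_1(\varepsilon)$ for every outward $\varepsilon$; hence every such vertex is fixed by each partial conjugation appearing in $\mathscr{F}(\mathcal{E})$. Consequently $\alpha(v) = \mathscr{F}(\mathcal{E})(v) = v$ in $A_\Delta$ for $v \in *_\Gamma$, and the injectivity of the canonical map $V \hookrightarrow A_\Delta$ yields $\alpha(v) = v$ as elements of $V$. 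I would then induct outward along the chunk tree: assuming $\alpha$ already acts as the identity on every chunk within distance $k$ of $*_\Gamma$, a vertex $v$ in a chunk at distance $k+1$ gives $g_v$ as an explicit product of powers of quasi-centers indexed by the separating edges on the geodesic from $v$'s chunk to $*_\Gamma$. The equation $\alpha(v) = g_v^{-1} v g_v$, viewed inside the triangle-free large-type Artin group $A_\Delta$, should force $\alpha(v) = v$ and the relevant exponents $\eta(\varepsilon)$ to vanish. Iterating along the tree gives $\alpha = \mathrm{id}$, hence $\Gamma = \Delta$, and $\mathcal{E} = \mathrm{id}$ in $\mathscr{G}$.

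The main obstacle is the algebraic lemma underlying the inductive step: showing that inside a \CLTTF Artin group, a generator cannot be written as a non-trivial conjugate of another generator by a product of quasi-centers of separating edges. This will require a careful normal-form or Garside-type argument, relying on triangle-freeness to ensure that no vertex outside a separating edge $e=\{s,t\}$ is adjacent to both $s$ and $t$, so that non-trivial conjugation by a power of $x_e$ introduces letters that cannot cancel back to a single generator. Once this lemma is in hand, the inductive step goes through and the stated conclusion follows directly.
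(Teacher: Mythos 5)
Your overall strategy lines up with the paper's: both inductions propagate outward along the chunk tree, starting from the observation that vertices of the central piece $*_\Gamma$ are fixed (since they never lie in $V_2(\varepsilon)$ for an outward $\varepsilon$), and then killing the edge-twist exponents on successively farther chunks. The preliminary abelianization step is not in the paper and, while correct, only tells you $\alpha$ preserves odd-path equivalence classes; it does not meaningfully reduce the work.

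The genuine gap is exactly the item you flag as the ``main obstacle,'' and it is not a cosmetic one. First, the lemma as you phrase it --- a generator $v$ cannot be a nontrivial conjugate of another generator by a product of quasi-centers of separating edges --- cannot be quite what is needed, since even if proved it would only give $\alpha(v)=v$ with $g_v$ centralizing $v$, and you still would need a separate argument that this forces the individual exponents $\eta(\varepsilon)$ to vanish (centralizing a single $v$ is far from forcing $g_v=1$). Second, the paper does not work at the level of single generators at all: it picks an \emph{edge} $f=\{v,w\}$ in a farthest chunk, disjoint from the separating edge $e$, and compares the two rank-two parabolic subgroups $G(\alpha(f))$ and $x_e^{-a}G(f)x_e^{a}$. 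The key inputs are (i) that distinct rank-two standard parabolics of a \CLTTF Artin group are never conjugate, which forces $\alpha(f)=f$, and (ii) the structure of the normalizer of $G(f)$ (citing Godelle), which together with $G(e)\cap G(f)=1$ forces $a=0$. These are substantial algebraic facts about \CLTTF Artin groups, and your proposal neither proves nor even names them; triangle-freeness alone and a vague ``normal-form or Garside-type argument'' do not obviously deliver them. So the proposal identifies the right skeleton but leaves the load-bearing step unresolved and in a form weaker than what the argument actually requires.
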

\begin{proof}
Let us assume that $\mathcal{E}=\mathcal{E}(\eta)$ for some $\eta:E^\out(\Ch_\Gamma)\to \mathbb{Z}$.
Unless $\alpha$ is the identity, the induced automorphism $\mathscr{F}(\alpha)$ can not be the identity. Hence, it suffices to prove that $\mathcal{E}$ is the identity.

We use the induction on $\operatorname{Diam}(\Ch_\Gamma)$, which is always even.
If $\operatorname{Diam}(\Ch_\Gamma)=0$, then there are no edges in $E^\out(\Ch_\Gamma)$ and therefore $\mathcal{E}$ must be trivial.
If $\operatorname{Diam}(\Ch_\Gamma)=2$, then the center $*_\Gamma$ must be a separating edge $e$ since the leaves in $\Ch_\Gamma$ are chunks.
As above, all edge-twists induce partial conjugations that fix both $s$ and $t$. 
Suppose that $\eta(\varepsilon)=a\neq 0$ for some $\varepsilon=(e, C)\in E^\out(\Ch_\Gamma)$.
Since $C$ is triangle-free and edge-separated, we can pick an edge $f=\{v,w\}\in E(C)$ such that $e\cap f=\varnothing$.
Then we have
\[
\mathscr{F}(\alpha)(\{v,w\}) = \{\alpha(v),\alpha(w)\}= \{x_e^{-a} v x_e^a, x_e^{-a} w x_e^a\}=\mathscr{F}(\mathcal{E})(\{v,w\}),
\]
which generate subgroups $G(\alpha(f))$ and $x_e^{-a}G(f)x_e$.
Unless $f=\alpha(f)$, these two subgroups can not be the same since $G(f)$ and $G(\alpha(f))$ are not conjugate to each other.
Hence we have $f=\alpha(f)$ and so $x_e^a$ is in the normalizer of $G(f)$, which is a power of $x_f$.
This is a contradiction since $G(e)\cap G(f)$ is trivial and so $a$ must be $0$.

Suppose that the assertion holds for every $\Gamma$ with $\operatorname{Diam}(\Ch_\Gamma)\le 2N$. We assume that \[\operatorname{Diam}(\Ch_\Gamma)=2N+4.\]

Let $\Gamma'\subset \Gamma$ and $\Delta'\subset\Delta$ be the subgraphs which are unions of chunks in $\Gamma$ and $\Delta$ within a distance $(N+1)$ (or $N$\footnote{Since the vertices at the distance $(N+1)$ from $*_\Gamma$ correspond to separating edges which are already contained in chunks at the distance $N$ from $*_\Gamma$.}) from the center $*_\Gamma$ and $*_\Delta$, respectively.
Then since both $\alpha$ and $\mathcal{E}$ induce isomorphisms on $(\Ch_\Gamma,*_\Gamma)\to (\Ch_\Delta,*_\Delta)$, their restrictions
\[
\alpha|:\Gamma'\to\Delta'\quad\text{ and }\quad
\mathcal{E}|:\Gamma'\to\Delta'
\]
are well-defined so that $\alpha|$ is a graph isomorphism and $\mathcal{E}|$ is a composition of edge-twists again.
Furthermore, they induce the same maps so that $\mathscr{F}(\alpha|) = \mathscr{F}(\mathcal{E})$ in $\Hom_{\mathscr{A}}(A_{\Gamma'},A_{\Delta'})$ and therefore both $\alpha|$ and $\mathcal{E}|$ must be the identity by the induction hypothesis.

Hence the only possibility for $\mathcal{E}$ is a composition of edge-twists involving chunks which are farthest from the center $*_\Gamma$.
Suppose that there exists an edge $\varepsilon=(e,C)\in E^\out(\Ch_\Gamma)$ with $\eta(\varepsilon)=a\neq 0$ involving a farthest chunk $C$.
Then the exactly same argument as above yields a contradiction and therefore $\mathcal{E}$ must be trivial.
\end{proof}

\begin{proof}[Proof of Proposition~\ref{proposition:graph isomorphism and edge-twists}]
Let $\alpha,\mathcal{E}$ be two morphisms satisfying the hypothesis.
If $\alpha=\mathcal{E}$, then $\mathscr{F}(\alpha)=\mathscr{F}(\mathcal{E})$ and so both $\alpha$ and $\mathcal{E}$ are the identities by Proposition~\ref{proposition:graph isomorphism and edge-twists in A}.
\end{proof}

Moreover, Proposition~\ref{proposition:graph isomorphism and edge-twists in A} also implies the faithfulness of the functor $\mathscr{F}$ as follows:
\begin{corollary}\label{corollary:faithfulness}
Let $f,g\in\Hom_{\mathscr{G}}(\Gamma,\Delta)$.
If $\mathscr{F}(f)=\mathscr{F}(g)\in\Hom_{\mathscr{A}}(A_\Gamma,A_\Delta)$, then $f=g$.

In other words, the functor $\mathscr{F}:\mathscr{G}\to\mathscr{A}$ is faithful.
\end{corollary}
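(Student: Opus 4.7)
The plan is to reduce Corollary~\ref{corollary:faithfulness} directly to Proposition~\ref{proposition:graph isomorphism and edge-twists in A} by invoking the normal form given by Theorem~\ref{theorem:normal form in G}. Given $f, g \in \Hom_{\mathscr{G}}(\Gamma, \Delta)$ with $\mathscr{F}(f)=\mathscr{F}(g)$, I would first apply Theorem~\ref{theorem:normal form in G} to write
\[
f = \mathcal{E}_f\,\alpha_f, \qquad g = \mathcal{E}_g\,\alpha_g,
\]
where $\alpha_f\colon\Gamma\to\Gamma_f$ and $\alpha_g\colon\Gamma\to\Gamma_g$ are graph isomorphisms, and $\mathcal{E}_f\colon\Gamma_f\to\Delta$ and $\mathcal{E}_g\colon\Gamma_g\to\Delta$ are compositions of edge-twists and their inverses. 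Since graph isomorphisms are invertible in $\mathscr{G}$ (via relation \eqref{equation:commutativity 1} applied to the permutation inverse) and edge-twists become invertible after localization, the composition $\mathcal{E}_g^{-1}\mathcal{E}_f\colon\Gamma_f\to\Gamma_g$ and the graph isomorphism $\alpha_g\alpha_f^{-1}\colon\Gamma_f\to\Gamma_g$ are well-defined morphisms in $\mathscr{G}$.

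Next, applying the functor $\mathscr{F}$ and using its functoriality, the hypothesis $\mathscr{F}(f)=\mathscr{F}(g)$ rearranges to
\[
\mathscr{F}\bigl(\mathcal{E}_g^{-1}\mathcal{E}_f\bigr)
= \mathscr{F}\bigl(\alpha_g\alpha_f^{-1}\bigr)
\quad\text{in } \Hom_{\mathscr{A}}(A_{\Gamma_f}, A_{\Gamma_g}).
\]
Now Proposition~\ref{proposition:graph isomorphism and edge-twists in A} applies to the pair $\bigl(\alpha_g\alpha_f^{-1},\ \mathcal{E}_g^{-1}\mathcal{E}_f\bigr)$, since the first is a graph isomorphism and the second is a composition of edge-twists and their inverses. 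This forces $\Gamma_f=\Gamma_g$ and both $\alpha_g\alpha_f^{-1}$ and $\mathcal{E}_g^{-1}\mathcal{E}_f$ to be identities in $\mathscr{G}$; that is, $\alpha_f=\alpha_g$ and $\mathcal{E}_f=\mathcal{E}_g$. By the uniqueness clause of Theorem~\ref{theorem:normal form in G}, this yields $f=g$.

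The genuine content has already been absorbed into Proposition~\ref{proposition:graph isomorphism and edge-twists in A} (which was proved by an induction on $\operatorname{Diam}(\Ch_\Gamma)$ exploiting the triangle-free condition and the non-conjugacy of distinct edge-generated subgroups $G(f)$), so the only potential obstacle here is bookkeeping: one must confirm that the rearrangement from $\mathcal{E}_f\alpha_f=\mathcal{E}_g\alpha_g$ to $\mathcal{E}_g^{-1}\mathcal{E}_f=\alpha_g\alpha_f^{-1}$ is legitimate in $\mathscr{G}$, which follows because both graph isomorphisms and edge-twists are invertible in $\mathscr{G}$, and $\mathscr{F}$ preserves these inverses as a functor.
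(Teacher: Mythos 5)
Your proposal is correct and follows essentially the same route as the paper: write each morphism in the normal form $\mathcal{E}\alpha$, rearrange $\mathscr{F}(f)=\mathscr{F}(g)$ into an equality of a pure edge-twist part and a pure graph-isomorphism part, and invoke Proposition~\ref{proposition:graph isomorphism and edge-twists in A}. The only cosmetic difference is that the paper cites only the existence statement~\eqref{equation:normal form}, and once $\alpha_f=\alpha_g$ and $\mathcal{E}_f=\mathcal{E}_g$ are obtained the conclusion $f=g$ is immediate, so your final appeal to the uniqueness clause of Theorem~\ref{theorem:normal form in G} is redundant (though harmless).
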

\begin{proof}
Let $f, g\in\Hom_{\mathscr{G}}(\Gamma,\Delta)$.
Then as observed in \eqref{equation:normal form}, $f$ and $g$ can be expressed as compositions
\begin{align*}
f&=\mathcal{E}\alpha,&
\mathcal{E}&=\mathcal{E}(\eta),&
g&=\mathcal{E}'\alpha',&
\mathcal{E}'&=\mathcal{E}(\eta')
\end{align*}
for some $\eta,\eta':E^\out(\Ch_{\llbracket\Gamma\rrbracket})\to\mathbb{Z}$.
Then since $\mathscr{F}(f)=\mathscr{F}(g)$, by pre-composition of $\mathscr{F}(\alpha)^{-1}$ and post-composition of $\mathscr{F}(\mathcal{E}')$, we have
\begin{align*}
\mathscr{F}(\mathcal{E}'^{-1} f\alpha^{-1})
&=\mathscr{F}(\mathcal{E}')^{-1} \mathscr{F}(f)\mathscr{F}(\alpha)^{-1}
=\mathscr{F}(\mathcal{E}')^{-1} \mathscr{F}(g)\mathscr{F}(\alpha)^{-1}
=\mathscr{F}(\mathcal{E}'^{-1} g \alpha^{-1}).
\end{align*}
However, the left hand side is the induced map of edge-twists
$\mathscr{F}(\mathcal{E}'^{-1} f\alpha^{-1})=\mathscr{F}(\mathcal{E}'^{-1}\mathcal{E})$ while
the right hand side is the induced map of graph isomorphisms
$\mathscr{F}(\mathcal{E}'^{-1} g \alpha^{-1})= \mathscr{F}(\alpha'\alpha^{-1})$.
Then by Proposition~\ref{proposition:graph isomorphism and edge-twists in A}, we must have
\[
\mathcal{E}'^{-1}\mathcal{E} = \alpha'\alpha^{-1}=\operatorname{Id}\in\Hom_{\mathscr{G}}(\Gamma,\Gamma),
\]
and therefore $\alpha'=\alpha$ and $\mathcal{E}'=\mathcal{E}$, which implies that $f=g$.
\end{proof}

In summary, we have the following theorem.
\begin{theorem}\label{theorem:equivalence of categories}
The induced functor $\mathscr{F}:\mathscr{G}\to\mathscr{A}$ is an equivalence of categories.
\end{theorem}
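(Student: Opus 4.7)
The plan is to assemble what has already been proved in the corollaries leading up to this theorem. Recall that a functor $\mathscr{F}:\mathscr{G}\to\mathscr{A}$ is an equivalence of categories exactly when it is essentially surjective on objects and fully faithful on morphisms. Thus the proof amounts to checking these three properties in turn, each of which is either tautological from the construction of $\mathscr{F}$ or already recorded earlier.

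First I would dispatch essential surjectivity, which is immediate and in fact stronger: by the very definition of the objects of $\mathscr{A}$, every \CLTTF Artin group is of the form $A_\Gamma$ for some edge-separated \CLTTF graph $\Gamma$, and $\mathscr{F}(\Gamma)=A_\Gamma$ by construction. So $\mathscr{F}$ is surjective on objects, and in particular essentially surjective.

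Next, fullness is precisely the content of Corollary~\ref{corollary:fullness}: every generator of $\Hom_{\mathscr{A}}(A_\Gamma,A_\Delta)$ (a graph isomorphism or a partial conjugation) lifts to a generator of $\Hom_{\mathscr{G}}(\Gamma,\Delta)$ (a graph isomorphism or an edge-twist, respectively), so any composition in $\mathscr{A}$ is in the image of $\mathscr{F}$. Faithfulness is Corollary~\ref{corollary:faithfulness}, whose proof goes through the normal form \eqref{equation:normal form} $f=\mathcal{E}\alpha$ and the key rigidity statement Proposition~\ref{proposition:graph isomorphism and edge-twists in A}, which in turn is proved by induction on $\operatorname{Diam}(\Ch_\Gamma)$.

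With these ingredients in hand, the proof is a one-line assembly: combine essential surjectivity with Corollaries~\ref{corollary:fullness} and~\ref{corollary:faithfulness}. The real work has already been done; in particular the main obstacle was faithfulness, which rested on showing that no nontrivial composition of edge-twists can equal a graph isomorphism on the Artin-group level — this was handled inductively by isolating a farthest chunk from the center of the chunk tree and exploiting the triangle-free and edge-separated hypotheses together with the fact that the parabolic subgroups $G(e)$ and $G(f)$ of non-conjugate edges intersect trivially.
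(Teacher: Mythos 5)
Your proposal matches the paper's proof: essential surjectivity is immediate from the construction of $\mathscr{F}$ on objects, and full faithfulness is exactly the content of Corollary~\ref{corollary:fullness} and Corollary~\ref{corollary:faithfulness}, which is how the paper assembles it. The extra recap of how faithfulness was established (normal form, Proposition~\ref{proposition:graph isomorphism and edge-twists in A}, induction on chunk-tree diameter) is accurate but not new to this step.
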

\begin{proof}
In order to show that $\mathscr{F}$ is an equivalence, we will show (i) the essential surjectivity, and (ii) the fully-faithfulness.

\noindent (i) By definition, every object in $\mathscr{A}$ is an Artin group presentation for a \CLTTF graph and every morphism in ${\mathscr{A}}$ is an isomorphism. Hence the essential surjectivity is obvious.

\noindent (ii) The fully-faithfulness comes from Corollaries~\ref{corollary:fullness} and \ref{corollary:faithfulness}, and we are done.
\end{proof}

Before closing this section, we will prove the equivalence of categories between $\mathscr{G}$ and the subcategory of the groupoid defined in \cite{Cr05}.
Let $\mathcal{I}so$ be the category of edge-separated $\CLTTF$ graphs \emph{up to isomorphism} whose morphisms are the set of all group isomorphisms $A_\Gamma\rightarrow A_\Gamma'$.

Then by \cite[Theorem~1]{Cr05}, the morphisms in $\mathcal{I}so$ are generated by graph automorphisms, leaf and global inversions, inner automorphisms, and partial conjugations.

We define the subcategory $\mathcal{I}so_0$ of $\mathcal{I}so$ whose morphisms are generated by graph automorphisms and partial conjugations.
Then we will show an equivalence between two categories $\mathscr{G}$ and $\mathcal{I}so$.

For each $\Gamma\in\mathscr{G}$.
Let us denote $[\Gamma]$ the graph isomorphism class of $\Gamma$ in $\mathscr{G}$.
\[
[\Gamma]\coloneqq\{
\Delta\in\mathscr{G}\mid \Delta\cong\Gamma\}
\]
Then as a set, $[\mathscr{G}]$ is defined to be the set of isomorphism classes, or equivalently, the set of \CLTTF graphs up to isomorphism.

For each isomorphism class $[\Gamma]$, we fix a representative $\Gamma_0\in[\Gamma]$.
Moreover, for each $\Delta\in[\Gamma]$, we also fix a graph isomorphism
$\alpha_\Delta:\Gamma_0\to \Delta$.
Obviously, $\alpha_\Delta=\operatorname{Id}$ if and only if $\Delta$ represents its isomorphism class.

Now we define a functor $[\cdot]:\mathscr{G}\to\mathcal{I}so_0$ as follows:
For each \CLTTF graph $\Gamma$,
\[
[\cdot]:\Gamma\mapsto [\Gamma],
\]
where $[\Gamma]$ is the graph isomorphism class of $\Gamma$.

For each graph isomorphism $\alpha:\Gamma\to\Delta$, we have a graph automorphism
$\alpha_{\Delta}^{-1}\alpha\alpha_\Gamma:\Gamma_0\to\Gamma_0$, which induces an isomorphism
\[
[\alpha]\coloneqq(\alpha_{\Delta}^{-1}\alpha\alpha_\Gamma)_\#:A_{\Gamma_0}\to A_{\Gamma_0}.
\]
Here $\Gamma_0$ is the chosen representive of $[\Gamma]=[\Delta]$.

For each edge-twist $\varepsilon:\Gamma\to\Delta$, we have a composition
$\alpha_\Delta^{-1}\varepsilon\alpha_\Gamma
:\Gamma_0\to \Delta_0$,
which induces an isomorphism
\[
[\varepsilon]\coloneqq
(\alpha_\Delta)_\#^{-1}\varepsilon_\#(\alpha_\Gamma)_\#
:A_{\Gamma_0}\to A_{\Delta_0}.
\]
Here $\Gamma_0$ and $\Delta_0$ are the chosen representative of $[\Gamma]$ and $[\Delta]$, respectively.

\begin{theorem}
The functor $[\cdot]:\mathscr{G}\to\mathcal{I}so_0$ is an equivalence of categories.
\end{theorem}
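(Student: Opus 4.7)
The plan is to reduce the statement to Theorem~\ref{theorem:equivalence of categories} by recognizing $\mathcal{I}so_0$ as a skeleton of $\mathscr{A}$. Using the chosen representatives $\Gamma_0\in[\Gamma]$ and the reference isomorphisms $\alpha_\Gamma:\Gamma_0\to\Gamma$, I would first define a skeletalization functor $\mathsf{sk}:\mathscr{A}\to\mathcal{I}so_0$ by $A_\Gamma\mapsto [\Gamma]$ on objects and $f\mapsto (\alpha_\Delta)_\#^{-1}\, f\, (\alpha_\Gamma)_\#$ on a morphism $f:A_\Gamma\to A_\Delta$. Functoriality is automatic from the associativity of conjugation; the induced map on each hom-set is a bijection with inverse $h\mapsto (\alpha_\Delta)_\#\, h\, (\alpha_\Gamma)_\#^{-1}$, so $\mathsf{sk}$ is fully faithful, and it is essentially surjective by the very definition of the objects of $\mathcal{I}so_0$. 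Hence $\mathsf{sk}$ is an equivalence.

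Next I would check that $[\cdot]=\mathsf{sk}\circ\mathscr{F}$ by comparing both sides on the generators of $\mathscr{G}$. For a graph isomorphism $\alpha:\Gamma\to\Delta$, one has
\[
\mathsf{sk}(\mathscr{F}(\alpha)) = (\alpha_\Delta)_\#^{-1}\,\alpha_\#\,(\alpha_\Gamma)_\# = (\alpha_\Delta^{-1}\alpha\alpha_\Gamma)_\# = [\alpha],
\]
and for an edge-twist $\varepsilon:\Gamma\to\Delta$,
\[
\mathsf{sk}(\mathscr{F}(\varepsilon)) = (\alpha_\Delta)_\#^{-1}\,\varepsilon_\#\,(\alpha_\Gamma)_\# = [\varepsilon].
\]
Since every morphism in $\mathscr{G}$ is a composition of such generators, the identity $[\cdot]=\mathsf{sk}\circ\mathscr{F}$ will extend to the whole category, provided that $[\cdot]$ is indeed a functor. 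As both $\mathsf{sk}$ and $\mathscr{F}$ (by Theorem~\ref{theorem:equivalence of categories}) are equivalences, their composition will be an equivalence, and we will be done.

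The one genuine check is thus the well-definedness of $[\cdot]$ on the quotient of the localization $\tilde{\mathscr{G}}[\mathscr{E}^{-1}]/\!\sim$. Invertibility of $[\varepsilon]$ in $\mathcal{I}so_0$ is automatic, since a partial conjugation is an Artin group isomorphism. The relations \eqref{equation:commutativity 1} and \eqref{equation:commutativity 2} must be sent to identities in $\mathcal{I}so_0$, but these are precisely the calculations already performed in the proof of Proposition~\ref{proposition:existence}, merely pre- and post-composed with the fixed references $(\alpha_\Gamma)_\#$ and $(\alpha_\Delta)_\#^{-1}$. Since conjugation preserves equality, the relations transport verbatim. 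The main obstacle in writing this up is thus the bookkeeping of the reference conjugations, but it collapses to the computation already done for Proposition~\ref{proposition:existence}, so no genuinely new argument is needed.
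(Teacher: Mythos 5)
Your proposal is correct, and it takes a slightly tidier route than the paper. The paper's own proof is very terse: it simply cites the definition of $\mathcal{I}so_0$, Crisp's Theorem~1, Theorem~\ref{theorem:equivalence of categories}, and Proposition~\ref{proposition:graph isomorphism and edge-twists in A}, leaving the reader to assemble these into well-definedness, fullness, and faithfulness of $[\cdot]$. You instead name the formal ``change of basepoint'' conjugation $\mathsf{sk}:\mathscr{A}\to\mathcal{I}so_0$ and observe the factorization $[\cdot]=\mathsf{sk}\circ\mathscr{F}$, after which the whole statement collapses to Theorem~\ref{theorem:equivalence of categories}. This is the same set of ingredients, but the explicit factorization is cleaner: it subsumes well-definedness, fullness, and faithfulness of $[\cdot]$ all at once, and it explains transparently why Proposition~\ref{proposition:graph isomorphism and edge-twists in A} enters (it is precisely what makes $\mathscr{F}$ faithful).

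Two small caveats worth making explicit if you write this up. First, to see that $\mathsf{sk}$ actually lands in the subcategory $\mathcal{I}so_0\subset\mathcal{I}so$ rather than all of $\mathcal{I}so$, one should write a morphism $f\in\Hom_{\mathscr{A}}(A_\Gamma,A_\Delta)$ as a composition $g_n\cdots g_1$ of graph isomorphisms and partial conjugations and then interleave the telescoping reference isomorphisms, so that $\mathsf{sk}(f)=\prod_i\bigl((\alpha_{\Gamma_i})_\#^{-1}g_i(\alpha_{\Gamma_{i-1}})_\#\bigr)$, with each factor being either a graph automorphism of a chosen representative or a reference-conjugated partial conjugation — exactly the generators of $\mathcal{I}so_0$. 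This is the step that makes the claimed bijection on hom-sets land on both sides where it should. Second, once you have $\mathsf{sk}\circ\mathscr{F}$ as a functor agreeing with the paper's formulas for $[\alpha]$ and $[\varepsilon]$ on generators, the well-definedness of $[\cdot]$ on $\tilde{\mathscr{G}}[\mathscr{E}^{-1}]/\!\sim$ is automatic; your last paragraph re-verifying that relations \eqref{equation:commutativity 1} and \eqref{equation:commutativity 2} transport is therefore a harmless but redundant double-check.
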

\begin{proof}
By definition of $\mathcal{I}so_0$, Theorem~1 in \cite{Cr05} and Theorem~\ref{theorem:equivalence of categories}, the functor $[\cdot]$ is well-defined, surjective, and full.
The faithfulness follows obviously from Proposition~\ref{proposition:graph isomorphism and edge-twists in A} as well.
\end{proof}

\section{The automorphism group \texorpdfstring{$\Aut(A_\Gamma)$}{}}
\label{section:automorphism groups}
In this section, we will provide finite presentations for both $\Aut(A_\Gamma)$ and $\Out(A_\Gamma)$.
To this end, we first analyze the automorphism group $\Aut(A_\Gamma)$, which is generated by $\Aut_{\mathscr{A}}(A_\Gamma)$, the inner automorphism group $\Inn(A_\Gamma)$ and the global inversion $\iota:A_\Gamma\to A_\Gamma$, and define a pairing between elements in $\Iso(\Gamma)$ called a \emph{twisted intersection product}.

\subsection{Preliminaries}
\subsubsection{Positive automorphisms}
For each $f\in \Aut(A_\Gamma)$, let us consider the induced map $H_1(f)$ on the abelianization $H_1(A_\Gamma)$
\[
H_1(f):H_1(A_\Gamma)\to H_1(A_\Gamma),
\]
which is either a permutation of vertices or a composition of the global inversion and a permutation of vertices.
Namely, there exists $\sgn(f)=\pm1$ such that for each $v\in V$, 
\[
H_1(f)([v])=\sgn(f)[w]\in H_1(A_\Gamma)
\]
for some $w\in V_\Gamma$. 
We say that $f$ is \emph{positive} if $\sgn(f)=1$.
Then $\sgn$ defines a group homomorphism
\[
\sgn:\Aut(A_\Gamma)\to \mathbb{Z}_2,
\]
whose kernel is the subgroup of positive automorphisms and denoted by $\Aut_+(A_\Gamma)\coloneqq\ker(\sgn)$. Then
\[
\Inn(A_\Gamma)\lhd\Aut_+(A_\Gamma)\lhd \Aut(A_\Gamma)
\]
and so we define the group $\Out_+(A_\Gamma)$ as the quotient
\[
\Out_+(A_\Gamma)\coloneqq\Aut_+(A_\Gamma)\big/\Inn(A_\Gamma).
\]

Obviously, the global inversion $\left(\iota:A_\Gamma\to A_\Gamma\right)\in \Aut(A_\Gamma)$ is not contained in $\Aut_+(A_\Gamma)$ and acts on inner automorphisms, partial conjugations and graph isomorphisms by conjugation.
More precisely, for each isomorphism $\alpha_\#, \varepsilon_\#:A_\Gamma\to A_\Delta$ coming from a graph isomorphism $\alpha$ and an edge-twist $\varepsilon$, or inner automorphism $g_\#:A_\Gamma\to A_\Gamma$, we have
\begin{align*}
\alpha_\#\iota &= \iota\alpha_\#,&
\varepsilon_\#^{-1}\iota &= \iota\varepsilon_\#,&
\bar{g}_\#^{-1}\iota &= \iota g_\#,
\end{align*}
where $\bar{g} = v_k\cdots v_1$ is the reverse of $g=v_1\cdots v_k$ with $v_i\in V$.
Therefore the following lemmas are immediate consequences.

\begin{lemma}\label{lemma:big commutative diagram}
There is a commutative diagram with exact rows and columns
\[
\begin{tikzcd}
& 1\arrow[d] & 1\arrow[d]\\
& \Inn(A_\Gamma)\arrow[d]\arrow[r,equal] & \Inn(A_\Gamma)\arrow[d] &  \\
1\arrow[r]& \Aut_+(A_\Gamma)\arrow[d]\arrow[r]& \Aut(A_\Gamma)\arrow[d]\arrow[r, "\sgn"]& \mathbb{Z}_2\arrow[d,equal]\arrow[r]& 1\\
1\arrow[r]& \Out_+(A_\Gamma)\arrow[d]\arrow[r]& \Out(A_\Gamma)\arrow[d]\arrow[r, "\sgn"]& \mathbb{Z}_2\arrow[r]& 1\\
& 1  & 1 
\end{tikzcd}
\]
where the group $\mathbb{Z}_2\cong\langle \iota\mid \iota^2\rangle$ is generated by the global inversion $\iota$.
Therefore the rows split and
\begin{align*}
\Aut(A_\Gamma)&\cong \Aut_+(A_\Gamma) \rtimes \mathbb{Z}_2,&
\Out(A_\Gamma)&\cong \Out_+(A_\Gamma) \rtimes \mathbb{Z}_2.
\end{align*}
\end{lemma}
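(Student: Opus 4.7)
The plan is to verify that the sign map $\sgn\colon\Aut(A_\Gamma)\to\mathbb{Z}_2$ is a well-defined surjective group homomorphism whose kernel $\Aut_+(A_\Gamma)$ contains $\Inn(A_\Gamma)$, and then to read the remaining diagram structure and the splittings directly off the properties of $\iota$.

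First, I would confirm that $\sgn$ even makes sense. By Crisp's theorem (\cite{Cr05}), every $f\in\Aut(A_\Gamma)$ is a composition of graph isomorphisms, inner automorphisms, partial conjugations, and the global inversion $\iota$. On the abelianization $H_1(A_\Gamma)$ these four generator types act, respectively, as a permutation of the vertex classes $[v]$, as the identity, as the identity (conjugation dies in the abelianization), and as global negation. Hence for any $f$ the induced map $H_1(f)$ has the form $[v]\mapsto \epsilon\,[w]$ for a single common sign $\epsilon\in\{\pm1\}$, and setting $\sgn(f):=\epsilon$ defines a group homomorphism by functoriality of $H_1$; surjectivity is immediate from $\sgn(\iota)=-1$.

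Second, once $\sgn$ is in hand, the commutative diagram is essentially definitional. The top row is the defining short exact sequence of $\Aut_+(A_\Gamma):=\ker(\sgn)$; the inclusion $\Inn(A_\Gamma)\subset\Aut_+(A_\Gamma)$ is clear because inner automorphisms act trivially on $H_1$; the middle column is the defining sequence of $\Out(A_\Gamma)$; and the snake lemma (or a direct verification) yields exactness of the bottom row with $\Out_+(A_\Gamma):=\Aut_+(A_\Gamma)/\Inn(A_\Gamma)$.

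Third, for the two splittings I would use $\iota$ itself. Since $\iota^{2}=\operatorname{Id}$ and $\sgn(\iota)=-1$, the subgroup $\langle\iota\rangle\cong\mathbb{Z}_2$ is a group-theoretic section of $\sgn$, which gives $\Aut(A_\Gamma)\cong\Aut_+(A_\Gamma)\rtimes\mathbb{Z}_2$. Because $\sgn(\iota)\neq 1$ forces $\iota\notin\Inn(A_\Gamma)$, the image of $\iota$ in $\Out(A_\Gamma)$ is still an element of order $2$, providing the second splitting. The commutation identities $\alpha_\#\iota=\iota\alpha_\#$, $\varepsilon_\#^{-1}\iota=\iota\varepsilon_\#$, and $\bar g_\#^{-1}\iota=\iota g_\#$ recorded just before the lemma describe how $\iota$ conjugates the various generators of $\Aut_+$ and $\Inn$, making the semidirect-product structures explicit. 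The main obstacle is only the well-definedness of $\sgn$, which rests on Crisp's classification of automorphisms; granted that input, everything else is a mechanical diagram chase.
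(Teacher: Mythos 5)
Your proof is correct and follows essentially the same route as the paper: define $\sgn$ via the action on $H_1(A_\Gamma)$ (well-defined by Crisp's generator classification), observe $\Inn(A_\Gamma)\subset\ker(\sgn)=\Aut_+(A_\Gamma)$, and use $\iota$ as an explicit order-two section to obtain both splittings. The paper treats this lemma as an immediate consequence of the preceding discussion of $\sgn$ and the commutation identities for $\iota$; you simply spell out the same diagram chase in more detail.
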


\begin{lemma}\label{lemma:positive automorphisms}
An automorphism $f$ is in $\Aut_+(A_\Gamma)$ if and only if it is a composition of inner automorphisms, partial conjugations and graph isomorphisms.
\end{lemma}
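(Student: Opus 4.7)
\medskip

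The plan is to prove both implications by a direct generator-level analysis, using the commutation identities between $\iota$ and the three remaining generator types that were recorded just above the lemma.

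For the easy direction, I would compute $\sgn$ on each of the three allowed generator types. A graph isomorphism $\alpha_\#$ sends $v$ to $\alpha(v)\in V$, so $H_1(\alpha_\#)[v]=[\alpha(v)]$ and $\sgn(\alpha_\#)=+1$. An inner automorphism $g_\#$ acts trivially on $H_1$ since commutators die in the abelianization, so $\sgn(g_\#)=+1$. A partial conjugation $\varepsilon_\#$ sends each $v$ either to $v$ or to $x_e^{-1}vx_e$, both of which represent $[v]$ in $H_1(A_\Gamma)$; hence $\sgn(\varepsilon_\#)=+1$. Since $\sgn:\Aut(A_\Gamma)\to\mathbb{Z}_2$ is a homomorphism, any composition of these three types lies in $\ker(\sgn)=\Aut_+(A_\Gamma)$.

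For the converse, let $f\in\Aut_+(A_\Gamma)$. By Crisp's generating theorem (restated in Section~3.1 together with Assumption~\ref{assumption:edge-separated} which eliminates leaf inversions), $f$ can be written as a finite word
\[
f=w_1w_2\cdots w_n,
\]
where each $w_i$ is either a graph isomorphism, a partial conjugation, an inner automorphism, or the global inversion $\iota$. Let $k$ be the number of indices $i$ with $w_i=\iota$. Since $\sgn$ is multiplicative, $\sgn(f)=(-1)^k$; and $f\in\Aut_+(A_\Gamma)$ forces $k$ to be even.

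Now I use the commutation identities
\[
\alpha_\#\,\iota=\iota\,\alpha_\#,\qquad \varepsilon_\#\,\iota=\iota\,\varepsilon_\#^{-1},\qquad g_\#\,\iota=\iota\,\bar{g}_\#^{-1},
\]
which allow an occurrence of $\iota$ to be moved one position to the right past any neighboring generator of the other three types at the cost of possibly replacing that generator by another generator of the same type (partial conjugations are closed under inversion, and inner automorphisms are closed under the operation $g_\#\mapsto \bar{g}_\#^{-1}$). Repeatedly applying these identities, I can transport all $k$ copies of $\iota$ to the right end of the word, obtaining an expression
\[
f=w_1'w_2'\cdots w_{n-k}'\,\iota^{k},
\]
where each $w_i'$ is again an inner automorphism, a partial conjugation, or a graph isomorphism. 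Since $k$ is even and $\iota^2=\mathrm{Id}$, the tail $\iota^k$ collapses, and $f$ is exhibited as a composition of the three allowed types, completing the proof. No serious obstacle arises; the only care needed is to verify that the three commutation identities really do preserve the three generator classes under the moves, which is immediate from the explicit formulas.
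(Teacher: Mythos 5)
Your argument is correct and is essentially the proof the paper intends: the paper presents this lemma as an ``immediate consequence'' of the three conjugation identities for $\iota$ displayed just above it, together with Crisp's generating theorem, and your proposal simply spells out that argument. The forward direction (computing $\sgn$ on each generator type) and the converse (pushing the even number of $\iota$'s to one end of the word via $\iota\alpha_\#\iota=\alpha_\#$, $\iota\varepsilon_\#\iota=\varepsilon_\#^{-1}$, $\iota g_\#\iota=\bar g_\#^{-1}$, then cancelling) are exactly what the paper is implicitly appealing to. One small point worth being careful about when you write ``partial conjugations are closed under inversion'': the map $\iota\varepsilon_\#\iota$ conjugates by $x_e$ rather than $x_e^{-1}$, so literally it is the inverse of the partial conjugation associated with the reverse edge-twist $\bar\varepsilon:\Delta\to\Gamma$, not $\varepsilon_\#$ itself; since the lemma's ``composition'' of partial conjugations naturally allows inverses (these are morphisms in the groupoid generated by them), this does not affect the validity of the argument, but it is the same mild abuse of notation the paper itself uses with $\varepsilon_\#^{-1}$.
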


In particular, we have the subgroup $\Aut_{\mathscr{A}}(A_\Gamma)\subset \Aut_+(A_\Gamma)$ consisting of compositions of partial conjugations and graph isomorphisms.

\subsubsection{The special automorphism}
Now recall the special automorphism $\Phi\in\Aut_{\mathscr{G}}(\Gamma)$ defined in Example~\ref{example:special automorphism}.
We claim that the induced map $\Phi_\#=\mathscr{F}(\Phi)\in \Aut_{\mathscr{A}}(A_\Gamma)$ is either the identity or the inner automorphism as follows:
\[
\Phi_\# = \begin{cases}
\operatorname{Id} & \text{ if }*_\Gamma\text{ is a chunk};\\
{x_e}_\# & \text{ if }*_\Gamma\text{ is a separating edge $e$}.
\end{cases}
\]

If $*_\Gamma$ is a chunk, then $\Phi\in\Aut_{\mathscr{G}}(\Gamma)$ is the identity and so is $\Phi_\#$.
Otherwise, assume that $*_\Gamma$ is a separating edge $e=\{s,t\}$.
By definition of $\Phi$, either
\[
\Phi=\mathcal{E}_{*_\Gamma}\quad\text{ or }\quad
\Phi=\mathcal{E}_{*_\Gamma}\alpha_{*_\Gamma}
\]
according to the parity of the label $m(e)$.
Namely, if $m(e)$ is even, then
\[
\Phi_\#(v)=\begin{cases}
x_e^{-1}vx_e & v\not\in\{s,t\};\\
s & v=s;\\
t & v=t,
\end{cases}
\]
which is the inner automorphism ${x_e}_\#$ since $s=x_e^{-1}sx_e$ and $t=x_e^{-1}tx_e$.

If $m(e)$ is odd, then since $\Phi$ is a composition with a graph isomorphism $\alpha$ that interchanges $s$ and $t$, we have
\[
\Phi_\#(v)=\begin{cases}
x_e^{-1}vx_e & v\not\in\{s,t\};\\
t & v=s;\\
s & v=t
\end{cases}
\]
and therefore $\Phi_\#={x_e}_\#$ again since $s=x_e^{-1}tx_e$ and $t=x_e^{-1}sx_e$.

For a sake of convenience, the subgroup generated by $\Phi_\#$ will be denoted by $Z_\Gamma$.
Unless $\Phi_\#$ is trivial, it is of infinite order since the Artin group $A_\Gamma$ is centerless.
\[
Z_\Gamma=\langle\Phi_\#\rangle\cong\begin{cases}
1 & *_\Gamma\text{ is a chunk};\\
\mathbb{Z} & *_\Gamma\text{ is a separating edge}.
\end{cases}
\]

\begin{proposition}\label{proposition:inner and edge-twist}
For each $A_\Gamma\in\mathscr{A}$, 
\[
\Inn(A_\Gamma)\cap \Aut_{\mathcal{A}}(A_\Gamma) = Z_\Gamma.
\]
\end{proposition}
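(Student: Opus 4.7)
The inclusion $Z_\Gamma \subseteq \Inn(A_\Gamma)\cap \Aut_{\mathscr{A}}(A_\Gamma)$ is immediate from the preceding computation: since $\Phi\in\Aut_{\mathscr{G}}(\Gamma)$, we have $\Phi_\#=\mathscr{F}(\Phi)\in\Aut_{\mathscr{A}}(A_\Gamma)$, and we just verified that $\Phi_\#$ is either the identity or the inner automorphism ${x_e}_\#$ with $e=*_\Gamma$; either way $\Phi_\#\in\Inn(A_\Gamma)$, so $Z_\Gamma=\langle\Phi_\#\rangle$ lies in the intersection.

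For the reverse inclusion, take $g_\#\in\Inn(A_\Gamma)\cap\Aut_{\mathscr{A}}(A_\Gamma)$. By the equivalence $\mathscr{F}$ of Theorem~\ref{theorem:equivalence of categories} together with the normal form of Theorem~\ref{theorem:normal form in G}, there is a unique $\phi\in\Aut_{\mathscr{G}}(\Gamma)$ with $\mathscr{F}(\phi)=g_\#$, decomposed as $\phi=\mathcal{E}(\eta)\alpha$. The first move in the plan is to kill $\alpha$ via the abelianization $H_1(A_\Gamma)\cong \mathbb{Z}^V$: the inner automorphism $g_\#$ acts trivially on $H_1$, and each partial conjugation acts trivially on $H_1$ as well (since $[x_e^{-1}vx_e]=[v]$), whence $\mathscr{F}(\alpha)$ must act trivially on $H_1$; but $\mathscr{F}(\alpha)$ permutes the basis $V$ according to $\alpha$, forcing $\alpha=\operatorname{Id}$. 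Thus $\phi=\mathcal{E}(\eta)\in\Dehn_{\mathscr{G}}(\Gamma)$ and $g_\#=\mathscr{F}(\mathcal{E}(\eta))$.

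What remains is to show that $\eta$ is supported on the outward edges adjacent to $*_\Gamma$ with a single common value $k$, so that $\mathcal{E}(\eta)=\mathcal{E}_{*_\Gamma}^k$ and consequently $g_\#={x_{*_\Gamma}^k}_\#=\Phi_\#^k\in Z_\Gamma$ (with the conventions that $\mathcal{E}_{*_\Gamma}=\operatorname{Id}$ and $k=0$ when $*_\Gamma$ is a chunk, and $k$ is even when $m(*_\Gamma)$ is odd so that $\mathcal{E}(\eta)$ still lies in $\Dehn_{\mathscr{G}}(\Gamma)$). This will be proved by induction on $\operatorname{Diam}(\Ch_\Gamma)$, closely paralleling the proof of Proposition~\ref{proposition:graph isomorphism and edge-twists in A}. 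At the inductive step, assuming $\eta(\varepsilon)\neq 0$ for some farthest-from-center outward edge $\varepsilon=(e,C)$, one picks an edge $\tilde e=\{v,w\}\subseteq C$ with $e\cap\tilde e=\varnothing$ (possible since $|V(C)|\ge 3$ and $C$ is triangle-free); the explicit formulas for $\mathscr{F}(\mathcal{E}(\eta))$ applied to $v$ and to $w$ give $g u_v \in C_{A_\Gamma}(v)\cap C_{A_\Gamma}(w) = C_{A_\Gamma}(G(\tilde e))$, where $u_v$ is the product of the quasi-centers $x_{e'}^{\eta(\varepsilon')}$ over the outward edges $\varepsilon'=(e',C')$ separating $v$ from $*_\Gamma$ in the chunk tree. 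Exactly as in Proposition~\ref{proposition:graph isomorphism and edge-twists in A}, the triviality of $G(e)\cap G(\tilde e)$ combined with the structure of the normalizer of $G(\tilde e)$ (which is generated by $x_{\tilde e}$ and cannot absorb nontrivial powers of $x_e$) forces $\eta(\varepsilon)=0$, which drives the induction; a concluding normalizer argument on the outward edges adjacent to $*_\Gamma$ then equates their $\eta$-values. The main obstacle is the combinatorial bookkeeping of $u_v$ through nested outward edge-twists and the coordination of the joint centralizer constraints arising from distinct chunks, but the underlying facts about centralizers and normalizers of dihedral Artin subgroups in \CLTTF Artin groups are precisely those already invoked in the proof of Proposition~\ref{proposition:graph isomorphism and edge-twists in A}, so no new input from the theory of \CLTTF Artin groups will be needed.
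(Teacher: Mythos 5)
The first step of your reverse inclusion is incorrect, and the error is fatal. You assert $H_1(A_\Gamma)\cong\mathbb{Z}^V$ and conclude that since $\mathscr{F}(\alpha)$ acts trivially on $H_1$ it must be the identity permutation. But for a \CLTTF graph, whenever $e=\{s,t\}$ has $m(e)$ odd, the relation $(s,t;m(e))=(t,s;m(e))$ abelianizes to $[s]=[t]$; hence $H_1(A_\Gamma)\cong\mathbb{Z}^{V/\sim}$, where $\sim$ is generated by odd-labeled edges, and this is a \emph{proper} quotient of $\mathbb{Z}^V$ as soon as there is a single odd-labeled edge. The only information you get from $H_1$ is that $\alpha$ preserves the $\sim$-classes, which does not force $\alpha=\operatorname{Id}$.

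This is not a repairable detail: it breaks exactly in the case where the proposition has content. When $*_\Gamma$ is an odd-labeled separating edge $e=\{s,t\}$, the generator $\Phi_\#=(x_e)_\#$ of $Z_\Gamma$ has normal form $\mathcal{E}_{*_\Gamma}\alpha_{*_\Gamma}$ with $\alpha_{*_\Gamma}$ the nontrivial transposition swapping $s$ and $t$. Your plan, having forced $\alpha=\operatorname{Id}$, would conclude $\phi=\mathcal{E}(\eta)\in\Dehn_{\mathscr{G}}(\Gamma)$ with $\eta$ \emph{even} on all odd outward edges, so you could only recover $\langle\Phi_\#^2\rangle\subsetneq Z_\Gamma$ --- contradicting the inclusion $Z_\Gamma\subseteq\Inn(A_\Gamma)\cap\Aut_{\mathscr{A}}(A_\Gamma)$ you already established. (You even note parenthetically that $k$ should be even in the odd case, which is the sign that something has gone wrong.)

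The paper's proof never tries to kill $\alpha$ globally. Instead it restricts attention to the center $*_\Gamma$. If $*_\Gamma$ is a chunk $C$, the edge-twists do not move $V(C)$, so $\phi|_{A_C}=(\alpha|_C)_\#$ is induced by a finite-order graph automorphism of $C$; one then concludes $g^N\in C_{A_\Gamma}(A_C)$, which is trivial, and torsion-freeness gives $g=1$. If $*_\Gamma$ is a separating edge $e=\{s,t\}$, one sees $\phi$ preserves $\{s,t\}$, so $g$ normalizes $G(e)$, and by Godelle's theorem $g$ is a power of $x_e$ --- crucially allowing odd powers, hence recovering all of $Z_\Gamma$, with no need to first rule out $\alpha$. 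Your subsequent induction on $\operatorname{Diam}(\Ch_\Gamma)$ is a reasonable sketch in spirit, but it rests on a premise that is false, so it cannot be salvaged as written; you would need to redo it carrying the graph isomorphism $\alpha$ along rather than discarding it, and at the center you will in any case need exactly the centralizer/normalizer inputs the paper invokes.
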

\begin{proof}
By the above discussion, the subgroup $Z_\Gamma\subset \Inn(A_\Gamma)\cap \Aut_{\mathscr{A}}(A_\Gamma)$.

Suppose that $\phi\in \Inn(A_\Gamma)\cap \Aut_{\mathscr{A}}(A_\Gamma)$. Then by Theorems~\ref{theorem:normal form in G} and \ref{theorem:equivalence of categories}, 
\[
\phi=g_\#=\mathcal{E}_\#\alpha_\#
\]
for some $g\in A_\Gamma$, graph isomorphism $\alpha:\Gamma\to\Delta$ and composition of edge-twists $\mathcal{E}:\Delta\to\Gamma$.

Let $*_\Gamma$ be the central vertex of the chunk tree $\Ch_\Gamma$.
If $*_\Gamma$ is a chunk $C$, then $D\coloneqq\alpha(C)\subset\Delta$ is isomorphic to $C$.
Moreover, for any edge-twist $\varepsilon\in E^{\out}(\Ch_\Delta)$, the set $V_2({\varepsilon})$ contains no vertices in $D$.
Therefore for each $v\in V(C)$,
\[
\phi(v)=g^{-1}vg=\mathcal{E}_\#(\alpha(v))=\alpha(v)\in V(D)\subset V,
\]
which implies that the full subgraph defined by $V(D)$ in $\Gamma$ is isomorphic to $C$.
In other words, the graph isomorphism $\alpha$ on $C$ is a graph automorphism and therefore if we restrict $\phi$ to the Artin group $A_C$, then we have
\[
\phi|_{A_C} = (\alpha|_C)_*:A_C\to A_C.
\]

Since the graph automorphism $\alpha|_C$ is of finite order, we have
\[
\operatorname{Id} = (\alpha|_C)_\#^N = (g_\#|_{A_C})^N = (g^N)_\#|_{A_C}
\]
for some $N\ge 1$. This means that $g^N$ is contained in the centralizer $C_{A_\Gamma}(A_C)$ of $A_C$ in $A_\Gamma$, which is trivial.
Since $A_\Gamma$ is torsion-free, $g$ must be trivial.

If $*_\Gamma$ is a separating edge $e=\{s,t\}$, then $\alpha(\{s,t\})=\{s,t\}$ is the central separating edge in $\Ch_\Delta$. 
As above, both vertices $s$ and $t$ are not contained in $V_2(\varepsilon)$ of any edge-twist $\varepsilon\in E^\out(\Ch_\Delta)$.
Therefore, for $v\in \{s,t\}$, we have
\[
\phi(v) =g^{-1}vg= \mathcal{E}_\#(\alpha(v)) = \alpha(v)\in \{s,t\}\subset V,
\]
and either
\[
\begin{cases}
\alpha(s) = s;\\
\alpha(t) = t,
\end{cases}\quad\text{ or }\quad
\begin{cases}
\alpha(s) = t;\\
\alpha(t) = s.
\end{cases}
\]

Since $g$ normalizes $G(e)$, it is a power of $x_e$ as proved in \cite{Godelle03} and we are done.
\end{proof}

\begin{corollary}
The group $Z_\Gamma$ is contained in the center of $\Aut_{\mathscr{A}}(A_\Gamma)$.
\end{corollary}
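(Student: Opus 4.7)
The plan is to split by the definition of $\Phi$. When $*_\Gamma$ is a chunk, $Z_\Gamma$ is trivial and there is nothing to do. When $*_\Gamma = e = \{s,t\}$ is a separating edge, $\Phi_\# = (x_e)_\#$ is a genuine inner automorphism, and because $A_\Gamma$ is centerless, $(x_e)_\# = (x_e')_\#$ forces $x_e = x_e'$. So it suffices to show that every $f \in \Aut_{\mathscr{A}}(A_\Gamma)$ fixes $x_e$; then $f \circ (x_e)_\# \circ f^{-1} = (f(x_e))_\# = (x_e)_\#$ gives centrality, and the same conclusion passes to every power of $\Phi_\#$.

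To prove $f(x_e) = x_e$, I would invoke Theorems~\ref{theorem:normal form in G} and~\ref{theorem:equivalence of categories} to write $f = \mathscr{F}(\mathcal{E}\alpha)$, where $\alpha\colon \Gamma \to \Delta$ is a graph isomorphism with $\alpha \in \Iso(\Gamma)$ and $\mathcal{E}\colon \Delta \to \Gamma$ is a composition of outward edge-twists. The key observation is that $e$ is \emph{literally} the central separating edge of $\Delta$ as a subset of $V$: by Theorem~\ref{theorem:invariance of the chunk tree under edge-twists}, together with the fact that edge-twists preserve the vertex set of each separating edge, the pair $(\Ch_{\llbracket\Gamma\rrbracket}, *_{\llbracket\Gamma\rrbracket})$ is a canonical invariant of the edge-twist class. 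Theorem~\ref{theorem:invariance of the chunk tree under graph isomorphisms} then forces $\alpha(\{s,t\}) = \{s,t\}$, and the braid relation $(s,t;m(e)) = (t,s;m(e))$ swallows the possible swap, so $\alpha_\#(x_e) = (\alpha(s),\alpha(t);m(e)) = x_e$ in $A_\Delta$.

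Next I would check that $\mathcal{E}_\#$ fixes $x_e$ as well. For every outward edge $\varepsilon = (e',C) \in E^\out(\Ch_\Delta)$ appearing in $\mathcal{E}$, both $s$ and $t$ lie in $V_1(\varepsilon)$: the endpoints of $e'$ always sit on both sides of its decomposition, and when $e' \neq e$ the center $e$ lies on the near side by the very definition of ``outward.'' Hence each partial conjugation in $\mathcal{E}_\#$ fixes both $s$ and $t$, and therefore fixes the word $x_e$. Combining the two steps yields $f(x_e) = \mathcal{E}_\#(\alpha_\#(x_e)) = \mathcal{E}_\#(x_e) = x_e$, as required.

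The only genuinely subtle point in this plan is the identification in the first step, namely that the central separating edge of $\Delta$ is \emph{literally} the subset $\{s,t\} \subset V$ rather than merely isomorphic to it; this is where one has to lean on the precise way edge-twists act on vertex subsets. Once that is established, the rest is bookkeeping: outward edge-twists fix the two endpoints of the center, graph isomorphisms in $\Iso(\Gamma)$ permute them, and the braid relation absorbs the permutation.
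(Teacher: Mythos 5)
Your proof is correct and follows essentially the same route as the paper's. The paper reduces to showing $\phi(x_e) = x_e$ for every $\phi = \mathcal{E}_\#\alpha_\#$ by citing the proof of Proposition~\ref{proposition:inner and edge-twist}, where it was established that $\alpha(\{s,t\}) = \{s,t\}$ (invariance of the central separating edge) and that outward edge-twists fix both $s$ and $t$; you re-derive those two facts inline rather than citing them, and you package the conclusion as $f\circ (x_e)_\#\circ f^{-1}=(f(x_e))_\#$ instead of the paper's explicit verification of $(\Phi_\#\phi)(v)=(\phi\Phi_\#)(v)$, but these are the same computation. One small remark: the appeal to centerlessness in your first paragraph (``$(x_e)_\# = (x_e')_\#$ forces $x_e=x_e'$'') is not used anywhere afterward and can be dropped; you only need the forward implication that $f(x_e)=x_e$ gives commutation, never the converse.
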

\begin{proof}
Since there is nothing to prove when $*_\Gamma$ is a chunk, we assume that $*_\Gamma$ is a separating edge $e=\{s,t\}$.

For each $\phi=\mathcal{E}_\#\alpha_\#\in \Aut_{\mathscr{A}}(A_\Gamma)$, it suffices to prove that $\Phi_\#\phi = \phi\Phi_\#$.
As seen in the proof of Proposition~\ref{proposition:inner and edge-twist}, for each $v\in\{s,t\}$, we have $\phi(v)=\alpha(v)\in V$, where $\alpha$ on $e=\{s,t\}$ is a graph automorphism.
Therefore $\phi(x_e)=\alpha_\#(x_e)=x_e$, and so for any $v\in V$,
\begin{align*}
(\Phi_\#\phi)(v) = x_e^{-1}\phi(v)x_e = \phi(x_e^{-1}vx_e) = (\phi\Phi_\#)(v),
\end{align*}
which completes the proof.
\end{proof}

In particular, the group $Z_\Gamma$ is a normal subgroup of $\Aut_{\mathscr{A}}(A_\Gamma)$.
Hence, there is a commutative diagram with exact rows as follows:
\begin{equation}\label{equation:commutative diagram}
\begin{tikzcd}
1\arrow[r] & Z_\Gamma \arrow[r]\arrow[d,hookrightarrow] & \Aut_{\mathscr{A}}(A_\Gamma)\arrow[r]\arrow[d,hookrightarrow] & \Aut_{\mathscr{A}}(A_\Gamma)\big/Z_\Gamma\arrow[r]\arrow[d,"\bar\Psi"] & 1\\
1\arrow[r] & \Inn(A_\Gamma) \arrow[r] & \Aut_+(A_\Gamma)\arrow[r] & \Out_+(A_\Gamma)\arrow[r] & 1
\end{tikzcd}
\end{equation}
The right vertical arrow $\bar\Psi$ is the induced map of the inclusion $\Aut_{\mathscr{A}}(A_\Gamma)\to \Aut_+(A_\Gamma)$.

\begin{theorem}\label{theorem:outer automorphism group}
There is an isomorphism
\[
\bar\Psi:\Aut_{\mathscr{A}}(A_\Gamma)\big/ Z_\Gamma\cong \Out_+(A_\Gamma).
\]
Therefore, by Lemma~\ref{lemma:big commutative diagram},
\[
\Out(A_\Gamma)\cong \left(\Aut_{\mathscr{A}}(A_\Gamma)\big/Z_\Gamma\right)\rtimes \mathbb{Z}_2.
\]
\end{theorem}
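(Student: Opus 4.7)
The plan is to verify that the map $\bar\Psi$ appearing on the right side of the commutative diagram \eqref{equation:commutative diagram} is an isomorphism by a direct diagram chase, since all the substantive work has already been done in Proposition~\ref{proposition:inner and edge-twist} and Lemma~\ref{lemma:positive automorphisms}. Once the isomorphism $\bar\Psi$ is established, the semidirect product decomposition $\Out(A_\Gamma)\cong \left(\Aut_{\mathscr{A}}(A_\Gamma)/Z_\Gamma\right)\rtimes\mathbb{Z}_2$ will follow by combining this isomorphism with the lower split short exact sequence of Lemma~\ref{lemma:big commutative diagram}.

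For injectivity, I would unfold what it means for $[\phi]\in \Aut_{\mathscr{A}}(A_\Gamma)/Z_\Gamma$ to lie in $\ker\bar\Psi$: a representative $\phi\in\Aut_{\mathscr{A}}(A_\Gamma)$ must become trivial in $\Out_+(A_\Gamma)$, that is, $\phi$ must already be an inner automorphism of $A_\Gamma$. Hence $\phi\in \Inn(A_\Gamma)\cap \Aut_{\mathscr{A}}(A_\Gamma)$, and Proposition~\ref{proposition:inner and edge-twist} immediately identifies this intersection with $Z_\Gamma$, so $[\phi]$ is trivial in the quotient.

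For surjectivity, I would take an arbitrary class $[f]\in \Out_+(A_\Gamma)$ with representative $f\in \Aut_+(A_\Gamma)$. By Lemma~\ref{lemma:positive automorphisms}, $f$ decomposes as a product of inner automorphisms, partial conjugations, and graph isomorphisms. Moving every inner-automorphism factor to the left using the conjugation action of $\Aut_{\mathscr{A}}(A_\Gamma)$ on $\Inn(A_\Gamma)$ (which is normal in $\Aut_+(A_\Gamma)$), we may write $f=g_\#\psi$ with $g\in A_\Gamma$ and $\psi\in \Aut_{\mathscr{A}}(A_\Gamma)$. Then $[f]=[\psi]=\bar\Psi([\psi])$, proving surjectivity.

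The final statement is then a direct consequence of Lemma~\ref{lemma:big commutative diagram}, which provides the splitting $\Out(A_\Gamma)\cong \Out_+(A_\Gamma)\rtimes\langle\iota\rangle$, combined with the isomorphism $\bar\Psi$ just established. I do not expect any serious obstacle; the only point requiring care is the rearrangement step in the surjectivity argument, where one must justify that conjugating an inner automorphism by an element of $\Aut_{\mathscr{A}}(A_\Gamma)$ yields another inner automorphism, which is immediate from the formula $\psi g_\# \psi^{-1}=\psi(g)_\#$.
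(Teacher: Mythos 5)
Your proposal is correct and follows essentially the same route as the paper: injectivity via Proposition~\ref{proposition:inner and edge-twist} identifying $\Inn(A_\Gamma)\cap\Aut_{\mathscr{A}}(A_\Gamma)$ with $Z_\Gamma$, surjectivity via Lemma~\ref{lemma:positive automorphisms}, and then the semidirect product from Lemma~\ref{lemma:big commutative diagram}. The only (welcome) difference is that you make explicit the rearrangement step in surjectivity — pushing inner-automorphism factors to one side using $\psi g_\#\psi^{-1}=\psi(g)_\#$ — which the paper's proof passes over silently.
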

\begin{proof}
By Lemma~\ref{lemma:positive automorphisms}, any automorphism in $\Aut_+(A_\Gamma)$ is a composition of inner automorphisms, partial conjugations and graph isomorphisms.
Therefore, for each $[\phi]\in\Out_+(A_\Gamma)$, we have a representative $\phi\in\Aut_+(A_\Gamma)$ which is a composition of partial conjugations and graph isomorphisms.
However, by definition of $\Aut_{\mathscr{A}}(A_\Gamma)$, the map $\phi$ is also contained in $\Aut_{\mathscr{A}}(A_\Gamma)$ and therefore $\bar\Psi$ is surjective.

Suppose that $\ker(\bar\Psi)$ is nontrivial. Then there exists $\phi\in \Inn(A_\Gamma)\cap \Aut_{\mathscr{A}}(A_\Gamma)=Z_\Gamma$, which is trivial in $\Aut_{\mathscr{A}}(A_\Gamma)\big/Z_\Gamma$ as desired.
\end{proof}

Unfortunately, the row exact sequences in \eqref{equation:commutative diagram} do not split in general.
However, when $*_\Gamma$ is a chunk, then the group $Z_\Gamma$ is trivial and therefore 
\[
\Out_+(A_\Gamma)\cong\Aut_{\mathscr{A}}(A_\Gamma)\subset\Aut_+(A_\Gamma).
\]

\begin{corollary}\label{corollary:automorphism}
If $*_\Gamma$ is a chunk, then 
\begin{align*}
\Aut(A_\Gamma)&\cong \Inn(A_\Gamma)\rtimes \Out(A_\Gamma), &
\Out(A_\Gamma)&\cong \Aut_{\mathscr{A}}(A_\Gamma)\rtimes \mathbb{Z}_2.
\end{align*}
\end{corollary}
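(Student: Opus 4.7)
The plan is to read off both isomorphisms from work already assembled in the excerpt, once the hypothesis $*_\Gamma$ is a chunk is used to collapse the group $Z_\Gamma$. Recall from the definition of $Z_\Gamma$ (the paragraph just before Proposition~\ref{proposition:inner and edge-twist}) that
\[
Z_\Gamma = \begin{cases} 1 & *_\Gamma\text{ is a chunk},\\ \mathbb{Z} & *_\Gamma\text{ is a separating edge}. \end{cases}
\]
Hence in our setting $Z_\Gamma$ is trivial, and Theorem~\ref{theorem:outer automorphism group} immediately yields the second stated isomorphism
\[
\Out(A_\Gamma) \cong \bigl(\Aut_{\mathscr{A}}(A_\Gamma)/Z_\Gamma\bigr) \rtimes \mathbb{Z}_2 \cong \Aut_{\mathscr{A}}(A_\Gamma) \rtimes \mathbb{Z}_2.
\]

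For the first isomorphism I will first decompose the positive part of $\Aut(A_\Gamma)$. By Lemma~\ref{lemma:positive automorphisms}, $\Aut_+(A_\Gamma)$ is generated by $\Inn(A_\Gamma)$ together with $\Aut_{\mathscr{A}}(A_\Gamma)$. Since $\Inn(A_\Gamma) \lhd \Aut_+(A_\Gamma)$ and, by Proposition~\ref{proposition:inner and edge-twist} together with $Z_\Gamma=1$,
\[
\Inn(A_\Gamma) \cap \Aut_{\mathscr{A}}(A_\Gamma) = Z_\Gamma = 1,
\]
the generation/intersection criterion gives a semidirect product decomposition $\Aut_+(A_\Gamma) \cong \Inn(A_\Gamma) \rtimes \Aut_{\mathscr{A}}(A_\Gamma)$.

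Next I will splice in the global inversion. Lemma~\ref{lemma:big commutative diagram} gives $\Aut(A_\Gamma)\cong \Aut_+(A_\Gamma)\rtimes \langle \iota\rangle$. The conjugation relations collected just before that lemma ($\alpha_\#\iota=\iota\alpha_\#$, $\varepsilon_\#^{-1}\iota=\iota\varepsilon_\#$, $\bar g_\#^{-1}\iota=\iota g_\#$) show that $\langle \iota\rangle$ normalizes both $\Inn(A_\Gamma)$ and $\Aut_{\mathscr{A}}(A_\Gamma)$ inside $\Aut_+(A_\Gamma)$. Consequently the subgroup $H := \Aut_{\mathscr{A}}(A_\Gamma)\cdot \langle\iota\rangle \cong \Aut_{\mathscr{A}}(A_\Gamma)\rtimes \mathbb{Z}_2$ is well defined (the intersection being trivial since elements of $\Aut_{\mathscr{A}}$ are positive while $\iota$ is not), and $H$ is normalized by $\Inn(A_\Gamma)$ is unnecessary—what I need is that $H$ is a complement to $\Inn(A_\Gamma)$ in $\Aut(A_\Gamma)$.

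To verify this, I will check $\Aut(A_\Gamma) = \Inn(A_\Gamma)\cdot H$ and $\Inn(A_\Gamma)\cap H = 1$. Generation follows because every element of $\Aut(A_\Gamma)$ lies in $\Aut_+(A_\Gamma)$ or in $\iota\cdot \Aut_+(A_\Gamma)$, and each $\Aut_+$-element factors as $\operatorname{Inn}\cdot\Aut_{\mathscr{A}}$ by the previous paragraph. For the intersection: an element of $H$ is either in $\Aut_{\mathscr{A}}(A_\Gamma)$, in which case lying in $\Inn(A_\Gamma)$ forces it into $Z_\Gamma=1$, or it lies in $\Aut_{\mathscr{A}}(A_\Gamma)\cdot\iota$, in which case it is not positive and so cannot be inner (all inner automorphisms are positive). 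Since $\Inn(A_\Gamma)$ is normal, I conclude $\Aut(A_\Gamma)\cong \Inn(A_\Gamma)\rtimes H$; and $H\cong \Out(A_\Gamma)$ by the second isomorphism already established, completing the proof. No step looks genuinely hard—the only thing to be careful about is the associativity/rearrangement of the iterated semidirect product, which is what the explicit verification of the complement $H$ in the last step handles cleanly.
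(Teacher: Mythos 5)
Your proof is correct and follows the same approach the paper intends: use $Z_\Gamma=1$ (from $*_\Gamma$ being a chunk) to collapse the exact sequence in \eqref{equation:commutative diagram} so that $\Aut_+(A_\Gamma)\cong\Inn(A_\Gamma)\rtimes\Aut_{\mathscr{A}}(A_\Gamma)$, then adjoin $\iota$ via Lemma~\ref{lemma:big commutative diagram}. The paper leaves the reassociation of the iterated semidirect product $(\Inn\rtimes\Aut_{\mathscr{A}})\rtimes\mathbb{Z}_2 \cong \Inn\rtimes(\Aut_{\mathscr{A}}\rtimes\mathbb{Z}_2)$ implicit, whereas you spell it out by exhibiting $H=\Aut_{\mathscr{A}}(A_\Gamma)\cdot\langle\iota\rangle$ as a complement to $\Inn(A_\Gamma)$; this is exactly the right care to take, since the relation $\iota^{-1}\varepsilon_\#\iota=\varepsilon_\#^{-1}$ only shows $\iota$ normalizes $\Aut_{\mathscr{A}}(A_\Gamma)$ rather than centralizing it.
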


In particular, if $\Gamma$ is furthermore discretely rigid, then we have the following corollary.
\begin{corollary}
Let $\Gamma$ be a discretely rigid \CLTTF graph such that $*_\Gamma$ is a chunk. Then 
\begin{align*}
\Aut(A_\Gamma)&\cong \Inn(A_\Gamma)\rtimes \left(\left(\mathbb{Z}^{\#(E^\out(\Ch_\Gamma))}\rtimes \Aut(\Gamma)\right)\rtimes \mathbb{Z}_2\right),\\
\Out(A_\Gamma)&\cong \left(\mathbb{Z}^{\#(E^\out(\Ch_\Gamma))}\rtimes \Aut(\Gamma)\right)\rtimes \mathbb{Z}_2.
\end{align*}
\end{corollary}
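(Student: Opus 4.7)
The plan is to assemble the isomorphisms from the three preceding results that have done all the real work, so the proof of this corollary is essentially bookkeeping. First, by Corollary~\ref{corollary:automorphism} the hypothesis that $*_\Gamma$ is a chunk yields
\[
\Aut(A_\Gamma) \cong \Inn(A_\Gamma) \rtimes \Out(A_\Gamma) \quad\text{and}\quad \Out(A_\Gamma) \cong \Aut_{\mathscr{A}}(A_\Gamma) \rtimes \mathbb{Z}_2,
\]
so the remaining task reduces to identifying $\Aut_{\mathscr{A}}(A_\Gamma)$ with $\mathbb{Z}^{\#(E^{\out}(\Ch_\Gamma))} \rtimes \Aut(\Gamma)$ as a semidirect product, not just as an abstract group.

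Second, I would invoke the equivalence of categories $\mathscr{F}:\mathscr{G}\to\mathscr{A}$ proved in Theorem~\ref{theorem:equivalence of categories}. Restricting $\mathscr{F}$ to endomorphism groups gives a group isomorphism $\Aut_{\mathscr{G}}(\Gamma)\cong \Aut_{\mathscr{A}}(A_\Gamma)$. Because the functor carries graph isomorphisms to graph isomorphisms and (inverse) edge-twists to partial conjugations, the distinguished subgroups $\Aut(\Gamma)$ and $\Dehn_{\mathscr{G}}(\Gamma)$ on the source match the subgroups on the target generated by graph-isomorphism conjugations and by partial-conjugation compositions, respectively. Since $\Gamma$ is discretely rigid, Corollary~\ref{corollary:rigid automorphism in G} then gives
\[
\Aut_{\mathscr{G}}(\Gamma) \cong \Dehn_{\mathscr{G}}(\Gamma)\rtimes \Aut(\Gamma) \cong \mathbb{Z}^{\#(E^{\out}(\Ch_\Gamma))} \rtimes \Aut(\Gamma),
\]
and transporting this decomposition through $\mathscr{F}$ and substituting into the previous display delivers both of the stated presentations of $\Aut(A_\Gamma)$ and $\Out(A_\Gamma)$.

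The main point requiring care is verifying that at each step it is the semidirect product structure, and not merely an underlying group isomorphism, that is preserved. For the passage through $\mathscr{F}$ this is immediate because the conjugation action of $\Aut(\Gamma)$ on $\Dehn_{\mathscr{G}}(\Gamma)$ by push-forward (as in the diagrams of Figure~\ref{figure:push-forwards and pull-backs}) is carried precisely to the conjugation action of graph-isomorphism automorphisms on partial conjugations in $\Aut_{\mathscr{A}}(A_\Gamma)$. For the outermost $\rtimes\mathbb{Z}_2$ factor coming from the global inversion $\iota$, one uses the conjugation identities $\alpha_\#\iota = \iota\alpha_\#$ and $\varepsilon_\#^{-1}\iota = \iota\varepsilon_\#$ recorded before Lemma~\ref{lemma:big commutative diagram}, which show that $\iota$ normalizes $\Aut_{\mathscr{A}}(A_\Gamma)$; together with the splitting already obtained in Lemma~\ref{lemma:big commutative diagram}, this assembles the extension without obstruction. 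No further input is needed since $*_\Gamma$ being a chunk makes $Z_\Gamma$ trivial (so Theorem~\ref{theorem:outer automorphism group} reduces $\Out_+(A_\Gamma)$ cleanly to $\Aut_{\mathscr{A}}(A_\Gamma)$), avoiding the non-split situation flagged after that theorem.
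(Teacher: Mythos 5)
Your proposal is correct and follows exactly the same route as the paper, which simply cites Corollary~\ref{corollary:rigid automorphism in G}, Corollary~\ref{corollary:automorphism}, and Theorem~\ref{theorem:equivalence of categories}; your write-up spells out the bookkeeping (transporting the semidirect-product structure through the equivalence $\mathscr{F}$ and noting $Z_\Gamma = 1$) that the paper leaves implicit.
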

\begin{proof}
This is a combination of Corollaries~\ref{corollary:rigid automorphism in G}, \ref{corollary:automorphism} and Theorem~\ref{theorem:equivalence of categories}.
\end{proof}

\subsubsection{Twisted intersection product}
Let us define the \emph{twisted intersection product} $(\alpha,\beta)=\mathcal{E}(\eta_{\alpha,\beta})$ as a composition of edge-twists in $\mathscr{G}$,
where
\[
\eta_{\alpha,\beta}(\varepsilon)=\begin{cases}
\bar\eta_\alpha(\varepsilon)\cdot\bar\eta_\beta(\varepsilon) & \varepsilon\in E^\out_\odd(\Ch_{\llbracket\Gamma\rrbracket});\\
0 & \text{otherwise}.
\end{cases}
\]

Indeed, this product captures the carry-over of the sum of $\bar\eta_\alpha$ and $\alpha_*(\bar\eta_\beta)$ in binary arithmetic that we may lose since each edge-twist is involutive in $\Iso(\Gamma)$.

Note that the twisted intersection product is not necessarily commutative.
Moreover, it satisfies the following properties.
\begin{lemma}\label{lemma:twisted product of triple}
For $\alpha_1, \alpha_2, \alpha_3\in \Iso(\Gamma)$, we have
\[
(\alpha_1, \alpha_2\alpha_3)\cdot(\alpha_1)_*(\alpha_2, \alpha_3) = (\alpha_1\alpha_2, \alpha_3)\cdot(\alpha_1,\alpha_2).
\]
\end{lemma}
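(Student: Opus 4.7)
This identity is a $2$-cocycle-type relation, and my plan is to verify it generator-by-generator in the free abelian group $\Dehn_{\mathscr{G}}(\Gamma)$ of even edge-twists. A preliminary step is to confirm that both sides are indeed even edge-twist automorphisms of $\Gamma$: each twisted intersection product $(\alpha,\beta)$ is, by construction, supported on odd outward edges, and the ``carry'' interpretation shows the exponent at each such edge is even, so $(\alpha,\beta)\in\Dehn_\mathscr{G}(\Gamma)$; push-forwarding and post-composition preserve this property. By Corollary~\ref{corollary:even edge-twists}, $\Dehn_{\mathscr{G}}(\Gamma)\cong\mathbb{Z}^{\#E^\out(\Ch_{\llbracket\Gamma\rrbracket})}$, so the equality reduces to a comparison of the exponent of each $\varepsilon\in E^\out_\odd(\Ch_{\llbracket\Gamma\rrbracket})$ on both sides (the exponents on $E^\out_\even$ vanish trivially by the defining formula for $\eta_{\alpha,\beta}$).

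Fix such an $\varepsilon$ and set $a=\bar\eta_{\alpha_1}(\varepsilon)$, $b=\bar\eta_{\alpha_2}(\alpha_1^{-1}\varepsilon)$, and $c=\bar\eta_{\alpha_3}(\alpha_2^{-1}\alpha_1^{-1}\varepsilon)$; these are the relevant $\{0,1\}$-entries of the normal forms supplied by Proposition~\ref{proposition:normal form for graphs}. The argument of the closure lemma for $\Iso(\Gamma)$ under composition extracts the cocycle
\[
\bar\eta_{\alpha\beta}(\varepsilon)\equiv\bar\eta_\alpha(\varepsilon)+\bar\eta_\beta(\alpha^{-1}\varepsilon)\pmod{2},
\]
so that $\bar\eta_{\alpha_1\alpha_2}(\varepsilon)\equiv a+b$ and $(\alpha_1)_*(\bar\eta_{\alpha_2\alpha_3})(\varepsilon)=\bar\eta_{\alpha_2\alpha_3}(\alpha_1^{-1}\varepsilon)\equiv b+c$ modulo $2$. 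Substituting into the definition of $(-,-)$ and carefully tracking push-forwards, the exponent of $\varepsilon$ on the left-hand side becomes $a(b+c\bmod 2)+bc$, while on the right-hand side it becomes $(a+b\bmod 2)c+ab$. Their equality for $a,b,c\in\{0,1\}$ is an elementary case-check: when $b=c=1$ both sides evaluate to $1$, and otherwise the $\bmod\,2$ reduction is inert and the equality collapses to the ring identity $a(b+c)+bc=ab+bc+ac=(a+b)c+ab$.

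The main obstacle I anticipate is keeping straight the push-forward action of each $\alpha_i$ on $E^\out(\Ch_{\llbracket\Gamma\rrbracket})$ and maintaining consistent identification of edges across edge-twist equivalent graphs. Once one invokes the canonical identification $(\Ch_\Gamma,*_\Gamma)\cong(\Ch_\Delta,*_\Delta)$ for $\Gamma\sim\Delta$ (the corollary following Theorem~\ref{theorem:invariance of the chunk tree under edge-twists}) so that $(\alpha_i)_*$ acts simply as precomposition by $\alpha_i^{-1}$ on the common set $E^\out(\Ch_{\llbracket\Gamma\rrbracket})$, the remaining verification becomes purely arithmetic and the cocycle identity drops out.
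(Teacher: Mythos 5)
Your proof is correct and follows the same route as the paper's: both arguments reduce the identity to a coordinate-wise comparison in the free abelian group $\Dehn_{\mathscr{G}}(\Gamma)$ and then verify, for each $\varepsilon\in E^\out_\odd(\Ch_{\llbracket\Gamma\rrbracket})$, that both sides compute the carry-over in the three-term binary sum $a+b+c$ where $a=\bar\eta_{\alpha_1}(\varepsilon)$, $b=(\alpha_1)_*(\bar\eta_{\alpha_2})(\varepsilon)$, $c=(\alpha_1\alpha_2)_*(\bar\eta_{\alpha_3})(\varepsilon)$. The paper's own proof is extremely terse (essentially one displayed line asserting that both sides "count the carry-over"), whereas you spell out the cocycle relation for $\bar\eta_{\alpha\beta}$ and the explicit expressions $a(b{+}c\bmod 2)+bc$ versus $(a{+}b\bmod 2)c+ab$, which makes the argument genuinely verifiable. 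One small note: your interpretation of the twisted intersection product builds in the push-forward $\alpha_*(\bar\eta_\beta)$ even though the paper's displayed definition of $\eta_{\alpha,\beta}$ literally reads $\bar\eta_\alpha(\varepsilon)\cdot\bar\eta_\beta(\varepsilon)$ without a push-forward; that displayed formula appears to be a typo, since the surrounding text and the relation $\tilde\alpha\tilde\beta=(\alpha,\beta)^2\widetilde{\alpha\beta}$ require the push-forward, and your reading is the consistent one.
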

\begin{proof}
For each $i=1,2,3$, let $\alpha_i:\Gamma\to\Delta_i$ and $\bar{\mathcal{E}}_i=\bar{\mathcal{E}}(\bar\eta_i):\Delta_i\to \Gamma$ be the unique composition of edge-twists.
Then for each $\varepsilon\in E^\out_\odd(\Ch_{\llbracket\Gamma\rrbracket})$, both count the carry-over in the sum
\begin{align*}
&\mathrel{\hphantom{=}}\bar\eta_1(\varepsilon) + (\alpha_1)_*(\bar\eta_2(\varepsilon)) +
(\alpha_1\alpha_2)_*(\bar\eta_3(\varepsilon))\\
&=\bar\eta_1(\varepsilon)\cdot (\alpha_1)_*(\bar\eta_2(\varepsilon)) +
\bar\eta_1(\varepsilon)\cdot (\alpha_1\alpha_2)_*(\bar\eta_3(\varepsilon)) +
(\alpha_1)_*(\bar\eta_2(\varepsilon))\cdot (\alpha_1\alpha_2)_*(\bar\eta_3(\varepsilon))\quad (\mod{2}).\qedhere
\end{align*}
\end{proof}

\begin{lemma}
Let $\alpha_0\in \Aut(\Gamma)$ and $\alpha_1,\alpha_2\in\Iso(\Gamma)$. Then the following holds:
\begin{enumerate}
\item $( \alpha_0, \alpha_1) = ( \alpha_1, \alpha_0) = \operatorname{Id}$,
\item $( \alpha_1, \alpha_2\alpha_0) = ( \alpha_1, \alpha_2)$,
\item $( \alpha_0\alpha_1, \alpha_2) = (\alpha_0)_*(\alpha_1, \alpha_2)$,
\item $( \alpha_1\alpha_0, \alpha_2) = (\alpha_1, \alpha_0\alpha_2)$.
\end{enumerate}
\end{lemma}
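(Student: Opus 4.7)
The plan is to reduce all four identities to three elementary facts about the normal-form function $\bar\eta$ (from Proposition~\ref{proposition:normal form for graphs}) under composition with a graph automorphism $\alpha_0\in\Aut(\Gamma)$. \emph{(A)} Since $\alpha_0(\Gamma)=\Gamma$, the trivial composition of edge-twists realizes $\Gamma\to\alpha_0(\Gamma)$, so uniqueness forces $\bar\eta_{\alpha_0}\equiv 0$. \emph{(B)} For any $\alpha\in\Iso(\Gamma)$ we have $\alpha\alpha_0(\Gamma)=\alpha(\Gamma)$, so both $\bar{\mathcal{E}}(\bar\eta_{\alpha\alpha_0})$ and $\bar{\mathcal{E}}(\bar\eta_\alpha)$ are edge-twist compositions $\Gamma\to\alpha(\Gamma)$ and must coincide by uniqueness, giving $\bar\eta_{\alpha\alpha_0}=\bar\eta_\alpha$. \emph{(C)} Pushing $\bar{\mathcal{E}}(\bar\eta_\alpha):\Gamma\to\alpha(\Gamma)$ forward along $\alpha_0:\Gamma\to\Gamma$ produces an edge-twist composition $\Gamma\to\alpha_0\alpha(\Gamma)$ which, again by uniqueness, equals $\bar{\mathcal{E}}(\bar\eta_{\alpha_0\alpha})$; thus $\bar\eta_{\alpha_0\alpha}=(\alpha_0)_*\bar\eta_\alpha$, where $(\alpha_0)_*$ denotes the action on functions along the chunk-tree permutation of $\alpha_0$ provided by Theorem~\ref{theorem:invariance of the chunk tree under graph isomorphisms}.

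With (A), (B), (C) in hand, I treat each part of the lemma as a direct substitution into the defining formula for $\eta_{\alpha,\beta}$, read (in accordance with the remark on the carry-over of $\bar\eta_\alpha+\alpha_*\bar\eta_\beta$) as
\[
\eta_{\alpha,\beta}(\varepsilon)=\bar\eta_\alpha(\varepsilon)\cdot(\alpha_*\bar\eta_\beta)(\varepsilon).
\]
Part~(1) is immediate: both $\eta_{\alpha_0,\alpha_1}$ and $\eta_{\alpha_1,\alpha_0}$ carry a factor $\bar\eta_{\alpha_0}$ that vanishes by (A), so each product is $\mathcal{E}(0)=\operatorname{Id}$. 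Part~(2) follows from (B): replacing $\bar\eta_{\alpha_2\alpha_0}$ by $\bar\eta_{\alpha_2}$ in $\eta_{\alpha_1,\alpha_2\alpha_0}$ gives $\eta_{\alpha_1,\alpha_2}$ term by term.

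Parts~(3) and~(4) are where the push-forward in (C) enters. For~(3), substituting $\bar\eta_{\alpha_0\alpha_1}=(\alpha_0)_*\bar\eta_{\alpha_1}$ and using $(\alpha_0\alpha_1)_*=(\alpha_0)_*(\alpha_1)_*$ together with the multiplicativity of $(\alpha_0)_*$ on pointwise products yields
\[
\eta_{\alpha_0\alpha_1,\alpha_2}(\varepsilon)=(\alpha_0)_*\bar\eta_{\alpha_1}(\varepsilon)\cdot(\alpha_0)_*(\alpha_1)_*\bar\eta_{\alpha_2}(\varepsilon)=(\alpha_0)_*\eta_{\alpha_1,\alpha_2}(\varepsilon),
\]
whence $\mathcal{E}(\eta_{\alpha_0\alpha_1,\alpha_2})=(\alpha_0)_*\mathcal{E}(\eta_{\alpha_1,\alpha_2})=(\alpha_0)_*(\alpha_1,\alpha_2)$. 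For~(4), I combine (B) on the left factor (yielding $\bar\eta_{\alpha_1\alpha_0}=\bar\eta_{\alpha_1}$) with (C) on the right factor (yielding $(\alpha_0)_*\bar\eta_{\alpha_2}=\bar\eta_{\alpha_0\alpha_2}$); after expanding $(\alpha_1\alpha_0)_*=(\alpha_1)_*(\alpha_0)_*$, both $\eta_{\alpha_1\alpha_0,\alpha_2}$ and $\eta_{\alpha_1,\alpha_0\alpha_2}$ collapse to the common expression $\bar\eta_{\alpha_1}(\varepsilon)\cdot(\alpha_1)_*(\alpha_0)_*\bar\eta_{\alpha_2}(\varepsilon)$.

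The only real subtlety is notational bookkeeping: I must be careful about the implicit action of $\alpha_*$ on functions on $E^\out_\odd(\Ch_{\llbracket\Gamma\rrbracket})$ when comparing the two sides of each identity. Once the convention is fixed, the argument is essentially mechanical, since (A), (B) and (C) all follow immediately from the uniqueness in Proposition~\ref{proposition:normal form for graphs} together with the naturality of push-forward established in Section~\ref{section:CLTTF graphs}.
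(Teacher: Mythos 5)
Your proof is correct, but it takes a genuinely different route from the paper's. The paper first disposes of part (1) by the trivial observation that $\alpha_0:\Gamma\to\Gamma$ forces $\bar\eta_{\alpha_0}\equiv 0$, and then derives parts (2)--(4) as immediate corollaries of part (1) and the three-term identity in Lemma~\ref{lemma:twisted product of triple}: for instance, applying that lemma to $(\alpha_1,\alpha_2,\alpha_0)$ and killing the two factors involving $\alpha_0$ by part (1) gives part (2) at once, and similarly for (3) and (4). You instead work directly from the definition, extracting facts (A), (B), (C) about the normal-form functions $\bar\eta$ from the uniqueness in Proposition~\ref{proposition:normal form for graphs} and the naturality of push-forward, and then verify each identity by mechanical substitution. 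Your approach is more self-contained and makes the mechanism explicit; the paper's is shorter at the cost of leaning on the cocycle identity, which was proved separately by a carry-over computation. One point worth noting in your favour: you read the defining formula for $\eta_{\alpha,\beta}$ as $\bar\eta_\alpha\cdot\alpha_*\bar\eta_\beta$ rather than $\bar\eta_\alpha\cdot\bar\eta_\beta$ as literally displayed in the paper. That is the correct reading --- it is the only one consistent with the surrounding remark, with the proof of Lemma~\ref{lemma:twisted product of triple}, and with the later identity $\mathcal{E}_\alpha\alpha_*(\mathcal{E}_\beta)\mathcal{E}_{\alpha\beta}^{-1}=(\alpha,\beta)^2$; without the push-forward, part (3) of the present lemma would in fact be false whenever $\alpha_0$ acts nontrivially on the chunk tree.
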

\begin{proof}
\noindent (1) This is obvious since $\alpha_0:\Gamma\to\Gamma$.

\noindent (2) -- (4) These are immediate corollary of (1) and Lemma~\ref{lemma:twisted product of triple}.
\end{proof}

\begin{example}
Recall the generators $\alpha_0, \dots, \alpha_4$ for $\Iso(\Gamma)$ as described in Example~\ref{example:generators of Iso}.

Let $\alpha$ and $\beta$ be elements in $\Iso(\Gamma)$. By relation in \eqref{equation:relation of Iso}, there exist two sequences $(i_0,\dots, i_4)$ and $(j_0,\dots, j_4)$ in $\{0,1\}$ such that
\begin{align*}
\alpha &= \alpha_1^{i_1}\alpha_2^{i_2}\alpha_3^{i_3}\alpha_4^{i_4}\alpha_0^{i_0},&
\mathcal{E} &= \varepsilon_1^{i_1}\varepsilon_2^{i_2}\varepsilon_3^{i_3}\varepsilon_4^{i_4}:\Gamma\to\alpha(\Gamma),\\
\alpha' &= \alpha_1^{j_1}\alpha_2^{j_2}\alpha_3^{j_3}\alpha_4^{j_4}\alpha_0^{j_0},&
\mathcal{E}' &= \varepsilon_1^{j_1}\varepsilon_2^{j_2}\varepsilon_3^{j_3}\varepsilon_4^{j_4}:\Gamma\to\alpha'(\Gamma).
\end{align*}
Then since $\alpha_1,\dots,\alpha_4$ preserve $\Ch_{\llbracket\Gamma\rrbracket}$ but $\alpha_0$ interchanges $\varepsilon_2$ and $\varepsilon_3$, we have
\begin{align*}
\alpha_*(\mathcal{E}')&=\begin{cases}
\varepsilon_1^{j_1}\varepsilon_2^{j_2}\varepsilon_3^{j_3}\varepsilon_4^{j_4} & i_0=0;\\
\varepsilon_1^{j_1}\varepsilon_2^{j_3}\varepsilon_3^{j_2}\varepsilon_4^{j_4} & i_0=1,
\end{cases}&
( \alpha, \alpha') &=
\begin{cases}
\varepsilon_1^{i_1 j_1}\varepsilon_2^{i_2 j_2}\varepsilon_3^{i_3 j_3}\varepsilon_4^{i_4 j_4} & i_0=0;\\
\varepsilon_1^{i_1 j_1}\varepsilon_2^{i_2 j_3}\varepsilon_3^{i_3 j_2}\varepsilon_4^{i_4 j_4} & i_0=1.
\end{cases}
\end{align*}
\end{example}

\subsection{Group presentation for \texorpdfstring{$\Aut(A_\Gamma)$}{the automorphism group}}
\label{section:presentation}

Let us define the sets
\begin{align*}
S(\Gamma)&=
\left\{\varepsilon~\middle|~ \varepsilon\in E^\out_{\even}(\Ch_\Gamma)\right\}\cup
\left\{\varepsilon^2~\middle|~ \varepsilon\in E^\out_{\odd}(\Ch_\Gamma)\right\}\cup
\left\{\tilde\alpha~\middle|~ \alpha\in \Iso(\Gamma)\right\}\\
R_0(\Gamma)&=\left\{(s,t;m(e))=(t,s;m(e))\mid e=\{s,t\}\in E\right\}\\
&\mathrel{\hphantom{=}}\cup
\{\varepsilon v = (\varepsilon_\#(v))\varepsilon\mid v\in V, \varepsilon\in S(\Gamma)\}\cup
\{\varepsilon^2 v = (\varepsilon^2_\#(v))\varepsilon\mid v\in V, \varepsilon^2\in S(\Gamma)\},\\
&\mathrel{\hphantom{=}}\cup
\{\tilde\alpha v = (\tilde\alpha_\#(v))\tilde\alpha\mid v\in V, \tilde\alpha\in S(\Gamma)\}\cup
\{\iota v = v^{-1}\iota\mid v\in V\}\\
R_1(\Gamma)&=
\left\{\varepsilon \varepsilon'=\varepsilon' \varepsilon ~\middle|~ \varepsilon, \varepsilon'\in E^\out_{\even}(\Ch_\Gamma)\right\}
\cup
\left\{\varepsilon^2 \varepsilon'^2=\varepsilon'^2 \varepsilon^2 ~\middle|~ \varepsilon, \varepsilon'\in E^\out_{\odd}(\Ch_\Gamma)\right\}
\\
&\mathrel{\hphantom{=}}\cup
\left\{\varepsilon \varepsilon'^2 =\varepsilon'^2\varepsilon ~\middle|~ \varepsilon\in E^\out_{\even}(\Ch_\Gamma), \varepsilon'\in E^\out_{\odd}(\Ch_\Gamma)\right\},\\
R_2(\Gamma)&=
\left\{\tilde\alpha \varepsilon = \alpha_*(\varepsilon) \tilde\alpha ~\middle|~ \alpha\in \Iso(\Gamma), \varepsilon\in E^\out_{\even}(\Ch_\Gamma)\right\}\\
&\mathrel{\hphantom{=}}\cup
\left\{\tilde\alpha \varepsilon^2 = \alpha_*(\varepsilon)^2\tilde\alpha ~\middle|~ \alpha\in \Iso(\Gamma), \varepsilon\in E^\out_{\odd}(\Ch_\Gamma)\right\},\\
R_3(\Gamma)&=
\left\{
\tilde\alpha\tilde\beta=(\alpha,\beta)^2\widetilde{\alpha\beta}~\middle|~
\alpha, \beta\in \Iso(\Gamma)\right\}\\
R_4(\Gamma)&=\left\{\iota^2\right\}\cup
\left\{\tilde\alpha \iota=\mathcal{E}_\alpha^2\iota \tilde\alpha
~\middle|~ \alpha\in \Iso(\Gamma)\right\}\\
&\mathrel{\hphantom{=}}\cup
\left\{\varepsilon\iota\varepsilon\iota \mid \varepsilon\in E^\out_{\even}(\Ch_\Gamma)\right\}
\cup
\left\{\varepsilon^{2}\iota\varepsilon^2\iota \mid\varepsilon\in E^\out_{\odd}(\Ch_\Gamma)\right\},\\
\tilde R_\Phi(\Gamma)&=\begin{cases}
\varnothing & *_\Gamma\text{ is a chunk};\\
\{\mathcal{E}_{*_\Gamma}=x_e\} & *_\Gamma\text{ is an even-labeled separating edge }e;\\
\{\tilde\alpha_{*_\Gamma}=x_e\} & *_\Gamma\text{ is an odd-labeled separating edge }e,
\end{cases}
\end{align*}
and
\begin{align*}
R_\Phi(\Gamma)&=\begin{cases}
\varnothing & *_\Gamma\text{ is a chunk};\\
\{\mathcal{E}_{*_\Gamma}\} & *_\Gamma\text{ is an even-labeled separating edge};\\
\{\tilde\alpha_{*_\Gamma}\} & *_\Gamma\text{ is an odd-labeled separating edge}.
\end{cases}
\end{align*}
Recall \eqref{equation:edge-twist of the center} and \eqref{equation:special automorphism} for the definitions of $\mathcal{E}_{*_\Gamma}$ and $\Phi$, respectively.
If $*_\Gamma$ is an even-labeled separating edge, then $\Phi=\mathcal{E}_{*_\Gamma}\in\Dehn_{\mathscr{G}}(\Gamma)$ is a word in $S(\Gamma)$.
Otherwise, if $*_\Gamma$ is an odd-labeled separating edge, then
$\Phi=\mathcal{E}_{*_\Gamma}\alpha_{*_\Gamma}$ will be mapped to $\alpha_{*_\Gamma}$ via the map $\Aut_{\mathscr{G}}(\Gamma)\to\Iso(\Gamma)$.
Hence it corresponds to $\tilde\alpha_{*_\Gamma}\in S(\Gamma)$.

Here, we have the main theorem of the paper.
\begin{theorem}\label{theorem:main theorem}
Let $\Gamma=(V,E,m)$ be a \CLTTF graph. Then the automorphism group $\Aut(A_\Gamma)$ and outer automorphism group admit the following finite group presentations:
\begin{align*}
\Aut(A_\Gamma)&\cong
\left\langle V, S(\Gamma), \iota ~\middle|~ R_0(\Gamma), R_1(\Gamma), R_2(\Gamma), R_3(\Gamma), R_4(\Gamma), \tilde R_\Phi(\Gamma)
\right\rangle,\\
\Out(A_\Gamma)&\cong\left\langle
S(\Gamma), \iota ~\middle|~
R_1(\Gamma), R_2(\Gamma), R_3(\Gamma), R_4(\Gamma), R_\Phi
\right\rangle.
\end{align*}
\end{theorem}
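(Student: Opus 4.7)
The strategy is to exploit the filtration $\Inn(A_\Gamma)\lhd\Aut_+(A_\Gamma)\lhd\Aut(A_\Gamma)$ from Lemma~\ref{lemma:big commutative diagram} together with the identification $\Out_+(A_\Gamma)\cong\Aut_{\mathscr{A}}(A_\Gamma)/Z_\Gamma$ of Theorem~\ref{theorem:outer automorphism group}, which reduces the problem to lifting the normal form $\mathcal{E}\alpha$ of Theorem~\ref{theorem:normal form in G} from $\Aut_{\mathscr{G}}(\Gamma)$ to the presented group. Checking that each relation holds in $\Aut(A_\Gamma)$ is a routine calculation: $R_0$ records the Artin relations and the action of $S(\Gamma)\cup\{\iota\}$ on $V$; $R_1$ is Lemma~\ref{lemma:commutative and involutive}; $R_2$ is the compatibility identity \eqref{equation:commutativity 1}; $R_3$ records the carry $(\alpha,\beta)^2$ that arises when composing two $\tilde\alpha$-generators because each odd edge twisted by both factors contributes an extra $\varepsilon^2$; $R_4$ records $\iota\varepsilon_\#\iota=\varepsilon_\#^{-1}$, and a direct calculation yields $\tilde\alpha_\#\iota=\mathcal{E}_{\alpha\#}^{2}\,\iota\,\tilde\alpha_\#$; and $\tilde R_\Phi$ is the computation following Example~\ref{example:special automorphism} showing $\Phi_\#=(x_e)_\#$. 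Surjectivity onto $\Aut(A_\Gamma)$ then follows from Lemma~\ref{lemma:positive automorphisms} combined with the factorization $\mathcal{E}\alpha=(\mathcal{E}\mathcal{E}_\alpha^{-1})\tilde\alpha$ in $\Aut_{\mathscr{G}}(\Gamma)$, whose prefix has even exponents at odd edges by Lemma~\ref{lemma:even edge-twists} and hence lies in the subgroup generated by $\varepsilon\in E^\out_\even$ and $\varepsilon^2$ for $\varepsilon\in E^\out_\odd$.

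\textbf{Normal form in the presented group.} The main technical step is to show that every element of the presented group $G$ is equal to a word of the form $g\cdot\mathcal{E}'\cdot\tilde\alpha\cdot\iota^\epsilon$, where $g$ is a word in $V^{\pm1}$, $\mathcal{E}'$ is an ordered product of generators $\varepsilon\in E^\out_\even$ and $\varepsilon^2$ for $\varepsilon\in E^\out_\odd$, $\tilde\alpha$ is a single $\Iso(\Gamma)$-generator, and $\epsilon\in\{0,1\}$. I would first apply $R_4$ to gather the $\iota$'s to the right (using $\iota^2=1$); then apply the conjugation relations in $R_0$ to push the $V$-letters to the left; then apply $R_2$ to shove each $\tilde\alpha$ past every edge-twist on its right; then apply $R_3$ iteratively to collapse successive $\tilde\alpha$'s to a single one, at the cost of $(\cdot,\cdot)^2$ correction terms that are absorbed into $\mathcal{E}'$; and finally apply $R_1$ to reorder $\mathcal{E}'$. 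Consistency of the iterated $R_3$-collapse is guaranteed by the associativity identity of Lemma~\ref{lemma:twisted product of triple}.

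\textbf{Injectivity and the \Out{} presentation.} Suppose a normal form $g\,\mathcal{E}'\,\tilde\alpha\,\iota^\epsilon$ maps to the identity in $\Aut(A_\Gamma)$. The sign on $H_1(A_\Gamma)$ forces $\epsilon=0$, and then $\pi(\mathcal{E}'\tilde\alpha)=(g^{-1})_\#\in\Inn(A_\Gamma)\cap\Aut_{\mathscr{A}}(A_\Gamma)=Z_\Gamma$ by Proposition~\ref{proposition:inner and edge-twist}. If $*_\Gamma$ is a chunk then $Z_\Gamma=1$, so the faithfulness of $\mathscr{F}$ (Corollary~\ref{corollary:faithfulness}) together with the uniqueness in Theorem~\ref{theorem:normal form in G} forces $\mathcal{E}'$ and $\tilde\alpha$ to be trivial in $G$, and the centerlessness of $A_\Gamma$ then yields $g=1$. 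If $*_\Gamma$ is a separating edge $e$, then $\pi(\mathcal{E}'\tilde\alpha)=\Phi_\#^k=(x_e^k)_\#$ for some $k\in\mathbb{Z}$, and $\tilde R_\Phi$ lets us replace the $\Phi^k$-component of $\mathcal{E}'\tilde\alpha$ in $G$ by $x_e^k$, cancelling the prefix $g=x_e^{-k}$. The presentation for $\Out(A_\Gamma)$ is obtained as the quotient by the normal subgroup generated by $V$: this trivializes $R_0$ together with the conjugation halves of $R_2$ and $R_4$, while converting $\tilde R_\Phi$ into $R_\Phi$.

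\textbf{Main obstacle.} The difficult step is the $R_3$-collapse: the twisted intersection product is non-commutative, so tracking the accumulated carries over arbitrary-length products of $\tilde\alpha$-generators relies essentially on the associativity identity of Lemma~\ref{lemma:twisted product of triple}. A secondary point is that $\tilde R_\Phi$ interacts cleanly with the other relations only because $\Phi_\#$ is central in $\Aut_{\mathscr{A}}(A_\Gamma)$ (by the corollary to Proposition~\ref{proposition:inner and edge-twist}), so the normal closure of $\Phi$ in $G$ coincides with the cyclic subgroup it generates.
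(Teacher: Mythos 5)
Your proposal is correct and follows essentially the same route as the paper's proof: both rely on the short exact sequences $1\to\Dehn_{\mathscr{G}}(\Gamma)\to\Aut_{\mathscr{G}}(\Gamma)\to\Iso(\Gamma)\to1$ and $1\to\Inn(A_\Gamma)\to\Aut(A_\Gamma)\to\Out(A_\Gamma)\to1$, the identification $\Out_+(A_\Gamma)\cong\Aut_{\mathscr{A}}(A_\Gamma)/Z_\Gamma$ from Theorem~\ref{theorem:outer automorphism group}, the twisted intersection product for $R_3(\Gamma)$, and $\Phi_\#=(x_e)_\#$ for $\tilde R_\Phi(\Gamma)$. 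The only difference is presentational: the paper invokes Proposition~\ref{proposition:presentation for ses} as an abstract extension lemma and applies it twice, whereas you unpack that lemma into an explicit normal-form/rewriting argument with the injectivity check via $\sgn$, Proposition~\ref{proposition:inner and edge-twist}, and faithfulness of $\mathscr{F}$.
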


In order to prove this theorem, we first consider a short exact sequence
\begin{equation}\label{equation:short exact sequence of even edge-twists}
\begin{tikzcd}
1\arrow[r] & \Dehn_{\mathscr{G}}(\Gamma) \arrow[r] & \Aut_{\mathscr{G}}(\Gamma) \arrow[->>, r] & \Iso(\Gamma) \arrow[r] & 1,
\end{tikzcd}
\end{equation}
where the quotient map
\[
\begin{tikzcd}[row sep=0pc]
\Aut_{\mathscr{G}}(\Gamma) \arrow[->>, r] & \Iso(\Gamma)\\
\mathcal{E}\alpha \arrow[mapsto, r] & \alpha,
\end{tikzcd}
\]
is just a projection, which is well-defined since $\mathcal{E}:\Delta\to \Gamma$ if $\alpha:\Gamma\to\Delta$ and whose kernel is precisely $\Dehn_{\mathscr{G}}(\Gamma)$.

Now we want to provide a group presentation for $\Aut_{\mathscr{G}}(\Gamma)\cong \Aut_{\mathscr{A}}(A_\Gamma)$ by using the following proposition whose proof is elementary and will be omitted.
\begin{proposition}\label{proposition:presentation for ses}
Let $N$ and $Q$ be groups admits presentations
\begin{align*}
N&=\langle S_N\mid R_N\rangle,&
Q&=\langle S_Q\mid R_Q\rangle,
\end{align*}
which fit into the short exact sequence
\[
\begin{tikzcd}
1\arrow[r]& N\arrow[r, "i"]& G\arrow[r,"\pi"]& Q\arrow[r]& 1.
\end{tikzcd}
\]
Let $s:F(S_Q)\to F(S_N)$ be a group homomorphism between free groups $F(S_Q)$ and $F(S_N)$ on $S_Q$ and $S_N$ which makes the following diagram commutative:
\[
\begin{tikzcd}
F(S_Q) \arrow[r,"s"] \arrow[d,->>,"{[\cdot]}"'] & F(S_N) \arrow[d, ->>,"{[\cdot]}"]\\
Q \arrow[from=r, "\pi"] & G,
\end{tikzcd}
\]
Here the vertical maps are the canonical surfections.
Then $G$ admits a group presentation 
\[
G\cong \langle S_N \cup s(S_Q) \mid R_N \cup R_C \cup \tilde{R}_Q\rangle,
\]
where
\begin{align*}
R_C&=\{s(t) g s(t)^{-1} w^{-1} \mid g\in S_N, t\in S_Q, w\in N, [s(t)g s(t)^{-1}]=i(w)\in G\}\\
\tilde{R}_Q&= \{s(r) h^{-1} \mid r\in R_Q, h\in N, [s(r)]=i(h)\in G\}.
\end{align*}
\end{proposition}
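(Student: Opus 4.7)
The plan is to let $\tilde G$ denote the group defined by the presentation on the right-hand side, build the obvious map $\phi\colon \tilde G \to G$ sending each generator to itself (i.e.\ $S_N \ni g \mapsto i([g]_N)$ and $s(S_Q) \ni s(t) \mapsto [s(t)]_G$), and prove that $\phi$ is an isomorphism by fitting it into a five-lemma diagram against the given short exact sequence. The two sides of the five-lemma will both be extensions of $Q$ by $N$.

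First I would check that $\phi$ is well-defined and surjective. Well-definedness amounts to verifying that every relation in $R_N \cup R_C \cup \tilde R_Q$ holds in $G$: the $R_N$ relations hold because $i$ is injective and $R_N$ presents $N$; the $R_C$ relations hold by the very choice of $w$, since $[s(t) g s(t)^{-1}]_G$ lies in the normal subgroup $i(N)$ and is set equal to $i(w)$; the $\tilde R_Q$ relations hold because $\pi([s(r)]_G) = [r]_Q = 1$ so $[s(r)]_G = i(h)$ for some $h \in N$. Surjectivity follows because $s(S_Q)$ hits a lift of a generating set of $Q$ and $S_N$ hits a generating set of $i(N)$, which together generate $G$ by a standard extension argument.

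Next I would construct a homomorphism $\tilde\pi\colon \tilde G \to Q$ defined on generators by $\tilde\pi(g) = 1$ for $g \in S_N$ and $\tilde\pi(s(t)) = [t]_Q$ for $t \in S_Q$; this is well-defined because $R_N$ maps to trivial equalities, $R_C$ maps to $[t]_Q \cdot 1 \cdot [t]_Q^{-1} \cdot 1 = 1$, and $\tilde R_Q$ maps to $[r]_Q = 1$. Composing with $\phi$ recovers $\pi$. Letting $\tilde N \le \tilde G$ be the image of the natural map $N \to \tilde G$ (well-defined since $R_N$ are relations in $\tilde G$), the composition $N \to \tilde G \xrightarrow{\phi} G$ equals $i$, which is injective, so $N \to \tilde N$ is an isomorphism. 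Applying the five lemma to
\[
\begin{tikzcd}
1 \arrow[r] & N \arrow[r]\arrow[d,equal] & \tilde G \arrow[r,"\tilde\pi"]\arrow[d,"\phi"] & Q \arrow[r]\arrow[d,equal] & 1\\
1 \arrow[r] & N \arrow[r,"i"] & G \arrow[r,"\pi"] & Q \arrow[r] & 1
\end{tikzcd}
\]
then yields that $\phi$ is an isomorphism, provided the top row is exact.

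The main obstacle is proving exactness of the top row at $\tilde G$, i.e.\ $\ker \tilde\pi \subseteq \tilde N$. The argument I would run is a normal form / rewriting argument. Using the $R_C$ relations one can push every letter from $s(S_Q)$ past every letter from $S_N$, so every element of $\tilde G$ is representable as $n \cdot q$ with $n \in \tilde N$ and $q$ a word in $s(S_Q)^{\pm 1}$. If such an element lies in $\ker \tilde\pi$, then $q$, viewed as a word in $S_Q$ via $s(t) \leftrightarrow t$, represents the identity in $Q$, hence is a product of conjugates $u r^{\pm 1} u^{-1}$ with $r \in R_Q$. Lifting the conjugation via $s$, each such conjugate equals $s(u) s(r)^{\pm 1} s(u)^{-1}$; the relations $\tilde R_Q$ replace $s(r)^{\pm 1}$ by an element of $\tilde N$, and then iterating $R_C$ moves the remaining $s(u)^{\pm 1}$ letters past this element to cancel, producing an element of $\tilde N$. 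The care needed here is to ensure the rewriting terminates and yields a genuine element of $\tilde N$ rather than a trapped residual word in $s(S_Q)$; this is handled by induction on the length of the word $q$ together with the observation that every $R_C$ substitution strictly reduces the number of $s(S_Q)$-letters to the right of a given $S_N$-letter, while $\tilde R_Q$ substitutions reduce the count of $s(r)$-subwords.
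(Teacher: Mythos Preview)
The paper explicitly omits the proof of this proposition (``whose proof is elementary and will be omitted''), so there is nothing to compare against; your argument is the standard one for presentations of group extensions and is correct in outline.

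One point deserves a sentence of care. When you say ``using the $R_C$ relations one can push every letter from $s(S_Q)$ past every letter from $S_N$'', the relations $R_C$ directly give $s(t)\,g\,s(t)^{-1}\in\tilde N$, but pushing $s(t)^{-1}$ past $g$ (equivalently, showing $s(t)^{-1}\,g\,s(t)\in\tilde N$) is not a single application of an $R_C$ relation. It follows, however, from what you have already established: conjugation by $s(t)$ maps $\tilde N$ into $\tilde N$, and composing with the injective map $\phi|_{\tilde N}$ shows this conjugation agrees with the automorphism of $N$ induced by $[s(t)]\in G$, hence is bijective on $\tilde N$; thus $\tilde N$ is normal in $\tilde G$. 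Once normality is in hand, you can replace the rewriting argument for $\ker\tilde\pi\subseteq\tilde N$ by the cleaner observation that $\tilde G/\tilde N$ is presented by killing $S_N$, whereupon $R_N$ and $R_C$ become trivial and $\tilde R_Q$ becomes $s(R_Q)$, so $\tilde G/\tilde N\cong\langle S_Q\mid R_Q\rangle=Q$ and $\tilde\pi$ induces the isomorphism. This avoids the termination bookkeeping in your last paragraph.
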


We want to use the short exact sequence in \eqref{equation:short exact sequence of even edge-twists} and the proposition above to obtain a group presentation of $\Aut_{\mathscr{G}}(\Gamma)$.

For each $(\alpha:\Gamma\to\Delta)\in\Iso(\Gamma)$, there is a unique composition of edge-twists $\bar{\mathcal{E}}_\alpha=\bar{\mathcal{E}}(\bar\eta_\alpha):\Delta\to\Gamma$ for some $\bar\eta_\alpha:E^\out_\odd(\Ch_{\llbracket\Gamma\rrbracket})\to\{0,1\}$ by Proposition~\ref{proposition:normal form for graphs}.
Let $\eta_\alpha:E^\out(\Ch_{\llbracket\Gamma\rrbracket})\to\mathbb{Z}$ be a function
defined as
\[
\eta_\alpha(\varepsilon)=\begin{cases}
1 & \varepsilon\in E^\out_\odd(\Ch_{\llbracket\Gamma\rrbracket}), \bar\eta_\alpha(\varepsilon)=1;\\
0 & \text{otherwise}.
\end{cases}
\]
Then we have a lift $\tilde\alpha\coloneqq\mathcal{E}_\alpha\alpha:\Gamma\to\Gamma$ of $\alpha$, where $\mathcal{E}_\alpha\coloneqq\mathcal{E}(\eta_\alpha)$.

For each $g\in \Dehn_{\mathscr{G}}(\Gamma)$, the conjugate of $g$ by $\tilde\alpha$ is then 
\begin{align*}
\tilde\alpha g\tilde\alpha^{-1} &=
(\mathcal{E}_\alpha\alpha) g (\alpha^{-1}\mathcal{E}_\alpha^{-1})
=\mathcal{E}_\alpha\alpha_*(g)\mathcal{E}_\alpha^{-1}
=\alpha_*(g).
\end{align*}

For $\alpha, \beta\in \Iso(\Gamma)$, we have the following:
\[
\tilde\alpha\tilde\beta = \mathcal{E}_\alpha \alpha \mathcal{E}_\beta\beta
=\mathcal{E}_\alpha \alpha_*(\mathcal{E}_\beta) \alpha\beta\quad
\text{ and }\quad
\widetilde{\alpha\beta}=\mathcal{E}_{\alpha\beta}(\alpha\beta),
\]
which coincide in $\Iso(\Gamma)$ and so 
\[
\mathcal{E}_\alpha \alpha_*(\mathcal{E}_\beta) \mathcal{E}_{\alpha\beta}^{-1}
\]
is a composition of even edge-twists. Indeed, this is exactly the same as $(\alpha,\beta)^2$ by the meaning of the twisted intersection product as mentioned earlier.
Therefore we have 
\[
\mathcal{E}_\alpha \alpha_*(\mathcal{E}_\beta)\mathcal{E}_{\alpha\beta}^{-1}
=(\alpha,\beta)^2\in\Dehn_{\mathscr{G}}(\Gamma),\quad\text{ or equivalently,}
\quad
\tilde\alpha\tilde\beta = (\alpha,\beta)^2\widetilde{\alpha\beta}.
\]

\begin{proposition}
The groups $\Aut_{\mathscr{A}}(A_\Gamma)$ and $\Aut_{\mathscr{A}}(A_\Gamma)/Z_\Gamma$ admit the finite group presentations
\begin{align*}
\Aut_{\mathscr{A}}(A_\Gamma)&\cong
\langle S(\Gamma) \mid R_1(\Gamma), R_2(\Gamma), R_3(\Gamma)\rangle,\\
\Aut_{\mathscr{A}}(A_\Gamma)/Z_\Gamma&\cong
\langle S(\Gamma) \mid R_1(\Gamma), R_2(\Gamma), R_3(\Gamma), R_\Phi\rangle.
\end{align*}
\end{proposition}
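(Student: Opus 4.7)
The plan is to apply Proposition~\ref{proposition:presentation for ses} to the short exact sequence~\eqref{equation:short exact sequence of even edge-twists}, using Theorem~\ref{theorem:equivalence of categories} to identify $\Aut_{\mathscr{A}}(A_\Gamma)$ with $\Aut_{\mathscr{G}}(\Gamma)$ throughout. By Corollary~\ref{corollary:even edge-twists}, the kernel $N=\Dehn_{\mathscr{G}}(\Gamma)$ is free abelian on
\[
S_N=\{\varepsilon\mid\varepsilon\in E^\out_\even(\Ch_\Gamma)\}\cup\{\varepsilon^2\mid\varepsilon\in E^\out_\odd(\Ch_\Gamma)\},
\]
with $R_N=R_1(\Gamma)$ (pairwise commutation). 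The quotient $Q=\Iso(\Gamma)$ is finite because $\Iso(\Gamma)\subset\mathfrak{S}_V$, so it admits the tautological Cayley-table presentation $\langle S_Q\mid R_Q\rangle$ with $S_Q=\{\tilde\alpha\mid\alpha\in\Iso(\Gamma)\}$ and $R_Q=\{\tilde\alpha\tilde\beta=\widetilde{\alpha\beta}\mid\alpha,\beta\in\Iso(\Gamma)\}$.

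The required set-theoretic lift $s\colon S_Q\to\Aut_{\mathscr{G}}(\Gamma)$ is the one fixed in the paragraph preceding the proposition, namely $\alpha\mapsto\tilde\alpha=\mathcal{E}_\alpha\alpha$, with $\mathcal{E}_\alpha=\mathcal{E}(\eta_\alpha)$ the canonical lift produced by Proposition~\ref{proposition:normal form for graphs}. The conjugation relations $R_C$ of Proposition~\ref{proposition:presentation for ses} are then computed via
\[
\tilde\alpha\, g=\mathcal{E}_\alpha\,\alpha\, g=\mathcal{E}_\alpha\,\alpha_*(g)\,\alpha=\alpha_*(g)\,\mathcal{E}_\alpha\,\alpha=\alpha_*(g)\,\tilde\alpha\qquad(g\in S_N),
\]
where the second equality is relation~\eqref{equation:commutativity 1} and the third is relation~\eqref{equation:commutativity 2} commuting the two compositions of edge-twists $\mathcal{E}_\alpha$ and $\alpha_*(g)$ on $\alpha(\Gamma)$. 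This is precisely $R_2(\Gamma)$.

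For the lifted relations $\tilde R_Q$, each generator $\tilde\alpha\tilde\beta=\widetilde{\alpha\beta}\in R_Q$ contributes the Dehn-element
\[
\tilde\alpha\tilde\beta\,\widetilde{\alpha\beta}^{\,-1}=\mathcal{E}_\alpha\,\alpha_*(\mathcal{E}_\beta)\,\mathcal{E}_{\alpha\beta}^{-1}\in\Dehn_{\mathscr{G}}(\Gamma),
\]
which by the intended interpretation of the twisted intersection product as the carry-over in the $\mathbb{Z}_2$-sum $\bar\eta_\alpha+\alpha_*(\bar\eta_\beta)\equiv\bar\eta_{\alpha\beta}\pmod{2}$ equals $(\alpha,\beta)^2$; this is $R_3(\Gamma)$, so Proposition~\ref{proposition:presentation for ses} delivers the first presentation. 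For the quotient $\Aut_{\mathscr{A}}(A_\Gamma)/Z_\Gamma$, Proposition~\ref{proposition:inner and edge-twist} and the discussion preceding it show that $Z_\Gamma$ is trivial when $*_\Gamma$ is a chunk and otherwise infinite cyclic on $\Phi_\#$; under the identification $\Aut_{\mathscr{A}}(A_\Gamma)\cong\Aut_{\mathscr{G}}(\Gamma)$ this generator is either $\mathcal{E}_{*_\Gamma}$ or $\tilde\alpha_{*_\Gamma}$, exactly the single element in $R_\Phi(\Gamma)$, and imposing it in the first presentation yields the second. The main point that must be verified carefully is the carry-over identity $\mathcal{E}_\alpha\,\alpha_*(\mathcal{E}_\beta)\,\mathcal{E}_{\alpha\beta}^{-1}=(\alpha,\beta)^2$; everything else is formal bookkeeping inside the free abelian kernel together with~\eqref{equation:commutativity 1} and~\eqref{equation:commutativity 2}.
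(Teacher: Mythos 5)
Your proof is correct and follows essentially the same route as the paper: apply Proposition~\ref{proposition:presentation for ses} to the short exact sequence~\eqref{equation:short exact sequence of even edge-twists}, matching $R_1,R_2,R_3$ with $R_N,R_C,\tilde R_Q$ and $R_\Phi$ with the generator of $Z_\Gamma$. You spell out the conjugation computation $\tilde\alpha g\tilde\alpha^{-1}=\alpha_*(g)$ and the carry-over identity $\mathcal{E}_\alpha\,\alpha_*(\mathcal{E}_\beta)\,\mathcal{E}_{\alpha\beta}^{-1}=(\alpha,\beta)^2$, which the paper has already established in the discussion immediately preceding the proposition and therefore only cites implicitly in its brief proof.
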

\begin{proof}
We use Proposition~\ref{proposition:presentation for ses} on \eqref{equation:short exact sequence of even edge-twists}.
Since $\Dehn_{\mathscr{G}}(\Gamma)$ is a free abelian group generated by even edge-twists, the group $\Aut_{\mathscr{A}}(A_\Gamma)$ is generated by the set $S(\Gamma)$.
Moreover the sets $R_1, R_2$ and $R_3$ correspond to $R_N, R_C$ and $\tilde R_Q$ in Proposition~\ref{proposition:presentation for ses} and the generator for $Z_\Gamma$ corresponds to the element in $R_\Phi$ if $R_\Phi\neq\varnothing$. Therefore we are done.
\end{proof}

\begin{proof}[Proof of Theorem~\ref{theorem:main theorem}]
We first find the group presentation for $\Out(A_\Gamma)$, which is isomorphic to $(\Aut_{\mathscr{A}}(A_\Gamma)/Z_\Gamma)\rtimes \mathbb{Z}_2$ by Theorem~\ref{theorem:outer automorphism group}.
Therefore we need to justify relations in $R_4(\Gamma)$.

Since $\iota$ is an involution, $\iota^2$ is the identity.
Let $\varepsilon=(e=\{s,t\}, C)$ be an edge in $E^\out(\Ch_\Gamma)$.
For each $v\in V$, 
\begin{align*}
\iota\varepsilon_\#(v) &=
\begin{cases}
\iota(v) & v\in V_1(\varepsilon);\\
\iota(x_e^{-1}vx_e) & v\not\in V_1(\varepsilon).
\end{cases}\\
&=\begin{cases}
v & v\in V_1(\varepsilon);\\
\overline{x_e}^{-1}v^{-1}\overline{x_e} & v\not\in V_1(\varepsilon),
\end{cases}
\end{align*}
where $\overline{x_e}$ is the reverse of $x_e$ and identical to $x_e$ in $A_\Gamma$.
Therefore, 
\[
\iota\varepsilon_\#(v) = (\varepsilon_\#)^{-1}(v^{-1}) = \varepsilon_\#^{-1}\iota(v)
\]
and so the second or third type of relations follows.
We also note that for each $\varepsilon_1, \varepsilon_2\in E^\out(\Ch_\Gamma)$,
\begin{equation}\label{equation:involution and partial conjugation}
\iota(\varepsilon_1)_\#(\varepsilon_2)_\#=
(\varepsilon_1)^{-1}_\#(\varepsilon_2)^{-1}_\#\iota.
\end{equation}

In order to check if the relation $\tilde\alpha \iota=\mathcal{E}_\alpha^2\iota \tilde\alpha$ holds, let us regard $\tilde\alpha$ as the composition $\tilde\alpha = \mathcal{E}_\alpha \alpha$,
where $\alpha:\Gamma\to\Delta$ is a graph isomorphism for some $\Delta$ and $\bar{\mathcal{E}}_\alpha:\Delta\to\Gamma$ is a composition of edge-twists.
Then by \eqref{equation:involution and partial conjugation}
\begin{align*}
(\iota\tilde\alpha)(v) &= \iota(\mathcal{E}_\alpha)_\#\alpha(v)
=(\mathcal{E}_\alpha)_\#^{-1}(\iota\alpha)(v)
=(\mathcal{E}_\alpha)_\#^{-1}(\alpha\iota)(v)
\end{align*}
for each $v\in V$.
Hence $\iota\tilde\alpha=(\mathcal{E}_\alpha)_\#^{-2}\tilde\alpha \iota$, or equivalently, $(\mathcal{E}_\alpha)^2_\#\iota\tilde\alpha = \tilde\alpha\iota$.

Finally, we use Proposition~\ref{proposition:presentation for ses} again to the short exact sequence between automorphism groups.
\[
\begin{tikzcd}
1\arrow[r] & \Inn(A_\Gamma) \arrow[r] & \Aut(A_\Gamma) \arrow[r] & \Out(A_\Gamma) \arrow[r] & 1
\end{tikzcd}
\]

Since $\Inn(A_\Gamma)\cong A_\Gamma$, we may use the group presentation for $A_\Gamma$.
Therefore, the group $\Aut(A_\Gamma)$ is generated by two sets $V$ and $S(\Gamma)$ defined earlier.
The relations are consisting of three types of relations such that (i) the original relations in $A_\Gamma$, (ii) the action of $\Out(A_\Gamma)$ on $A_\Gamma\cong\Inn(A_\Gamma)$, and (iii) lifts of relations in $\Out(A_\Gamma)$.

The generating set is obvious.
The set $R_0$ consists of relations in $A_\Gamma$ and relations corresponding to the action of $\Out(A_\Gamma)$ on $A_\Gamma$, where the latter relations are obvious by definition.
Moreover, all relations but $R_\Phi(\Gamma)$ in $\Out(A_\Gamma)$ hold in $\Aut(\Gamma)$ as well and the can be lifted without any modification. However, the relation $R_\Phi(\Gamma)$ may not hold in $\Aut(\Gamma)$ when $*_\Gamma$ is a separating edge.
In this case, we should identify $\mathcal{E}_{*_\Gamma}$ or $\tilde\alpha_{*_\Gamma}$ with the inner automorphism $(x_e)_\#$, which is just $x_e$ in our presentation. This completes the proof.
\end{proof}

\subsection{Examples}
We will compute (outer) automorphism groups for various \CLTTF graphs.

\subsubsection{Discretely rigid with the central chunk}
As seen earlier, the following \CLTTF graph $\Gamma$ is not rigid but discretely rigid.
\begin{align*}
\Gamma&=\begin{tikzpicture}[baseline=-.5ex]
\foreach \i in {-1.5,-0.5,0.5,1.5} {
\draw[fill] (\i,-0.5) circle (2pt) (\i, 0.5) circle (2pt);
}
\draw (-1.5,-0.5) node[below left] {$a$} -- node[midway, left] {$\scriptstyle 4$} (-1.5, 0.5) node[above left] {$b$};
\draw (-0.5,-0.5) node[below] {$h$} -- node[midway, right] {$\scriptstyle 3$} (-0.5, 0.5) node[above] {$c$};
\draw (0.5,-0.5) node[below] {$g$} -- node[midway, right] {$\scriptstyle 3$} (0.5, 0.5) node[above] {$d$};
\draw (1.5,-0.5) node[below right] {$f$} -- node[midway, right] {$\scriptstyle 4$} (1.5, 0.5) node[above right] {$e$};
\draw (-1.5,-0.5) -- node[midway, below] {$\scriptstyle 4$} (-0.5, -0.5);
\draw (-1.5,0.5) -- node[midway, above] {$\scriptstyle 6$} (-0.5, 0.5);
\draw (-0.5,-0.5) -- node[midway, below] {$\scriptstyle 4$} (0.5, -0.5);
\draw (-0.5,0.5) -- node[midway, above] {$\scriptstyle 6$} (0.5, 0.5);
\draw (0.5,-0.5) -- node[midway, below] {$\scriptstyle 4$} (1.5, -0.5);
\draw (0.5,0.5) -- node[midway, above] {$\scriptstyle 6$} (1.5, 0.5);
\end{tikzpicture}
\end{align*}
\[
\Ch_\Gamma=
\begin{tikzcd}[ampersand replacement=\&, column sep=1pc, row sep=0pc]
\begin{tikzpicture}[baseline=-.5ex]
\draw[fill] 
(-0.5, -0.5) circle (2pt) node[below left] {$a$} 
-- node[midway, left] {$\scriptstyle 4$} (-0.5, 0.5) circle (2pt)  node[above left] {$b$}
-- node[midway, above] {$\scriptstyle 6$} (0.5, 0.5) circle (2pt) node[above right] {$c$}
-- node[midway, right] {$\scriptstyle 3$} (0.5, -0.5) circle (2pt)  node[below right] {$h$}
-- node[midway, below] {$\scriptstyle 4$} (-0.5, -0.5);
\end{tikzpicture}
\arrow[from=r] \&
\begin{tikzpicture}[baseline=-.5ex]
\draw[fill]
(0, -0.5) circle (2pt) node[below] {$h$} -- node[midway, left] {$\scriptstyle3$} (0, 0.5) circle (2pt) node[above] {$c$};
\end{tikzpicture}
\arrow[from=r] \&
\begin{tikzpicture}[baseline=-.5ex]
\draw[fill] 
(-0.5, -0.5) circle (2pt) node[below left] {$h$} 
-- node[midway, left] {$\scriptstyle 3$} (-0.5, 0.5) circle (2pt)  node[above left] {$c$}
-- node[midway, above] {$\scriptstyle 6$} (0.5, 0.5) circle (2pt) node[above right] {$d$}
-- node[midway, right] {$\scriptstyle 3$} (0.5, -0.5) circle (2pt)  node[below right] {$g$}
-- node[midway, below] {$\scriptstyle 4$} (-0.5, -0.5);
\end{tikzpicture}
\arrow[r] \&
\begin{tikzpicture}[baseline=-.5ex]
\draw[fill]
(0, -0.5) circle (2pt) node[below] {$g$} -- node[midway, right] {$\scriptstyle3$} (0, 0.5) circle (2pt) node[above] {$d$};
\end{tikzpicture}
\arrow[r] \&
\begin{tikzpicture}[baseline=-.5ex]
\draw[fill] 
(-0.5, -0.5) circle (2pt) node[below left] {$g$} 
-- node[midway, left] {$\scriptstyle 3$} (-0.5, 0.5) circle (2pt)  node[above left] {$d$}
-- node[midway, above] {$\scriptstyle 6$} (0.5, 0.5) circle (2pt) node[above right] {$e$}
-- node[midway, right] {$\scriptstyle 4$} (0.5, -0.5) circle (2pt)  node[below right] {$f$}
-- node[midway, below] {$\scriptstyle 4$} (-0.5, -0.5);
\end{tikzpicture}\\
C_1 \arrow[from=r] \& e_1 \arrow[from=r] \& C =*_\Gamma \arrow[r] \& e_2 \arrow[r] \& C_2
\end{tikzcd}
\]
Then in the chunk tree $\Ch_\Gamma$, the central vertex $*_\Gamma$ is a chunk $C$ with $V(C)=\{c,d,g,h\}$ and there are two outward edges
\[
\varepsilon_1 = (e_1, C_1)\quad\text{ and }\quad
\varepsilon_2 = (e_2, C_2),
\]
where $C_1$ and $C_2$ are induced subgraphs of $\Gamma$ with $V(C_1) = \{a,b,c,h\}$ and $V(C_2)=\{d,e,f,g\}$.
Moreover
$\Aut(\Gamma)=\langle \alpha \mid \alpha^2\rangle$,
where $\alpha:\Gamma\to\Gamma$ is the obvious horizontal reflection that interchanges edge-twists $\varepsilon_1$ and $\varepsilon_2$.
Therefore by Corollary~\ref{corollary:automorphism},
\begin{align*}
\Aut(A_\Gamma)&\cong \Inn(A_\Gamma)\rtimes \Aut_{\mathscr{A}}(A_\Gamma)
\cong A_\Gamma\rtimes \left(\left(\langle \varepsilon_1, \varepsilon_2 \rangle\rtimes \langle \alpha\mid \alpha^2 \rangle\right)\rtimes \langle \iota\mid \iota^2 \rangle \right).
\end{align*}

The followings are precise relations:
\begin{itemize}
\item Inner automorphism group relation and the action of $\Out(A_\Gamma)$ on $\Inn(A_\Gamma)$
\begin{align*}
R_0&=\left\{(s,t;m(e))=(t,s;m(e))\mid e=\{s,t\}\in E\right\}\\
&\mathrel{\hphantom{=}}\cup
\{\alpha v = (\alpha(v))\alpha\mid v\in V\}\cup
\{\varepsilon_i v = (\varepsilon_i(v))\varepsilon_i\mid v\in V, i=1,2\}\cup
\{\iota v = v^{-1}\iota\mid v\in V\}
\end{align*}

\item Commutative relations between $\varepsilon_1$ and $\varepsilon_2$
\[
R_1=\{\varepsilon_1\varepsilon_2=\varepsilon_2\varepsilon_1\}
\]

\item The action of $\alpha$ on $\langle\varepsilon_1, \varepsilon_2\rangle$ and involutivity 
\begin{align*}
R_2&=\{\alpha \varepsilon_1= \varepsilon_2 \alpha\},&
R_3&=\{\alpha^2\}
\end{align*}

\item The action of $\iota$ on $\Aut_{\mathscr{A}}(\Gamma)$
\begin{align*}
R_4=\{\iota^2, \iota \alpha = \alpha \iota\}\cup
\{\iota\varepsilon_i = \varepsilon_i^{-1}\iota\mid i=1,2\}
\end{align*}
\end{itemize}

Therefore we have the following group presentations:
\begin{align*}
\Aut(A_\Gamma)&=\left\langle
V, \varepsilon_1, \varepsilon_2, \alpha, \iota \mid R_0, R_1, R_2, R_3, R_4
\right\rangle,\\
\Out(A_\Gamma)&=\left\langle
\varepsilon_1, \varepsilon_2, \alpha, \iota \mid R_1, R_2, R_3, R_4
\right\rangle.
\end{align*}

\subsubsection{Discretely non-rigid with the central separating edge}
The \CLTTF graph $\Gamma$ below is rigid but not discretely rigid with the central separating edge $e=\{c,f\}$
\begin{align*}
\Gamma&=\begin{tikzpicture}[baseline=-.5ex]
\foreach \i in {-1,0,1} {
\draw[fill] (\i,-0.5) circle (2pt) (\i, 0.5) circle (2pt);
}
\draw (-1,-0.5) node[below left] {$a$} -- node[midway, left] {$\scriptstyle 4$} (-1, 0.5) node[above left] {$b$};
\draw (0,-0.5) node[below] {$f$} -- node[midway, right] {$\scriptstyle 3$} (0, 0.5) node[above] {$c$};
\draw (1,-0.5) node[below right] {$e$} -- node[midway, right] {$\scriptstyle 4$} (1, 0.5) node[above right] {$d$};
\draw (-1,-0.5) -- node[midway, below] {$\scriptstyle 4$} (0, -0.5);
\draw (-1,0.5) -- node[midway, above] {$\scriptstyle 6$} (0, 0.5);
\draw (0,-0.5) -- node[midway, below] {$\scriptstyle 6$} (1, -0.5);
\draw (0,0.5) -- node[midway, above] {$\scriptstyle 6$} (1, 0.5);
\end{tikzpicture}&
\Ch_\Gamma&=
\begin{tikzcd}[ampersand replacement=\&, column sep=1pc, row sep=0pc]
\begin{tikzpicture}[baseline=-.5ex]
\draw[fill] 
(-0.5, -0.5) circle (2pt) node[below left] {$a$} 
-- node[midway, left] {$\scriptstyle 4$} (-0.5, 0.5) circle (2pt)  node[above left] {$b$}
-- node[midway, above] {$\scriptstyle 6$} (0.5, 0.5) circle (2pt) node[above right] {$c$}
-- node[midway, right] {$\scriptstyle 3$} (0.5, -0.5) circle (2pt)  node[below right] {$f$}
-- node[midway, below] {$\scriptstyle 4$} (-0.5, -0.5);
\end{tikzpicture}
\arrow[from=r] \&
\begin{tikzpicture}[baseline=-.5ex]
\draw[fill]
(0, -0.5) circle (2pt) node[below] {$f$} -- node[midway, right] {$\scriptstyle 3$}
(0, 0.5) circle (2pt) node[above] {$c$};
\end{tikzpicture}
\arrow[r] \&
\begin{tikzpicture}[baseline=-.5ex]
\draw[fill] 
(-0.5, -0.5) circle (2pt) node[below left] {$f$} 
-- node[midway, left] {$\scriptstyle 3$} (-0.5, 0.5) circle (2pt)  node[above left] {$c$}
-- node[midway, above] {$\scriptstyle 6$} (0.5, 0.5) circle (2pt) node[above right] {$d$}
-- node[midway, right] {$\scriptstyle 3$} (0.5, -0.5) circle (2pt)  node[below right] {$e$}
-- node[midway, below] {$\scriptstyle 4$} (-0.5, -0.5);
\end{tikzpicture}\\
C_1 \arrow[from=r] \& e=*_\Gamma \arrow[r] \& C_2
\end{tikzcd}
\end{align*}
There are two outward edges in $E^{\out}(\Ch_\Gamma)$
\[
\varepsilon_1=(e_1, C_1)\quad\text{ and }\quad
\varepsilon_2=(e_2, C_2)
\]
and so we have four \CLTTF graphs edge-twist equivalent to $\Gamma$
\[
\Gamma_{0}\coloneqq\Gamma,\qquad
\Gamma_{1}\coloneqq\bar\varepsilon_1(\Gamma),\qquad
\Gamma_{2}\coloneqq\bar\varepsilon_2(\Gamma),\quad\text{ and }\quad
\Gamma_{3}\coloneqq\bar\varepsilon_1\bar\varepsilon_2(\Gamma),
\]
which are all isomorphic to $\Gamma$.
Moreover, since the group of graph automorphisms is trivial, each edge-twist equivalent graph $\Gamma_{i}$ has the unique graph isomorphism $\alpha_{i}$, where
\begin{align*}
\alpha_{0}&=\operatorname{Id}_V,&
\alpha_{1}(v)&=
\begin{cases}
f & v=c;\\
e & v=d;\\
d & v=e;\\
c & v=f;\\
v & v\in \{a,b\},
\end{cases}&
\alpha_{2}(v)&=
\begin{cases}
e & v=d;\\
d & v=e;\\
v & v\neq d,e,
\end{cases}&
\alpha_{3}&=\alpha_{1}\alpha_{2}.
\end{align*}
Therefore 
\[
\Iso(\Gamma)=\left\langle
\alpha_{1}, \alpha_{2} \mid \alpha_{1}\alpha_{2}=\alpha_{2}\alpha_{1},
\alpha_{1}^2, \alpha_{2}^2
\right\rangle
\cong \mathbb{Z}_2\times \mathbb{Z}_2.
\]

Since both $\alpha_1$ and $\alpha_2$ act trivially on $\Ch_\Gamma$, we have
\[
(\alpha_i)_*(\varepsilon_j) = \varepsilon_j,\quad\text{ for all }\quad
1\le i,j\le 2
\]
and so the twisted intersection is then defined as follows: for $i=1,2$,
\begin{align*}
(\alpha_{i}, \alpha_{i}) &= (\alpha_i, \alpha_3) = (\alpha_3, \alpha_i) = \varepsilon_i,&
(\alpha_1, \alpha_2) &= (\alpha_2, \alpha_1) = \operatorname{Id},&
(\alpha_3, \alpha_3) &=\varepsilon_1\varepsilon_2.
\end{align*}

Now we compute the group $\Aut_{\mathscr{A}}(A_\Gamma)$, which is generated by
\[
\varepsilon_1^2, \varepsilon_2^2, \tilde\alpha_1, \tilde\alpha_2, \tilde\alpha_3.
\]
The sets of relations are as follows:
\begin{align*}
R_1 &= \{\varepsilon_1^2\varepsilon_2^2 = \varepsilon_2^2\varepsilon_1^2\},\\
R_2 &= \{\tilde\alpha_i \varepsilon_j^2 =\varepsilon_j^2 \tilde\alpha_i\mid 1\le i\le 3, 1\le j\le 2\},\\
R_3 &= \{
\tilde\alpha_i^2 = \varepsilon_i^2 \mid 1\le i\le 2
\}\cup\{\tilde\alpha_3^2 = \varepsilon_1^2\varepsilon_2^2\}\\
&\mathrel{\hphantom{=}}\cup
\{\tilde\alpha_1\tilde\alpha_2 = \tilde\alpha_2\tilde\alpha_1 = \tilde\alpha_3,
\tilde\alpha_1\tilde\alpha_3 = \tilde\alpha_3\tilde\alpha_1 = \varepsilon_1^2\tilde\alpha_2,
\tilde\alpha_2\tilde\alpha_3 = \tilde\alpha_3\tilde\alpha_2 = \varepsilon_1^2\tilde\alpha_1\}
\end{align*}
Hence one can easily see that
\[
\Aut_{\mathscr{A}}(\Gamma)=\left\langle
\varepsilon_1^2, \varepsilon_2^2, \tilde\alpha_1, \tilde\alpha_2, \tilde\alpha_3\mid
R_1, R_2, R_3
\right\rangle
=\left\langle\tilde\alpha_1, \tilde\alpha_2\mid 
\tilde\alpha_1\tilde\alpha_2=\tilde\alpha_2\tilde\alpha_1
\right\rangle\cong \mathbb{Z}^2.
\]

Moreover, since the central element $*_\Gamma$ is an odd-labeled edge, we have the nontrivial subgroup $Z_\Gamma$ generated by $\mathcal{E}_{*_\Gamma}\alpha_{*_\Gamma}$, where
\[
\mathcal{E}_{*_\Gamma} = \varepsilon_1\varepsilon_2\quad\text{ and }\quad
\alpha_{*_\Gamma} = \alpha_1\alpha_2.
\]
That is, in the above group presentation, the generator for $Z_\Gamma$ corresponds to $\tilde\alpha_1\tilde\alpha_2$ and 

\begin{align*}
\Aut_{\mathscr{A}}(A_\Gamma)/Z_\Gamma&\cong
\left\langle
\varepsilon_1^2, \varepsilon_2^2, \tilde\alpha_1, \tilde\alpha_2, \tilde\alpha_3\mid
R_1, R_2, R_3, R_\Phi
\right\rangle\\
&\cong\left\langle\tilde\alpha_1, \tilde\alpha_2
\mid
\tilde\alpha_1\tilde\alpha_2=\tilde\alpha_2\tilde\alpha_1, \tilde\alpha_1\tilde\alpha_2
\right\rangle\cong \mathbb{Z},
\end{align*}
where $R_\Phi=\{\varepsilon_1\varepsilon_2\}$.
We have the following commutative diagram with exact rows:
\begin{equation}\label{equation:example exact sequence}
\begin{tikzcd}
1\arrow[r] & Z_\Gamma\arrow[d,"\cong"] \arrow[r] &
\Aut_{\mathscr{A}}(A_\Gamma)\arrow[d,"\cong"] 
\arrow[r] & \Aut_{\mathscr{A}}(A_\Gamma)/Z_\Gamma\arrow[d,"\cong"] \arrow[r] & 1\\
1\arrow[r] & \langle \tilde\alpha_1\tilde\alpha_2\rangle \arrow[r] & \langle \tilde\alpha_1, \tilde\alpha_2\rangle \arrow[r, "\tilde\alpha_2\mapsto \tilde\alpha_1^{-1}"] &
\langle \tilde\alpha_1\rangle \arrow[r] & 1
\end{tikzcd}
\end{equation}
Therefore by Theorem~\ref{theorem:outer automorphism group}, we have
\begin{align*}
\Out(A_\Gamma)\cong(\Aut_{\mathscr{A}}(A_\Gamma)/Z_\Gamma)\rtimes \mathbb{Z}_2
\cong
\langle\tilde\alpha_1, \iota\mid \iota\tilde\alpha_1\iota=\tilde\alpha_1^{-1}\rangle
\cong
\mathbb{Z}\rtimes \mathbb{Z}_2,
\end{align*}
where the last relation comes from 
\[
\tilde\alpha_1\iota=\varepsilon_1^2\iota\tilde\alpha_1
\Longleftrightarrow
\tilde\alpha_1\iota=\tilde\alpha_1^2\iota\tilde\alpha_1
\Longleftrightarrow
\iota\tilde\alpha_1\iota=\tilde\alpha_1^{-1}.
\]

Moreover, the below row in \eqref{equation:example exact sequence} splits and so we may regard $\Out(A_\Gamma)$ as a subgroup of $\Aut(A_\Gamma)$ so that $\Aut(A_\Gamma)\cong \Inn(A_\Gamma) \rtimes \Out(A_\Gamma)$.
Hence the automorphism group $\Aut(A_\Gamma)$ admits the following presentation:
\[
\Aut(A_\Gamma)\cong
\left\langle
V, \tilde\alpha_1, \iota \mid R_0,
\iota \tilde\alpha_1 \iota = \tilde\alpha_1^{-1}
\right\rangle,
\]
where
\[
R_0=\{(s,t;m(e))=(t,s;m(e))\mid e=\{s,t\}\in E\}\cup\{\tilde\alpha_1 v = (\tilde\alpha_1)_\#(v) \tilde\alpha_1, \iota v = v^{-1}\iota\}
\]
and
\[
(\tilde\alpha_1)_\#(v) =\begin{cases}
f & v=c;\\
e & v=d;\\
d & v=e;\\
c & v=f;\\
(cfc)^{-1}v(cfc) & v\in \{a,b\}.
\end{cases}
\]

\subsubsection{Discretely non-rigid with the central chunk}
The \CLTTF graph below is discretely non-rigid and the center $*_\Gamma$ in the chunk tree is a chunk $C_0$ as seen earlier.
\[
\Gamma=\begin{tikzpicture}[baseline=-.5ex]
\draw[fill] (0, -0.5) circle (2pt) node[below=2ex, left=-.5ex] {$a$};
\draw[fill] (1, -0.5) circle (2pt) node[right] {$d$};
\draw[fill] (0, 0.5) circle (2pt) node[above=2ex, left] {$i$};
\draw[fill] (1, 0.5) circle (2pt) node[right] {$e$};
\draw[fill] (0, -1.5) circle (2pt) node[below left] {$b$};
\draw[fill] (1, -1.5) circle (2pt) node[below right] {$c$};
\draw[fill](1, 0.5) ++(72:1) circle (2pt) node[right] {$f$} ++(144:1) circle(2pt) node[above] {$g$} ++(216:1) circle(2pt) node[left] {$h$};
\draw[fill] (-1.5,1.5) circle (2pt) node[above left] {$j$};
\draw[fill] (-1.5,0.5) circle (2pt) node[left] {$k$};
\draw[fill] (-1.5,-0.5) circle (2pt) node[left] {$\ell$};
\draw[fill] (-1.5,-1.5) circle (2pt) node[below left] {$m$};
\draw (0.5,0.5) node[above=-.5ex] {$e_1$} (0,0) node[right=-1ex] {$e_2$} (0.5,-0.5) node[below=-.5ex] {$e_3$};
\draw[color=black, fill=red, fill opacity=0.2](0, -1.5) rectangle node[opacity=1, below] {$C_4$} (1,-0.5);
\draw[color=black, fill=yellow, fill opacity=0.2](1, 0.5) -- ++(72:1) -- ++(144:1) -- ++(216:1) -- ++(288:1);
\draw (0.5, 1) node[opacity=1, above] {$C_1$};
\draw[color=black, fill=white, fill opacity=0.2](0, 0.5) -- ++(-1.5, 1) -- ++(-90:1) node[opacity=1, above right] {$C_2$} -- ++(1.5, -1);
\draw[color=black, fill=black, fill opacity=0.2](0, -0.5) -- ++(-1.5, -1) -- ++(90:1) node[opacity=1, below right] {$C_3$} -- ++(1.5, 1);
\draw[color=black, fill=blue, fill opacity=0.2](0, -0.5) rectangle node[opacity=1, right=-1ex] {$C_0$} (1,0.5);
\end{tikzpicture}
\]

Therefore by the corollary~\ref{corollary:automorphism},
$\Aut(A_\Gamma)\cong \Inn(A_\Gamma)\rtimes \left(\Aut_{\mathscr{A}}(A_\Gamma)\rtimes \mathbb{Z}_2\right)$.
Recall the generating set $\{\alpha_0, \dots, \alpha_4\}$ for $\Iso(\Gamma)$ and edge-twists ${\bar\varepsilon_1},\cdots,{\bar\varepsilon_4}$ as described in Example~\ref{example:generators of Iso} so that $\bar\varepsilon_i \alpha_i :\Gamma\to \Gamma$ for each $1\le i\le 4$.

The set $S$ of generators for $\Aut_{\mathscr{A}}(A_\Gamma)$ is then
\[
S=\{\tilde\alpha_0, \dots, \tilde\alpha_4, \varepsilon_1^2, \dots, \varepsilon_4^2\}
\]
and there are three types of relations
\begin{align*}
R_1 &= \{ \varepsilon_i^2 \varepsilon_j^2 = \varepsilon_j^2\varepsilon_i^2\mid 1\le i,j\le 4 \},\\
R_2 &= \{\tilde\alpha_i \varepsilon_j^2 = \varepsilon_j^2 \tilde\alpha_i \mid 1\le i, j\le 4\}
\cup
\{\tilde\alpha_0 \varepsilon_i^2 = \varepsilon_i^2 \tilde\alpha_0 \mid i=1,4\}
\cup
\{
\tilde\alpha_0 \varepsilon_1^2 = \varepsilon_2^2 \tilde\alpha_0,
\tilde\alpha_0 \varepsilon_2^2 = \varepsilon_1^2 \tilde\alpha_0
\},\\
R_3 &= \{\tilde\alpha_0^2\}\cup\{\tilde\alpha_i^2=\varepsilon_i^2\mid 1\le i\le 4\}
\cup
\{\tilde\alpha_i\tilde\alpha_j = \tilde\alpha_j\tilde\alpha_i \mid 1\le i,j\le 4\}\\
&\mathrel{\hphantom{=}}\cup
\{\tilde\alpha_0\tilde\alpha_i = \tilde\alpha_i\tilde\alpha_0 \mid i=1,4\}\cup
\{\tilde\alpha_0\tilde\alpha_2 = \tilde\alpha_3\tilde\alpha_0,
\tilde\alpha_0\tilde\alpha_3 = \tilde\alpha_2\tilde\alpha_0\}
\end{align*}
so that
\[
\Aut_{\mathscr{A}}(A_\Gamma)\cong
\langle \tilde\alpha_0, \dots, \tilde\alpha_4, \varepsilon_1^2, \dots, \varepsilon_4^2\mid 
R_1, R_2, R_3 
\rangle.
\]

Now the action of $\iota$ gives us relations in $\Out(A_\Gamma)$ as follows:
\begin{align*}
R_4&=\{\iota^2\}\cup\{\iota\tilde\alpha_0=\tilde\alpha_0\iota\}\cup
\{\tilde\alpha_i\iota=\varepsilon_i^2\iota\tilde\alpha_i\mid 1\le i\le 4\}\cup
\{\varepsilon_i\iota\varepsilon_i\iota\mid 1\le i\le 4\}
\end{align*}
and we have a group presentation 
\[
\Out(A_\Gamma)=\left\langle
\tilde\alpha_0, \dots, \tilde\alpha_4, \varepsilon_1^2, \dots, \varepsilon_4^2, \iota~\middle|~
R_1, R_2, R_3, R_4\right\rangle.
\]

One can reduce this presentation so that
\begin{align*}
\Aut_{\mathscr{A}}(A_\Gamma)&\cong
\left(
\langle \tilde\alpha_1\rangle \times
\langle \tilde\alpha_2\rangle \times
\langle \tilde\alpha_3\rangle \times
\langle \tilde\alpha_4\rangle
\right)\rtimes \langle \tilde\alpha_0\mid \tilde\alpha_0^2\rangle
\cong
\mathbb{Z}^4\rtimes \mathbb{Z}_2\\
\Out(A_\Gamma)&\cong
\left(
\langle \tilde\alpha_1\rangle \times
\langle \tilde\alpha_2\rangle \times
\langle \tilde\alpha_3\rangle \times
\langle \tilde\alpha_4\rangle
\right)
\rtimes 
\left(
\langle \tilde\alpha_0\mid \tilde\alpha_0^2\rangle\times
\langle \iota\mid \iota^2\rangle
\right)\cong
\mathbb{Z}^4\rtimes \mathbb{Z}_2^2
\end{align*}
Here $\iota$ acts on $\tilde\alpha_0$ trivially and on $\tilde\alpha_i$ for $1\le i\le 4$ as 
\[
\iota\tilde\alpha_i\iota=\tilde\alpha_i^{-1}.
\]

Finally, the group presentation for $\Aut(A_\Gamma)$ is given as
\begin{align*}
\Aut(A_\Gamma)&\cong
\left\langle
V, \tilde\alpha_0,\dots,\tilde\alpha_4, \varepsilon_1^2,\dots,\varepsilon_4^2, \iota
~\middle|~
R_0, R_1, R_2, R_3, R_4
\right\rangle\\
&\cong
A_\Gamma\rtimes\left(\mathbb{Z}^4\rtimes\mathbb{Z}_2^2\right),
\end{align*}
where
\begin{align*}
R_0&=\{(s,t;m(e))=(t,s;m(e))\mid e=\{s,t\}\in E\}\\
&\mathrel{\hphantom{=}}\cup\{\tilde\alpha_i v = (\tilde\alpha_i)_\#(v)\tilde\alpha_i\mid 0\le i\le 4, v\in V\}\\
&\mathrel{\hphantom{=}}\cup\{\varepsilon_i^2 v = (\varepsilon_i)_\#^2(v)\varepsilon_i^2\mid 1\le i\le 4, v\in V\}\\
&\mathrel{\hphantom{=}}\cup\{\iota v = v^{-1}\iota\mid v\in V\}.
\end{align*}

\begin{remark}
The above presentation can be reduced further but we omit the detail.
\end{remark}


\begin{thebibliography}{7}
\bibitem{BMcMN02}
N.~Brady, J.~McCammond, B.~M$\ddot{u}$hlherr, and W.~Neumann, {\em Rigidity of Coxeter groups and Artin groups}, Geometriae Dedicata {\bf 94} (2002) 91--109.
\bibitem{Cr05}
J.~Crisp, {\em Automorphisms and abstract commensurators of 2-dimensional Artin groups}, Geom. Topol. {\bf 9} (2005) 1381--1441.
\bibitem{Day09}
M.~B.~Day, {\em Peak reduction and finite presentations for automorphism groups of right- angled Artin groups}, Geom. Topol. {\bf 13} (2009) 817--855.
\bibitem{Day14}
M.~B.~Day, {\em Full-featured peak reduction in right-angled Artin groups}, Algebr. Geom. Topol. {\bf 14} (2014) 1677--1743.
\bibitem{Droms87}
C.~Droms, {\em Isomorphisms of graph groups}, P. Am. Math. Soc. {\bf 100}  (1987) 407--408.
\bibitem{Godelle03}
E.~Godelle, {\em Parabolic subgroups of Artin groups of type FC}, Pac. J. Math. {\bf 208} (2003) 243--254.
\bibitem{Godelle07}
E.~Godelle, {\em Artin-Tits groups with CAT(0) Deligne complex}, J. Pure Appl. Algebra {\bf 208} (2007) 39--52.
\end{thebibliography}
\end{document}